\renewcommand{\d}{\mathrm d}
\newcommand{\ext}{\mathrm{ext}}
\newcommand{\R}{\mathbb R}
\newcommand{\Z}{\mathbb Z}
\newcommand{\wt}{\widetilde}
\newcommand{\wh}{\widehat}
\newcommand{\ol}{\overline}
\newcommand{\ul}{\underline}
\renewcommand{\Re}{\operatorname{Re}}
\renewcommand{\Im}{\operatorname{Im}}
\newcommand{\Ai}{\operatorname{Ai}}
\newcommand{\Var}{\operatorname{Var}}
\newcommand{\id}{\mathbbm{1}}
\renewcommand{\O}{\mathcal{O}}
\renewcommand{\P}{\mathbf P}
\newcommand{\E}{\mathbf E}
\newtheorem{proposition}{Proposition}[section]
\newtheorem{theorem}[proposition]{Theorem}
\newtheorem{lemma}[proposition]{Lemma}
\theoremstyle{definition}
\newtheorem{remark}[proposition]{Remark}
\numberwithin{equation}{section}
\author{Patrik L.\ Ferrari\thanks{Institute for Applied Mathematics, Bonn University, Endenicher Allee 60, 53115 Bonn,
Germany. E-mail: {\tt ferrari@uni-bonn.de}} \and
B\'alint Vet\H o\thanks{Department of Stochastics, Budapest University of Technology and Economics;
MTA\,--\,BME Stochastics Research Group, Egry J.\ u.\ 1, 1111 Budapest, Hungary. E-mail: {\tt vetob@math.bme.hu}}}
\title{Fluctuations of the arctic curve in the tilings of the Aztec diamond on restricted domains}
\date{}
\begin{document}

\maketitle
\begin{abstract}
We consider uniform random domino tilings of the restricted Aztec diamond which is obtained by cutting off an upper triangular part of the Aztec diamond by a horizontal line.
The restriction line asymptotically touches the arctic circle that is the limit shape of the north polar region in the unrestricted model.
We prove that the rescaled boundary of the north polar region in the restricted domain converges to the Airy$_2$ process conditioned to stay below a parabola
with explicit continuous statistics and the finite dimensional distribution kernels.
The limit is the hard-edge tacnode process which was first discovered in the framework of non-intersecting Brownian bridges.
The proof relies on a random walk representation of the correlation kernel of the non-intersecting line ensemble which corresponds to a orandom tiling.

Key words and phrases: random tiling, Aztec diamond, Airy process, hard-edge tacnode process
\end{abstract}
\sloppy

\section{Introduction and main results}
The Aztec diamond is one of the best studied random tiling models.
It has been introduced in~\cite{EKLP92} and has been analyzed in great detail since then using different techniques.
A disordered region is located in the center of the Aztec diamond and there are four ordered ones at the corners where the tiling follows a completely regular pattern.
\begin{figure}
\begin{center}
\vspace{-1.5cm}\includegraphics[height=6cm,angle=-45]{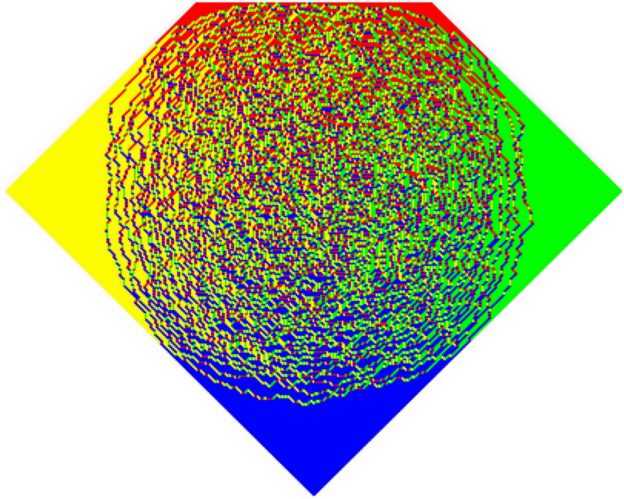}
\caption{One random realization of the Aztec diamond of size $n=100$ with restricted domain with parameter $R=2$ in \eqref{CutThreshold}. Code courtesy of Sunil Chhita.}
\label{FigAztec}
\end{center}
\end{figure}
The law of large number for the boundary of the disordered region, also known as arctic circle theorem, was shown in~\cite{JPS98},
while the limiting density of dominoes with a given orientation in the disordered region was obtained in~\cite{CEP96,CKP01}.
Using the inverse Kasteleyn matrix approach~\cite{Kas63}, one can analyze the analogue of the arctic circle also for more general domains and tiling models~\cite{KOS03}
as well as local statistics of the local field~\cite{Ken97,Ken01,Hel00,CJY15}.
The Aztec diamond model is known to be equivalent to the six-vertex model with domain wall boundary conditions at the free fermion point~\cite{ZJ00,AR05,FS06,BF05,CPS16b}.
Furthermore, the Aztec diamond can be generated by the shuffling algorithm~\cite{EKLP92} and a Markov chain of a system of interlaced particles system~\cite{Nor08,BF15}
and the boundary of the north polar region evolves as the discrete time TASEP with step initial condition.
This property was used to obtain the limit shape~\cite{JPS98} and it provides the link to the KPZ universality class of growth models.

The boundary of the disordered region can be studied by using a non-intersecting line ensemble,
for which the top line is exactly at the border of the disordered region, see Figure~\ref{FigLines}.
By the Lindstr\"om--Gessel--Viennot method, a discrete version of the Karlin--McGregor formula~\cite{KM59}, the lines form determinantal point process (see the book chapter~\cite{Bor10}).
In particular, the joint distributions of the top line at different times are given by a Fredholm determinant.
Using this technique~\cite{Jo03b}, it was shown in~\cite{Jo03} that the top line converges to the Airy$_2$ process~\cite{PS02}.
The line ensemble for the Aztec diamond fits into the class of Schur processes~\cite{Ok01} and it can be dynamically generated~\cite{BF15} as a consequence of the shuffling algorithm~\cite{EKLP92}.

In this paper we consider uniform tilings of the Aztec diamond in a restricted domain, see Figure~\ref{FigAztec}, which can be generated by a generalized shuffling algorithm~\cite{Pro03}.
More precisely, we cut off the top part of the Aztec diamond at a level which is in the natural fluctuation scale of the top line.
Equivalently, we can think of conditioning the random tiling to be ordered above the line of restriction, or
in terms of the corresponding non-intersecting line ensemble, it is equivalent for the lines to stay below a fixed threshold.
Our main result is the convergence of the top line to the so-called hard-edge tacnode process $\cal T$, which has been identified as the limit of non-intersecting Brownian bridges in~\cite{FV16}.

The hard-edge tacnode process in the context of non-intersecting lines was first described in~\cite{D13}:
non-intersecting squared Bessel processes were investigated and the one-point marginal distributions of their scaling limit at the hard-edge tacnode
were identified in terms of the solution of a $4\times4$ Riemann--Hilbert problem.
Fredholm determinant formulas with explicit kernels were obtained later in~\cite{DV14} for the multi-time distribution of the same process provided that the dimension of the Bessel paths is an even integer.
It does not include the case of non-intersecting Brownian bridges which were studied in~\cite{FV16}.
Shortly afterwards, the finite dimensional distributions of non-intersecting Brownian bridges in the limit close to the hard-edge tacnode
were described in~\cite{LW17} in a different formulation involving special functions related to the Painlev\'e II equation.
In addition, in was proved in~\cite{LW17} that the hard-edge tacnode kernel of~\cite{FV16} is the odd part of the soft-edge tacnode kernel of~\cite{FV11}.

As a byproduct along the proof of our main results in this paper,
we derive a Fredholm determinant formula in Theorem~\ref{thm:ReformulationKernel} for the continuum statistics of the Airy$_2$ process ${\cal A}_2$
in terms of the hitting time and position of Brownian motion in the spirit of~\cite{QR19}.
Analogous formulas coming from a different approach can be found in the KPZ fixed point paper~\cite{MQR17} (see Propositions~3.6, 3.8 and 4.4 therein).

\subsubsection*{The Aztec diamond model}
We follow the notations of~\cite{Jo03}.
The Aztec diamond is a domain $A_n$ in $\R^2$ that consists of the union of squares of the form $[k,k+1]\times[l,l+1]$ which lie inside $\{|x|+|y|\le n+1\}$.
In the original problem, one of all possible tilings of $A_n$ by vertical or horizontal $2\times1$ dominos is chosen uniformly at random.

Let us introduce a coloring of the squares in the Aztec diamond in a checkerboard fashion in a way that in the top half of $A_n$, the leftmost square of each row is white.
We call a horizontal domino a north domino if its leftmost square is white, otherwise call it a south domino.
Similarly, a vertical domino is a west domino if its upper square is white and it is an east domino otherwise.
The north polar region is the connected component of all north dominoes adjacent to the boundary of $A_n$.
Similarly, south, west and east polar regions can be defined.

In order to study the fluctuations of the boundary of the north polar region around its asymptotic shape, in~\cite{Jo02b}
each tiling configuration of the Aztec diamond was mapped into a system of non-intersecting lines as follows, see Figure~\ref{FigLines}.
On each south domino which has corners at $(0,0)$ and at $(2,1)$, a line is drawn from $(0,1/2)$ to $(2,1/2)$, on north dominoes no lines are drawn.
On each west domino which has corners at $(0,0)$ and at $(1,2)$, a line from $(0,1/2)$ to $(1,3/2)$ is drawn, similarly a line from $(0,3/2)$ to $(1,1/2)$ is drawn on each east domino.
Let $X_n(t)$ denote the top curve of the line ensemble from $(-n,-1/2)$ to $(n,-1/2)$, which follows the boundary of the north polar region.
\begin{figure}
\begin{center}
\vspace{-1.5cm}\includegraphics[height=6cm,angle=-45]{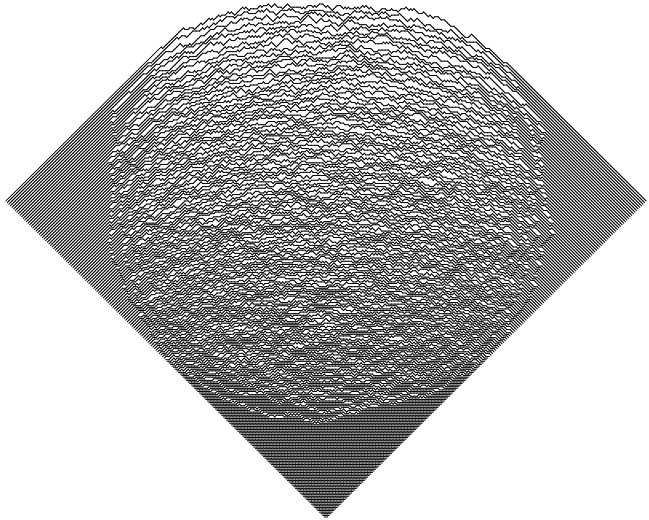}
\caption{Lines associated with the Aztec diamond of Figure~\ref{FigAztec}. Code courtesy of Sunil Chhita.}
\label{FigLines}
\end{center}
\end{figure}

\subsubsection*{The main results}
In~\cite{Jo03} it is shown that the boundary of the north polar region of the Aztec diamond has Airy$_2$ fluctuations on the $n^{1/3}$ scale with respect to the limit shape.
Thus, to obtain a non-trivial interaction on the fluctuations scale with the limiting Airy$_2$ process on the boundary of the north polar region,
we consider a uniform tiling of the Aztec diamond $A_n$ restricted to $y\le r$ where the horizontal line $y=r$ is set to be on the $n^{1/3}$ scale around the top of the limit shape.
We prove that the boundary of the north polar region in the restricted model converges to the hard-edge tacnode process in terms of continuum statistics and finite dimensional distributions,
see Theorems~\ref{thm:Xbelowh} and~\ref{thm:Xfindimdistr}.

The hard-edge tacnode process was described first in~\cite{FV16} as the $n\to\infty$ limit of $n$ non-intersecting Brownian bridges conditioned to stay below a fixed threshold.
The limiting correlation kernel of the restricted Brownian bridges as well as the probability that it lies below a given function has been obtained in~\cite{FV16}.
The hard-edge tacnode $\cal T$ can also be described as the $u\mapsto {\cal A}_2(u)-u^2$ process conditioned to stay below a given threshold, thus it is a one-parameter family process.

In this paper, we cut off a triangle at the top corner of the Aztec diamond at height
\begin{equation}\label{CutThreshold}
r=n/\sqrt{2}+2^{-5/6}Rn^{1/3}
\end{equation}
for a given fixed $R\in\R$. Denote by $X_n^R$ the top line of the corresponding non-intersecting line ensemble in this case and introduce its rescaled position as
\begin{equation}\label{defXnRresc}
X_n^{R,{\rm resc}}(t)=\frac{X_n^R(2^{-1/6}tn^{2/3})-n/\sqrt{2}}{2^{-5/6}n^{1/3}}.
\end{equation}
Since the height of the threshold is scaled as the top line in the unrestricted model, the conditioning is relevant in the limit. To state the results, we first introduce the limiting kernel.

For any parameter $s\in\R$, let
\begin{equation}\label{eqAiryS}
\Ai^{(s)}(x)=e^{2s^3/3+xs}\Ai(s^2+x).
\end{equation}
Then we define the functions
\begin{equation}\label{defPhihat}\begin{aligned}
\Phi_t^\xi(u)&=\Ai^{(t)}(R+\xi+u)-\Ai^{(t)}(R+\xi-u),\\
\Psi_t^\zeta(u)&=\Ai^{(-t)}(R+\zeta+u)-\Ai^{(-t)}(R+\zeta-u),
\end{aligned}\end{equation}
and the shifted GOE kernel
\begin{equation}
K_0(\xi,\zeta)=2^{-1/3}\Ai(2^{-1/3}(2R+\xi+\zeta)).\label{defK0hat}
\end{equation}
For a given function $g:\R\to\R$, let us define the following transition density
\begin{equation}\label{defTg}
T^g_{t_1,t_2}(\xi,\zeta)=\frac\partial{\partial\zeta}\P_{b(t_1)=\xi}(b(t)\leq g(t)-t^2,t\in[t_1,t_2],b(t_2)\le\zeta)
\end{equation}
where $b(t)$ is a Brownian motion with diffusion coefficient $2$.
Then the limiting kernel is given by
\begin{equation}\label{defKext}
K^\ext(t_1,u_1;t_2,u_2)=-\id_{t_1<t_2}T^0_{t_1,t_2}(u_1,u_2)+\int_{\R_+}\d\xi\int_{\R_+}\d\zeta\,\Psi_{t_1}^\xi(u_1)(\id-K_0)^{-1}(\xi,\zeta)\Phi_{t_2}^\zeta(u_2)
\end{equation}
where $t_1,t_2\in\R$ and $u_1,u_2\le0$.
We remark that above and in the rest of the paper we use the same notation for an integral operator and for its kernel.

Let us state the main results of the paper about the limiting distribution of the rescaled top line in a random tiling of the Aztec diamond on a restricted domain.

\begin{theorem}\label{thm:Xbelowh}
For $t_1<t_2$, let $g:\R\to\R$ be a function with $g(t)=R+t^2$ for $t\not\in [t_1,t_2]$ and $g(t)\le R+t^2$ on $[t_1,t_2]$.
Suppose that $g$ is differentiable on $[t_1,t_2]$ except for countably many points where it may be discontinuous
and assume that its derivative is square integrable on the intervals on which $g$ is differentiable.
Then
\begin{equation}\label{Xbelowh}\begin{aligned}
\lim_{n\to\infty} \P(X_n^{R,{\rm resc}}(t)\le g(t)-t^2, t\in[t_1,t_2])& = \frac{\P({\cal A}_2(t)\le g(t)\textrm{ for all }t\in\R)}{\P({\cal A}_2(t)\leq R+t^2\textrm{ for all }t\in\R)}\\
&=\det(\id-K_{t_1,t_1}+T^{g-R}_{t_1,t_2}K_{t_2,t_1})_{L^2(\R_-)}
\end{aligned}\end{equation}
where $K_{t_2,t_1}(u,v)=K^\ext(t_2,u,t_1,v)$ given in \eqref{defKext} and the right-hand side above is the same as the right-hand side of (2.41) in~\cite{FV16}.
\end{theorem}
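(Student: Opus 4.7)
The plan is to prove the two equalities in sequence: the first is a convergence statement, the second is an algebraic identity reducing to the continuum statistics formula of~\cite{FV16}.

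\emph{First equality.} Because the restricted tiling is by definition the uniform unrestricted tiling conditioned on the top line staying below the cut at height $r$ of \eqref{CutThreshold}, the rescaling \eqref{defXnRresc} gives
\begin{equation*}
\P\bigl(X_n^{R,{\rm resc}}(t)\le g(t)-t^2,\,t\in[t_1,t_2]\bigr)=\frac{\P\bigl(\wt X_n^{{\rm resc}}(t)\le g(t)-t^2,\,t\in\R\bigr)}{\P\bigl(\wt X_n^{{\rm resc}}(t)\le R,\,t\in\R\bigr)},
\end{equation*}
where $\wt X_n^{{\rm resc}}$ is the rescaling~\eqref{defXnRresc} applied to the unrestricted top line; the assumptions $g(t)\le R+t^2$ on $[t_1,t_2]$ and $g(t)=R+t^2$ off this interval are exactly what fuse the two constraints into a single inequality on all of $\R$. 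I would then express numerator and denominator as Fredholm determinants via the Schur-process (Lindstr\"om--Gessel--Viennot) representation of the line ensemble used in~\cite{Jo03,BF15}, and pass to $n\to\infty$ by establishing trace-class convergence of the rescaled prelimit kernel to the Airy kernel. The limit yields the ratio of Airy$_2$ probabilities on the right-hand side of \eqref{Xbelowh}.

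\emph{Second equality.} The Airy$_2$ ratio is, by elementary conditional probability, $\P({\cal A}_2(t)\le g(t),\,t\in[t_1,t_2]\mid{\cal A}_2(s)\le R+s^2,\,s\in\R)$, which is by the description of~\cite{FV16} the probability that the hard-edge tacnode process stays below $g-R-t^2$ on $[t_1,t_2]$. Since $K^{\ext}$ from \eqref{defKext} is the extended correlation kernel of this process, the identity is exactly the content of (2.41) of~\cite{FV16} in the present notation. Alternatively, starting from Theorem~\ref{thm:ReformulationKernel} applied in turn to $h=g$ and to $h(t)=R+t^2$, one obtains explicit Fredholm determinants for the two Airy$_2$ probabilities, and the ratio collapses to the claimed $L^2(\R_-)$ determinant through a resolvent manipulation based on $(\id-K_0)^{-1}$ together with the kernels $\Phi_t^\xi,\Psi_t^\zeta$ of \eqref{defPhihat}.

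\emph{Main difficulty.} The technically demanding point is the $n\to\infty$ limit of the prelimit Fredholm determinants: because the condition $\wt X_n^{{\rm resc}}(t)\le R$ is imposed for all $t\in\R$, the operators act on an unbounded time domain. Justifying the limit therefore requires uniform-in-$n$ upper-tail estimates for $\wt X_n^{{\rm resc}}(t)+t^2$ as $|t|\to\infty$ that parallel the parabolic decay of the Airy$_2$ process above $R+t^2$. I expect these to come from exponential single-time tail bounds for the Aztec top curve read off from its Fredholm determinant, combined with standard moderate-deviation estimates for the Airy kernel; once the time domain can be truncated to a compact interval, the remaining trace-class convergence and the algebraic reduction of the final paragraph go through in routine fashion.
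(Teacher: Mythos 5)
Your skeleton matches the paper's: the restricted model is the unrestricted one conditioned on the top line staying below the cut, so the left-hand side of \eqref{Xbelowh} is a ratio of two whole-line ``stay below a curve'' probabilities (the fusion of the two constraints into one inequality on all of $\R$ is correct, since $g=R+t^2$ off $[t_1,t_2]$ and $g\le R+t^2$ on it), and your second route for the second equality --- Theorem~\ref{thm:ReformulationKernel} with the choice $\alpha=t_1$ applied to $g$ and to $R+t^2$, followed by a resolvent/cyclicity manipulation with $\Phi_{t_1},\Psi_{t_2}$ and $(\id-K_0)^{-1}$ --- is exactly what the paper does in Section~\ref{s:identification} via Proposition~\ref{prop:Aibelowh}, the kernel difference \eqref{kerneldiff} and the compatibility relations of Proposition~\ref{prop:compatibility}. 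Your alternative of simply invoking (2.41) of~\cite{FV16} is weaker than you suggest: it additionally presupposes the identification of the hard-edge tacnode process with the Airy$_2$ process conditioned to stay below the parabolic threshold, which is precisely why the paper rederives the identity rather than citing it.

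The genuine gap is the first equality, which is the content of Theorem~\ref{thm:YbelowacurveR} and is the bulk of the paper (Sections~\ref{s:rw}--\ref{s:extension}); the paper's proof of Theorem~\ref{thm:Xbelowh} takes it as input, whereas your sketch of it would not go through as written. First, ``trace-class convergence of the rescaled prelimit kernel to the Airy kernel'' is not the relevant statement: even on a compact time window the event constrains the curve at order $n^{2/3}$ lattice times, so one needs the continuum-statistics/path-integral formulation, i.e.\ convergence of the hitting-time kernels $K_n^{L,M}$ of \eqref{defKnLM} (built from the random walk representation of Propositions~\ref{prop:rwconstruction} and~\ref{prop:Yrw}) to the Brownian hitting kernel $K^{L,M}$ of \eqref{defKLM}, which rests on the steepest-descent estimates and the weak convergence of hitting times in Propositions~\ref{prop:pqconv}--\ref{prop:Hconv}, not on convergence to $K_{\Ai}$. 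Second, your truncation of the unbounded time axis from ``exponential single-time tail bounds'' does not yield bounds uniform in $n$: a union bound over the $\mathcal{O}(n^{2/3})$ lattice times per unit of rescaled time introduces polynomially diverging factors, and crude increment bounds on $Y_n$ do not reduce the supremum over a block to a single time; what is needed is a supremum/process-level estimate, which the paper obtains from the uniform exponential decay in the time variable of the kernel (Lemma~\ref{lemma:intkerneldecay} and Proposition~\ref{prop:hittingfar}), the alternative being genuine process-level comparisons via the TASEP correspondence in the spirit of~\cite{CLW16,CFS16}, a route the paper mentions in Section~\ref{s:leftepa} and deliberately avoids. If Theorem~\ref{thm:YbelowacurveR} is granted, the remainder of your argument coincides with the paper's proof.
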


\begin{theorem}\label{thm:Xfindimdistr}
For arbitrary $t_1,\dots,t_k$ and $u_1,\dots,u_k\le R$, we have
\begin{equation}\label{Xfindimdistr}
\lim_{n\to\infty} \P\left(\bigcap_{\ell=1}^k \{X_n^{R,{\rm resc}}(t_\ell)\le u_\ell\}\right) = \det\left(\id-K^\ext\right)_{L^2(E)}
\end{equation}
with the set $E=\{(t_1,[u_1-R,0])\times\dots\times (t_k,[u_k-R,0])\}$.
\end{theorem}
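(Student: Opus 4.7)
The plan is to express the prelimit finite-dimensional probability as a Fredholm determinant and to show convergence of the prelimit kernel to $K^\ext$ in the scaling limit. First, using the non-intersecting line ensemble associated with the Aztec diamond tiling from~\cite{Jo03}, the restriction $y\le r$ corresponds to conditioning every line to stay below the horizontal cutoff at height $r$. By the Karlin--McGregor/Lindstr\"om--Gessel--Viennot formula with a wall, the conditioned ensemble remains a determinantal point process with a correlation kernel $K_n^R$, and the standard gap probability formula yields
\begin{equation*}
\P\left(\bigcap_{\ell=1}^k\{X_n^R(t_\ell)\le u_\ell^n\}\right)=\det(\id-K_n^R)_{\ell^2(E_n)},
\end{equation*}
where $u_\ell^n$ is the integer approximation of $n/\sqrt{2}+2^{-5/6}u_\ell n^{1/3}$ and $E_n=\bigcup_\ell\{t_\ell^n\}\times\{u_\ell^n+1,\dots,r\}$.

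The key structural step is to obtain a tractable formula for $K_n^R$. Starting from the unrestricted kernel $K_n$ known explicitly from~\cite{Jo03} and using the random walk representation of the correlation kernel advertised in the abstract, the wall at height $r$ is handled via an operator-inversion identity: the contribution of trajectories that would exceed $r$ is subtracted by a resolvent living on the cutoff line, which is the prelimit analogue of $(\id-K_0)^{-1}$. The reflection produced by the wall is the source of the differences of Airy-type functions in $\Phi_t^\zeta$ and $\Psi_t^\xi$ in~\eqref{defPhihat}, while the direct piece $-\id_{t_1<t_2}T^0_{t_1,t_2}(u_1,u_2)$ arises from the conditional propagator of the top line between $t_1$ and $t_2$, which converges under diffusive scaling to a Brownian motion with diffusion coefficient $2$ conditioned to stay below $-t^2$.

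Next, I would carry out the asymptotic analysis under~\eqref{defXnRresc}. Steepest descent applied to the explicit double contour integral representation of $K_n$, as in~\cite{Jo03}, produces the extended Airy kernel limit for the unrestricted part and identifies the prelimit analogues of $\Phi_t^\zeta$, $\Psi_t^\xi$ and the discrete GOE-type kernel on the cutoff line as converging to the continuum objects in~\eqref{defPhihat} and~\eqref{defK0hat}. Convergence of the resolvent $(\id-M_n)^{-1}\to(\id-K_0)^{-1}$ then follows from pointwise convergence $M_n\to K_0$ combined with a uniform bound on the prelimit operator norm. Under the shift $\xi=u_1-R$, $\zeta=u_2-R$, the rescaled integration set $E=\prod_\ell(t_\ell,[u_\ell-R,0])$ naturally captures the cutoff at height $R$ becoming $0$ in the shifted coordinates, and assembling all pieces gives $K_n^R\to K^\ext$ pointwise on the appropriate rescaled set.

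The main obstacle is upgrading pointwise kernel convergence to trace-norm convergence, hence to convergence of the Fredholm determinants. This requires (i) uniform exponential tail bounds $|K_n^R(t_\ell,u;t_{\ell'},v)|\le Ce^{-c(|u|+|v|)}$ in the rescaled variables, analogous to the standard bounds for the extended Airy kernel, and (ii) uniform control on the prelimit resolvent, namely that the operator norm of $M_n$ stays bounded away from $1$ as $n\to\infty$. The resolvent estimate is the delicate part, since one must compare the discrete inverse $(\id-M_n)^{-1}$ with its continuum analogue uniformly on $\R_+$, while simultaneously controlling the Airy-type factors in the Neumann expansion. Once these estimates are in place, the trace-class convergence on the rescaled set $E$ and convergence of the corresponding Fredholm determinants follow from standard perturbation arguments.
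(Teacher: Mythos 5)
There is a genuine gap at the very first structural step, and the rest of your plan inherits it. You assert that the restricted (conditioned) ensemble is determinantal with an \emph{explicit} kernel $K_n^R$ obtained from the unrestricted kernel of~\cite{Jo03} by ``subtracting the contribution of trajectories exceeding $r$ via a resolvent living on the cutoff line'', with the wall producing reflection terms. For this model no such formula is available, and none is derived in your proposal. The Lindstr\"om--Gessel--Viennot/Eynard--Mehta machinery with a wall does give \emph{some} determinantal structure, but only after solving a biorthogonalization problem with respect to the transition kernel killed at the wall; for the Aztec diamond walk (a Bernoulli up-step composed with a geometric down-step) there is no exact reflection principle, the killed kernel has no simple closed form, and the finite-$n$ analogue of the structure $\Psi(\id-K_0)^{-1}\Phi$ in \eqref{defKext} is not known. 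The reflection/resolvent structure you invoke is a feature of the \emph{limit} (it is essentially the content of the hard-edge tacnode kernel of~\cite{FV16}, where the Gaussian transition density makes the finite-$n$ reflection argument possible), not something you can take for granted at finite $n$ here. Consequently the subsequent steepest-descent analysis, the convergence $(\id-M_n)^{-1}\to(\id-K_0)^{-1}$, and the trace-norm upgrade all rest on an object whose existence and form have not been established. A smaller but real additional issue: the passage from $X_n$ to the line ensemble involves the coordinate change $(t,x)\mapsto(t+x+n,x)$, so the gap event for $X_n^{R,\rm resc}$ is not literally a gap event at fixed times of the line ensemble; the $x$-dependent time shift must be shown to be negligible (the paper addresses this), and your proposal passes over it.

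For comparison, the paper deliberately avoids ever writing a conditioned finite-$n$ kernel. It interprets the restricted-model probabilities as conditional probabilities in the \emph{unrestricted} ensemble, proves the continuum-statistics limit $\P(Y_n(b_n(\tau))\le g_n(\tau),\ \tau\in\R)\to\P(\mathcal A_2(\tau)\le g(\tau),\ \tau\in\R)$ (Theorem~\ref{thm:YbelowacurveR}) via the random-walk/hitting-time representation, identifies the limiting ratio of Airy$_2$ probabilities with the hard-edge tacnode Fredholm determinant (Proposition~\ref{prop:Aibelowh} and Theorem~\ref{thm:Xbelowh}), and then obtains Theorem~\ref{thm:Xfindimdistr} from Theorem~\ref{thm:Xbelowh} by choosing $g$ equal to $u_\ell+t^2$ at the points $t_\ell$ and $R+t^2$ elsewhere, writing $T^{g-R}_{t_1,t_k}=\ol P_{u_1-R}T^0_{t_1,t_2}\cdots T^0_{t_{k-1},t_k}\ol P_{u_k-R}$, and converting the path-integral determinant into $\det(\id-K^\ext)_{L^2(E)}$ via Theorem~3.3 of~\cite{BCR13} together with the compatibility relations of Proposition~\ref{prop:compatibility}. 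If you want to salvage your route, you would need to actually construct the conditioned kernel (a substantial biorthogonalization problem with a wall) or else reduce to the conditional-probability formulation as the paper does.
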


\subsubsection*{The method}
To obtain our results, we consider the probabilities on the left-hand side \eqref{Xbelowh} and \eqref{Xfindimdistr} as conditional probabilities.
Then we determine the limit of the probability that $X_n(t)$ remains below a given function (see Theorem~\ref{thm:YbelowacurveR})
for which the probability of the condition is just the function $g(\tau)=R+\tau^2$.

The first step is to map the boundary curve of the north polar region for the Aztec diamond $X_n(t)$
into the top curve of the non-intersecting lines $Y_n(t)$ from $Y_n(0)=0$ to $Y_n(2n)=0$ following~\cite{Jo03}.
The possible steps of the lines in the ensemble alternate for odd and even steps.
The lines can stay or increase by one in odd steps, in particular $Y_n(2j+1)$ can be equal to either $Y_n(2j)$ or $Y_n(2j)+1$ for $j=0,1,\dots,n-1$.
The lines can stay or decrease by any positive integer in even steps, in particular $Y_n(2j+2)\le Y_n(2j+1)$ for $j=0,1,\dots,n-1$.
This line ensemble representation forms a Schur process.

The bijection which maps the line $X_n$ into $Y_n$ is described in~\cite{Jo03} in detail
and it has the property that if $(t,x-1/2)$ is a point on the curve $X_n$ where $t\in[-n,n]$ and $x\in[0,n]$ are integers, then the point $(t+x+n,x)$ is on the curve $Y_n$.
By neglecting the integer parts we could conclude that the event $\{X_n(t)\le x\}$ and $\{Y_n(t+x+n)\le x\}$ are equal.
As suggested by the rescaling \eqref{defXnRresc}, we choose $t=2^{-1/6}\tau n^{2/3}$ and $x=n/\sqrt2+2^{-5/6}\xi n^{1/3}$ as parameters of the events above.
We introduce the time scaling and the rescaled version of any function $g:\R\to\R$ as
\begin{equation}\label{defbtaugn}
b_n(\tau)=n\left(1+\frac1{\sqrt2}\right)+2^{-1/6}\tau n^{2/3},\qquad g_n(\tau)=\frac n{\sqrt 2}+2^{-5/6}(g(\tau)-\tau^2)n^{1/3}.
\end{equation}
Considering the event $\{Y_n(t+x+n)\le x\}$, the argument of the process $Y_n$ is equal to $t+x+n=b_n(\tau)+2^{-5/6}\xi n^{1/3}$ by the scaling above.
The $2^{-5/6}\xi n^{1/3}$ is negligible in the $n\to\infty$ limit compared to $b_n(\tau)$.
For this reason we consider events of the form $\{Y_n(b_n(\tau))\le g_n(\tau), \tau\in[t_1,t_2]\}$ which is asymptotically the same as the probability on the left-hand side of \eqref{Xbelowh}.
In particular, the condition that $X_n$ stays below a given threshold maps to the condition that $Y_n$ stays below the same threshold.

Theorem~\ref{thm:YbelowacurveR} about the convergence of probabilities above
carries the most important input for the proof of Theorems~\ref{thm:Xbelowh} and~\ref{thm:Xfindimdistr} on the top curve of the original ensemble.
In order to state it, let $H^1_\ext(\R)$ be the class of functions $g$ for which there is a finite or infinite collection of intervals $[a,b]$ with $a\le b$
so that $g$ is differentiable on $[a,b]$ with derivative in $L^2([a,b])$ and $g=\infty$ outside this collection of intervals.

\begin{theorem}\label{thm:YbelowacurveR}
Let $g\in H^1_\ext(\R)$.
Then
\begin{equation}\label{YbelowacurveR}
\P\left(Y_n(b_n(\tau))\le g_n(\tau)\mbox{ for }\tau\in\R\right)\to\P\left(\mathcal A_2(\tau)\le g(\tau)\mbox{ for }\tau\in\R\right)
\end{equation}
as $n\to\infty$ where $g_n$ is given by \eqref{defbtaugn} and $\mathcal A_2$ is the Airy$_2$ process.
\end{theorem}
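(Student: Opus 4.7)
}

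The strategy is to express both sides of \eqref{YbelowacurveR} as Fredholm determinants involving, respectively, a discrete random walk and a Brownian motion, and then pass to the limit via kernel convergence. The starting point is that the line ensemble underlying $Y_n$ is a Schur process, so at any finite collection of times its joint law is determinantal with an explicit correlation kernel $K_n$ of Johansson type. For a finite collection of times $\tau_1<\dots<\tau_k$, the event $\bigcap_\ell\{Y_n(b_n(\tau_\ell))\le g_n(\tau_\ell)\}$ therefore has probability $\det(\id-\bar\chi K_n\bar\chi)$ on a discrete state space, and under the scaling \eqref{defbtaugn} the kernel converges to the extended Airy$_2$ kernel, giving the corresponding Airy$_2$ identity at finitely many points. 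The heart of the proof is to upgrade this to the continuum-in-$\tau$ event.

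The main step is a \emph{random walk reformulation} on the prelimit side that parallels Theorem~\ref{thm:ReformulationKernel}. Between two times $b_n(\tau_1)$ and $b_n(\tau_2)$, the top line $Y_n$ obeys a Markov-like dynamics (increments of $0$ or $1$ at odd times, arbitrary non-positive jumps at even times) and the event that $Y_n(s)\le g_n(\tau(s))$ for all intermediate $s$ is encoded by replacing the free transition kernel of $Y_n$ between $\tau_1$ and $\tau_2$ by its restriction to paths staying below $g_n$. The resulting identity has the shape
\begin{equation*}
\P(Y_n(b_n(\tau))\le g_n(\tau)\text{ for }\tau\in[t_1,t_2])=\det\bigl(\id-K_n^{(t_1)}+S_n^{g_n,t_1,t_2}K_n^{(t_2,t_1)}\bigr)
\end{equation*}
on an appropriate $\ell^2$ space, where $S_n^{g_n,t_1,t_2}$ is the transition density of the relevant (rescaled) random walk conditioned to stay below $g_n$. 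This identity is what makes a single Fredholm determinant carry the continuum barrier information, and it only uses Markov property plus the Eynard--Mehta / Borodin--Rains argument.

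Once this representation is in place, the convergence follows from two ingredients. First, a Donsker-type invariance principle: the rescaled random walk whose barrier density is $S_n^{g_n,t_1,t_2}$ converges to Brownian motion with diffusion coefficient $2$ conditioned to stay below $g(t)-t^2$, whose transition density is precisely $T^g_{t_1,t_2}$ from \eqref{defTg}; the identification of the diffusion coefficient comes from the variance of a single odd--even step pair being of order $1$ and the time scale $n^{2/3}$ in \eqref{defbtaugn}. Second, pointwise convergence of $K_n$ to the extended Airy$_2$ kernel, plus uniform-in-$n$ super-exponential Airy-type tail bounds on $K_n$ (known from~\cite{Jo03,FV11,FV16}), which upgrade pointwise kernel convergence to trace-norm convergence of the operator on the relevant weighted $L^2$ space, and thus convergence of the Fredholm determinants to $\det(\id-K_\mathrm{Ai}^{(t_1)}+T^g_{t_1,t_2}K_\mathrm{Ai}^{(t_2,t_1)})$. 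By Theorem~\ref{thm:ReformulationKernel} this limit equals $\P({\cal A}_2(\tau)\le g(\tau)$ for all $\tau\in[t_1,t_2])$. A routine monotone approximation in $g$ (taking an increasing sequence of barriers that agree with $g$ on an exhausting family of subintervals and are $+\infty$ elsewhere) extends the result to arbitrary $g\in H^1_\ext(\R)$, using monotonicity of both sides in $g$.

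The principal obstacle is the trace-norm control of $S_n^{g_n,t_1,t_2}K_n^{(t_2,t_1)}$. Pointwise convergence of $S_n^{g_n,t_1,t_2}$ to $T^g_{t_1,t_2}$ is essentially Donsker with a barrier, but to conclude Fredholm convergence one needs \emph{uniform} Gaussian-type bounds on the barrier transition density and matching Airy tails on $K_n$, so that a dominated-convergence argument applies on the unbounded half-line $\R_-$. Handling the possibly countably many discontinuities of $g$, together with the $H^1$ (rather than smooth) regularity of $g$ on its intervals of finiteness, requires a careful approximation argument: one first proves the statement for $g$ smooth and compactly constrained, then uses the $L^2$ bound on $g'$ to control the Cameron--Martin-type tilt that relates the barrier density for $g$ to that for a smooth approximant.
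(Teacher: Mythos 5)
Your plan covers the finite-interval statement, but the step you dismiss as ``a routine monotone approximation'' is where the real difficulty of Theorem~\ref{thm:YbelowacurveR} sits, and monotonicity alone does not close it. Monotonicity of the barrier event gives only the upper-bound direction, $\limsup_n\P(Y_n(b_n(\tau))\le g_n(\tau),\ \tau\in\R)\le\P(\mathcal A_2\le g\mbox{ on }[-L,L])\to\P(\mathcal A_2\le g\mbox{ on }\R)$. For the matching lower bound you must show that the probability that $Y_n$ crosses $g_n$ somewhere \emph{outside} $[-L,L]$ is small uniformly in $n$ once $L$ is large; since $g\in H^1_\ext(\R)$ may be finite at arbitrarily large times (in the paper's application $g(\tau)=R+\tau^2$ is finite everywhere), this is not vacuous, and without it the two limits $n\to\infty$ and $L\to\infty$ cannot be interchanged. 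In the paper this is Proposition~\ref{prop:hittingfar} ($1-a_n^{L,\infty}\le Ce^{-cL}$ uniformly in $n$), which rests on the exponential-in-$T$ decay of the hitting-time kernel uniform in $n$ (Lemma~\ref{lemma:intkerneldecay}), which in turn requires the improved steepest-descent bound of Proposition~\ref{prop:improvedbound} and the random-walk large-deviation estimate of Proposition~\ref{prop:ldp}, plus the uniform $M\to\infty$ extension of the Fredholm formula (Proposition~\ref{prop:halfinfiniteFredholm}). None of this is present in your proposal.

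On the finite interval your route also has a structural problem. You propose to pass to the limit in the path-integral form $\det(\id-K_n^{(t_1)}+S_n^{g_n,t_1,t_2}K_n^{(t_2,t_1)})$ by pointwise kernel convergence plus ``Airy-type tails'' and trace-norm/dominated convergence. But in the limit the two pieces are not separately trace class on the half-infinite domain (the paper says this explicitly after \eqref{defKh} and in the proof of Theorem~\ref{thm:Xfindimdistr}); Airy decay is only in the $+\infty$ direction while the relevant domain extends to $-\infty$, so the dominated-convergence argument you sketch does not apply to that combination as stated, and the tail bounds you hope to import from~\cite{Jo03,FV11,FV16} are not available for this kernel and barrier-restricted walk. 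This is exactly why the paper converts the path-integral identity (its Proposition~\ref{prop:Yrw}, which matches your formula) into the left-endpoint hitting-time representation of Theorem~\ref{thm:belowcurverewrite}: a single kernel on $\ell^2(\Z_+)$, resp.\ $L^2(\R_+)$, with exponential decay in both arguments, for which dominated convergence works once one has the weak convergence of the hitting time and position (Proposition~\ref{prop:Hconv}, proved via Donsker, the Portmanteau theorem and a Cameron--Martin argument to handle the $H^1$ regularity and the discontinuities of $g$), and which simultaneously furnishes the decay in $T$ needed for the whole-line extension. The paper moreover points out that the symmetric two-sided hitting decomposition of~\cite{QR19} creates problems in the backward part of the random walk, which is a further reason the left-endpoint form, rather than your direct barrier-transition-density form, is used. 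So the overall architecture of your proposal is reasonable, but the two key ingredients that make it a proof --- a single, doubly decaying kernel representation on a finite interval and uniform-in-$n$ control of crossings at large times --- are missing.
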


In the rest of this section we provide the basic ingredients for the proof of Theorem~\ref{thm:YbelowacurveR}.
The finite dimensional distributions of the curve $Y_n(t)$ which appears on the left-hand side of \eqref{YbelowacurveR} is expressed in terms of Fredholm determinants in~\cite{Jo03} as follows.
Let
\begin{equation}\label{defT}
T(x,y)=\left\{\begin{array}{cl} 2 & \mbox{if }y\le x\\ 1 & \mbox{if }y=x+1\\ 0 & \mbox{if }y>x+1\end{array}\right.
\end{equation}
be the transition function of the non-intersecting line ensemble for even times.
It will be sufficient for our purposes to consider even times only.
Define the correlation kernel
\begin{equation}\label{defKn}
K_n(2r,x;2s,y)=-\id_{r<s}T^{s-r}(x,y)+\wt K_n(2r,x;2s,y)
\end{equation}
with
\begin{equation}\label{defKntilde}
\wt K_n(2r,x;2s,y)=\frac{-1}{(2\pi i)^2}\oint_{\Gamma_1}\frac{\d z}z\oint_{\Gamma_0}\frac{\d w}w \frac{w^y(1-w)^{n-s}(1+1/w)^s}{z^x(1-z)^{n-r}(1+1/z)^r}\frac z{z-w}
\end{equation}
where, for a set $J$, the integration contour $\Gamma_J$ is a simple counter-clockwise loop including only the poles at $J$. Thus $\Gamma_0$ goes around $0$ and $\Gamma_1$ around $1$ without intersecting.
Note that in~\cite{Jo03}, the contour $\Gamma_1$ was a vertical line in the kernel \eqref{defKntilde},
but it can be deformed to a circle around $1$ as long as the integrand has no singularity at infinity, i.e.\ $x\ge-n+r$.
The latter condition physically means that we choose a space-time location corresponding to the Aztec diamond.

By writing $z/(z-w)=\sum_{j\ge0}(w/z)^j$, the kernel becomes
\begin{equation}\label{kernelrewrite}
\wt K_n(2r,x;2s,y)=\sum_{j\ge0} p_r^{(n)}(x+j)q_s^{(n)}(y+j)
\end{equation}
where the functions
\begin{align}
p_r^{(n)}(x)&=\frac{-1}{2\pi i}\oint_{\Gamma_1}\frac{\d z}z\frac1{z^x(1-z)^{n-r}(1+1/z)^r},\label{defp}\\
q_s^{(n)}(y)&=\frac1{2\pi i}\oint_{\Gamma_0}\frac{\d w}ww^y(1-w)^{n-s}(1+1/w)^s\label{defq}
\end{align}
are related to Krawtchouk polynomials.
The distribution of the non-intersecting line ensemble is characterized as follows.

\begin{proposition}[See (2.22) of~\cite{Jo03}]\label{prop:corrkernel}
Let $2t_0<2t_1<\dots<2t_k$ be even integers in $\{2,\ldots,2n-2\}$ and $x_0,x_1,\dots,x_k$ be integers.
Let $E_i=\{(2t_i,y_i):y_i\in\Z,y_i > x_i\}$ for $i=0,1,\dots,k$ and $E=\cup_{i=0}^k E_i$.
Then
\begin{equation}\label{Yfinitedimdistr}
\P\left(\cap_{i=0}^k \{Y_n(2t_i)\le x_i\}\right)=\det\left(\id-K_n\right)_{\ell^2(E)}
\end{equation}
where $K_n$ is the correlation kernel of the point process and it is given by \eqref{defKn}.
\end{proposition}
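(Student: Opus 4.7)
The plan is to derive~\eqref{Yfinitedimdistr} via the classical determinantal point process machinery for non-intersecting paths, following the scheme of~\cite{Jo03b,Jo03}. The starting point is that $Y_n$ is a Schur process whose alternating odd/even one-step transitions are built from elementary/complete symmetric functions in a single variable; the two-step even-to-even transition collapses to the explicit kernel $T$ in~\eqref{defT}, and both boundary marginals $Y_n(0)$ and $Y_n(2n)$ are packed configurations. The Karlin--McGregor / Lindström--Gessel--Viennot formula then expresses, for any collection of even times $0 < 2t_0 < \dots < 2t_k < 2n$ and any admissible positions $\{x_i^{(a)}\}$, the joint probability as a product of determinants $\prod_i \det[T^{t_{i+1}-t_i}(x_i^{(a)},x_{i+1}^{(b)})]_{a,b}$ with the prescribed initial and final marginals.

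From this product-of-determinants structure, I would invoke the Eynard--Mehta theorem in its extended multi-time form. It yields that the space-time set $\{(2t_i,y):y\in Y_n(2t_i)\}$ is determinantal with a correlation kernel of exactly the form~\eqref{defKn}, namely $-\id_{r<s}T^{s-r}(x,y)$ plus a finite-rank correction $\wt K_n$ that encodes the boundary conditions via a biorthogonal system inverse to the relevant Gram matrix of one-step propagators. The Fredholm determinant identity~\eqref{Yfinitedimdistr} is then the standard void-probability formula: the probability that a determinantal process has no points in a set $E$ equals $\det(\id-K)_{\ell^2(E)}$, applied to the Borel sets $E_i=\{(2t_i,y):y>x_i\}$.

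It remains to put $\wt K_n$ into the double contour integral form~\eqref{defKntilde}. Since both boundary configurations are packed, the relevant Gram matrix is of binomial type and its inverse is most naturally expressed through Krawtchouk polynomials; equivalently, one makes a contour-integral ansatz for the biorthogonal functions, $p_r^{(n)}(x)$ and $q_s^{(n)}(y)$ as in~\eqref{defp}--\eqref{defq}, verifies the required orthogonality and boundary relations by residue calculus, and then resums the resulting bilinear expansion $\sum_{j\ge 0}p_r^{(n)}(x+j)q_s^{(n)}(y+j)$ via the geometric series $z/(z-w)=\sum_{j\ge 0}(w/z)^j$ on nested contours with $|w|<|z|$, which produces~\eqref{defKntilde}. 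I expect the main technical obstacle to be precisely this explicit identification of the biorthogonal system from the Eynard--Mehta data, together with the contour-deformation bookkeeping that rules out a contribution at infinity when $\Gamma_1$ is collapsed from the vertical line used in~\cite{Jo03} to a small loop around $1$; this is exactly the content of the condition $x\ge-n+r$ highlighted in the excerpt.
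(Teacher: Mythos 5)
Your outline is essentially the argument behind the cited result: the paper itself does not prove Proposition~\ref{prop:corrkernel} but imports it directly from (2.22) of~\cite{Jo03}, where it is established exactly along the lines you describe (Lindstr\"om--Gessel--Viennot/Karlin--McGregor product-of-determinants structure, the extended Eynard--Mehta theorem, Krawtchouk biorthogonalization giving $p_r^{(n)},q_s^{(n)}$, resummation of $z/(z-w)$, and the gap-probability formula), with the only paper-specific addition being the contour deformation of $\Gamma_1$ under $x\ge -n+r$, which you also address. So the proposal is correct in approach and matches the source the paper relies on, modulo carrying out the biorthogonalization details explicitly.
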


As part of the proof of Theorem~\ref{thm:YbelowacurveR}
the right-hand side of \eqref{YbelowacurveR} with $\tau\in\R$ replaced by a finite interval $\tau\in[L,M]$ can be written as a Fredholm determinant using Theorem 2 of~\cite{CQR11} which we state below.
The theorem was stated in~\cite{CQR11} only for function $g\in H^1([L,M])$, i.e.\ with derivative in $L^2([L,M])$,
but the proof in~\cite{CQR11} applies for $H^1_\ext([L,M])$, see also subsequent works~\cite{QR19,MQR17}.
We denote by
\begin{equation}
K_{\Ai}(x,y)=\int_0^\infty\d\lambda\Ai(x+\lambda)\Ai(y+\lambda)
\end{equation}
the Airy kernel and by $H=-\partial_x^2+x$ the Airy Hamiltonian.
\begin{theorem}[Theorem 2 and 3 of~\cite{CQR11}]\label{thm:CQR}
Let $L<M$ be fixed and $g\in H^1_\ext([L,M])$.
Then
\begin{equation}\label{CQR}
\P\left(\mathcal A_2(\tau)\le g(\tau)\mbox{ for }\tau\in[L,M]\right)=\det\left(\id-\left(e^{(L-M)H}-\Theta_{L,M}^g\right)e^{(M-L)H}K_{\Ai}\right)_{L^2(\R)}
\end{equation}
where one can write\footnote{We have corrected a typo in Theorem~3 of~\cite{CQR11} in the Gaussian prefactor on the r.h.s.~of \eqref{CQRkernel}.}
\begin{multline}\label{CQRkernel}
\left(e^{(L-M)H}-\Theta_{L,M}^g\right)(u,v)=e^{Lu-Mv-L^3/3+M^3/3}\frac{e^{-\frac{(u-L^2-v+M^2)^2}{4(M-L)}}}{\sqrt{4\pi(M-L)}}\\
\times\left(1-\P_{b(L)=u-L^2,b(M)=v-M^2}\left(b(\tau)\le g(\tau)-\tau^2\mbox{ for }\tau\in[L,M]\right)\right)
\end{multline}
with $b(\tau)$ being a Brownian bridge from $u-L^2$ to $v-M^2$ with diffusion coefficient $2$.
\end{theorem}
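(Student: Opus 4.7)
The plan is to use a discretization-plus-path-integral strategy: approximate the event on the left-hand side of \eqref{CQR} by sampling $\mathcal A_2$ on a mesh of times, express the resulting finite-dimensional probability as a Fredholm determinant with the extended Airy kernel, collapse the multi-level determinant to a single-level one on $L^2(\R)$, and finally identify the resulting kernel by Feynman--Kac with the Brownian-bridge hitting probability in \eqref{CQRkernel}.

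First I would discretize: by almost sure continuity of $\mathcal A_2$ and monotonicity, for a partition $L=\tau_0<\tau_1<\dots<\tau_N=M$ with mesh $\epsilon=(M-L)/N$,
\begin{equation*}
\P(\mathcal A_2(\tau)\le g(\tau),\ \tau\in[L,M])=\lim_{\epsilon\downarrow 0}\P(\mathcal A_2(\tau_j)\le g(\tau_j),\ 0\le j\le N).
\end{equation*}
The right-hand side is the standard multi-level Fredholm determinant with the extended Airy kernel, whose off-diagonal-in-time part has a semigroup factorization in terms of $e^{-\tau H}$ and $K_{\Ai}$. Using a telescoping path-integral identity of the type used repeatedly for determinantal processes (conjugating by the semigroup and absorbing the successive projections into one path-ordered product), the multi-level determinant reduces to a single-level Fredholm determinant on $L^2(\R)$:
\begin{equation*}
\det\!\left(\id-\bigl(e^{(L-M)H}-\Theta^g_\epsilon\bigr) e^{(M-L)H} K_{\Ai}\right)_{L^2(\R)},
\end{equation*}
where $\Theta^g_\epsilon=\bar P_{g(\tau_0)}\, e^{-\epsilon H}\,\bar P_{g(\tau_1)}\, e^{-\epsilon H}\cdots e^{-\epsilon H}\,\bar P_{g(\tau_N)}$ with $\bar P_a$ multiplication by $\id_{(-\infty,a]}$.

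Second, I would let $\epsilon\downarrow 0$ and invoke Feynman--Kac for $H=-\partial_x^2+x$: the underlying diffusion has generator $\partial_x^2$ (so a Brownian motion $b$ with diffusion coefficient $2$) in the linear potential $V(x)=x$. The discrete product $\Theta^g_\epsilon(u,v)$ converges to the Feynman--Kac kernel with Dirichlet data $b\le g$, namely
\begin{equation*}
\Theta^g_{L,M}(u,v)=\frac{\partial}{\partial v}\,\E_{b(L)=u}\!\left[\exp\!\left(-\int_L^M b(\tau)\,d\tau\right) \id\{b(\tau)\le g(\tau),\ \tau\in[L,M],\ b(M)\le v\}\right].
\end{equation*}
Completing the square via the time-dependent shift $\tilde b(\tau):=b(\tau)-\tau^2$ (a Cameron--Martin--Girsanov computation) absorbs the linear potential and produces exactly the Gaussian prefactor $\exp(Lu-Mv-L^3/3+M^3/3)\, e^{-(u-L^2-v+M^2)^2/(4(M-L))}/\sqrt{4\pi(M-L)}$ of \eqref{CQRkernel}, while the barrier condition becomes $\tilde b(\tau)\le g(\tau)-\tau^2$ and the underlying process becomes the standard diffusion-$2$ Brownian bridge from $u-L^2$ to $v-M^2$, as in the statement.

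The main obstacle is the passage to the limit inside the Fredholm determinant, i.e.\ proving trace-norm convergence of $(e^{(L-M)H}-\Theta^g_\epsilon)e^{(M-L)H}K_{\Ai}$ to its continuum analogue. The smoothing factor $e^{(M-L)H}K_{\Ai}$ supplies the super-exponential Airy decay at $+\infty$ that renders the operators trace-class, but since $g\in H^1_{\rm ext}$ may take the value $+\infty$ outside a finite or countable collection of intervals, the bridge is effectively pinned near those intervals and a uniform estimate on $\Theta^g_\epsilon$ is delicate. I would handle this by monotone approximation of $g$ by truncated $C^1$ functions, Donsker-type convergence of the discrete path measure to the Brownian bridge, and dominated convergence in the Hilbert--Schmidt norm using Airy tail bounds that are uniform in the approximation parameters.
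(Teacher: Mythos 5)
This theorem is not proved in the paper at all: it is imported from Corwin--Quastel--Remenik (Theorems 2 and 3 of \cite{CQR11}, with a typo in the Gaussian prefactor corrected), the authors only remarking that the argument there extends from $H^1$ to $H^1_\ext$. Your sketch --- finite-dimensional approximation of the continuum event, the path-integral (Borodin--Corwin--Remenik) collapse to a single Fredholm determinant on $L^2(\R)$ with kernel $(\id-(e^{(L-M)H}-\Theta^g_\epsilon)e^{(M-L)H}K_{\Ai})$, then Feynman--Kac for $H=-\partial_x^2+x$ and the parabolic Cameron--Martin shift to identify the limiting $\Theta^g_{L,M}$ with the Brownian-bridge crossing kernel --- is precisely the strategy of that original proof, so your proposal is correct in outline and essentially the same approach as the source on which the paper relies.
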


Finally we state an alternative Fredholm determinant expression for the probability that the Airy$_2$ process stays below a given function on a fixed interval.
The kernel involves the hitting time and position of a Brownian motion and it serves as an important ingredient for the proof of Theorem~\ref{thm:Xbelowh}.
On the other hand, Theorem~\ref{thm:ReformulationKernel} is of independent interest.

Let us consider $\alpha\in[L,M]$.
The Brownian bridge $b$ that appears on the right-hand side of \eqref{CQRkernel} can be thought of as the concatenation of a bridge $b(L)=u-L^2$ to $b(\alpha)=\xi$
and another one from $b(\alpha)=\xi$ to $b(M)=v-M^2$ where $\xi$ is an arbitrary value.
Equivalently, given $b(\alpha)=\xi$, one can think of the first bridge starting from $b(\alpha)=\xi$ and going backwards in time until $b(L)=u-L^2$.

Let us introduce the following hitting times and positions
\begin{align}
T_+^{\xi,\alpha}& = \inf\{t>\alpha | b(t)\not\leq g(t)-t^2\textrm{ with }b(\alpha)=\xi\},& X_+^{\xi,\alpha}&= b(T_+^{\xi,\alpha}),\label{defTX+}\\
T_-^{\xi,\alpha}& = \sup\{t<\alpha | b(t)\not\leq g(t)-t^2\textrm{ with }b(\alpha)=\xi\},& X_-^{\xi,\alpha}&= b(T_-^{\xi,\alpha}).\label{defTX-}
\end{align}
Recall the definition of $\Ai^{(s)}$ given in \eqref{eqAiryS} and let
\begin{equation}\label{defM}\begin{aligned}
M_-^\alpha(x,\xi)&=\int_{L}^\alpha \int_\R \P(T_-^{\xi,\alpha}\in\d t, X_-^{\xi,\alpha}\in d\zeta) \Ai^{(t)}(\zeta+x),\\
M_+^\alpha(\xi,y)&=\int_{\alpha}^M \int_\R \P(T_+^{\xi,\alpha}\in\d t, X_+^{\xi,\alpha}\in\d\zeta) \Ai^{(-t)}(\zeta+y).
\end{aligned}\end{equation}
Remark that if $\xi\geq g(\alpha)-\alpha^2$, then $\P(T_+^{\xi,\alpha}\in\d t, X_+^{\xi,\alpha}\in\d\zeta)=\delta_\alpha(t)\delta_\xi(\zeta)$,
from which $M_+^\alpha(\xi,y)=\Ai^{(-\alpha)}(\xi+y)$ and similarly $M_-^\alpha(x,\xi)=\Ai^{(\alpha)}(\xi+x)$.

\begin{theorem}\label{thm:ReformulationKernel}
Let $g\in H^1_\ext([L,M])$.
We have
\begin{equation}\label{ReformulationKernel}
\P({\cal A}_2(r)\leq g(r), r\in [L,M]) = \det(\id-K)_{L^2(\R_+)}
\end{equation}
with kernel given by
\begin{multline}\label{defKreformulated}
K(x,y)=\int_\R\d\xi M_-^\alpha(x,\xi) \Ai^{(-\alpha)}(\xi+y)\\+ \int_\R\d\xi \Ai^{(\alpha)}(\xi+x)M_+^\alpha(\xi,y) -\int_\R\d\xi M_-^\alpha(x,\xi) M_+^\alpha(\xi,y)
\end{multline}
where $\alpha\in[L,M]$ is arbitrary.
\end{theorem}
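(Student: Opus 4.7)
The plan is to derive Theorem~\ref{thm:ReformulationKernel} from Theorem~\ref{thm:CQR} by (i) reducing the Fredholm determinant from $L^2(\R)$ to $L^2(\R_+)$ via the factorization $K_{\Ai}=BB^*$ with $B(x,\lambda)=\Ai(x+\lambda)\id_{\lambda\ge0}$, (ii) decomposing $\Theta_{L,M}^g$ at the intermediate time $\alpha$ in order to produce the three-term structure of $K$, and (iii) applying the strong Markov property at the last/first hit of the curve to introduce $(T_\pm^{\xi,\alpha},X_\pm^{\xi,\alpha})$. Using $K_{\Ai}=BB^*$, the Airy--Plancherel identity $B^*B=\id$ on $L^2(\R_+)$, the cyclic property of Fredholm determinants, and the trivial identity $(e^{(L-M)H}-\Theta_{L,M}^g)e^{(M-L)H}=\id-\Theta_{L,M}^g e^{(M-L)H}$, Theorem~\ref{thm:CQR} recasts as
\[
\P({\cal A}_2(r)\leq g(r),\,r\in[L,M])=\det\!\big(\id-B^*(e^{(L-M)H}-\Theta_{L,M}^g)e^{(M-L)H}B\big)_{L^2(\R_+)}.
\]
It then suffices to show that the inner operator coincides with $e^{L(y-x)}K(x,y)$, which is an $M_{e^{-L\cdot}}$-conjugate of $K$ and therefore has the same Fredholm determinant.

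To expose the three-term structure, I split the Brownian bridge in $\Theta_{L,M}^g$ at time $\alpha$. Conditioning on $b(\alpha)=\xi$, let $p_-(u-L^2,\xi)$ and $p_+(\xi,v-M^2)$ be the hit probabilities of the two sub-bridges on $[L,\alpha]$ and $[\alpha,M]$, extended by $p_\pm\equiv1$ whenever $\xi\geq g(\alpha)-\alpha^2$. Chapman--Kolmogorov combined with the inclusion-exclusion $1-(1-p_-)(1-p_+)=p_-+p_+-p_-p_+$ yields
\[
(e^{(L-M)H}-\Theta_{L,M}^g)(u,v)=e^{Lu-Mv-L^3/3+M^3/3}\!\int d\xi\,q_{L,\alpha}(u-L^2,\xi)\,q_{\alpha,M}(\xi,v-M^2)\,[p_-+p_+-p_-p_+],
\]
with $q_s(a,b)=e^{-(a-b)^2/(4s)}/\sqrt{4\pi s}$. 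The strong Markov property at the last hit before $\alpha$ (applied to the bridge or, equivalently, to the time-reversed motion started at $\xi$ at time $\alpha$) then gives
\[
p_-(u-L^2,\xi)\,q_{L,\alpha}(u-L^2,\xi)=\int_L^\alpha\!dt\!\int_\R\!d\zeta\,\P(T_-^{\xi,\alpha}\in dt,\,X_-^{\xi,\alpha}\in d\zeta)\,q_{t-L}(\zeta,u-L^2),
\]
and analogously for $p_+\,q_{\alpha,M}$ with $(T_+^{\xi,\alpha},X_+^{\xi,\alpha})$ and $q_{M-t}$.

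To finish, I evaluate $B^*(\cdots)e^{(M-L)H}B(x,y)$. Using $e^{(M-L)H}B(v,y)=e^{-(M-L)y}\Ai(v+y)$ and the Airy--Gaussian identity
\[
\int dw\,q_s(\zeta,w)\,\Ai(w+c)=\Ai^{(s)}(\zeta+c)
\]
(a short Fourier computation), the $u$- and $v$-integrals collapse and, after combining with the gauge prefactors $e^{Lu-L^3/3}$ and $e^{-Mv+M^3/3}$ by completion of squares, the three contributions reduce to $\int d\xi\,M_-^\alpha(x,\xi)\Ai^{(-\alpha)}(\xi+y)$, $\int d\xi\,\Ai^{(\alpha)}(\xi+x)M_+^\alpha(\xi,y)$ and $-\int d\xi\,M_-^\alpha(x,\xi)M_+^\alpha(\xi,y)$, multiplied by the residual conjugating factor $e^{L(y-x)}$ noted above. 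The main obstacle is precisely this final bookkeeping: the direct Airy identity produces factors such as $\Ai^{(M-t)}(M^2+y+\zeta-2M(M-t))$, and only a delicate cancellation among the gauge exponential, the Gaussian completion-of-squares shift and the cubic constants $L^3/3$, $M^3/3$ collapses them into the clean $\Ai^{(\pm t)}(\zeta+\cdot)$ of the statement, thereby removing all $L$- and $M$-dependence from the Airy arguments. A built-in consistency check is that the final formula is manifestly $\alpha$-independent while the intermediate expressions are not; this invariance follows from the semigroup relation $\Theta_{L,\alpha}^g\Theta_{\alpha,M}^g=\Theta_{L,M}^g$ combined with the strong Markov structure used in the decomposition.
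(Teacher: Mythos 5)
Your proposal is correct and follows essentially the same route as the paper's proof: recast the right-hand side of Theorem~\ref{thm:CQR} via the factorization of $K_{\Ai}$ and cyclicity of the Fredholm determinant to pass to $L^2(\R_+)$ (up to the conjugation $e^{L(y-x)}$), split the bridge-crossing event at time $\alpha$ by the Markov property and inclusion--exclusion, represent the two half-crossing densities through $(T_\pm^{\xi,\alpha},X_\pm^{\xi,\alpha})$, and integrate the Airy factors against the heat kernels. The exponential bookkeeping you flag as the main obstacle is precisely what the paper streamlines by first changing variables $u\to u+L^2$, $v\to v+M^2$ so that the gauge factors assemble into $\Ai^{(L)}(x+u)\Ai^{(-M)}(y+v)$, and then applying the shifted-Airy heat identity of Lemma~\ref{lem:app} with general shift parameter, which collapses everything to $\Ai^{(\pm t)}(\zeta+\cdot)$ with no residual $L$- or $M$-dependence, exactly the cancellation you assert.
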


\begin{remark}\label{remark3.2}
The idea of the decomposition goes back to~\cite{QR19}.
However the formula stated there holds true only when the hitting position of the Brownian bridge is on the graph of the function $g(t)-t^2$.
As in~\cite{MQR17}, the present formulas include the possibility that the Brownian motion hits at a position $t$ strictly greater than $g(t)-t^2$, which can happen if $g$ is not continuous.

For finite $L,M$, the kernel is well-defined if $g$ is bounded from below, while in the $L\to-\infty$ and/or $M\to\infty$,
it should be enough to consider $g(t)\geq c+\epsilon t^2$ for some $\epsilon>0$, as in this case the probability that the Airy$_2$ process stays below $g$ at all times remains strictly positive.
In the representation of~\cite{QR19}, it is shown to hold at least whenever $g(t)\geq c+\epsilon t^2$ for some $\epsilon>1/4$.
\end{remark}

Theorem~\ref{thm:ReformulationKernel} is used in the proof of our main result in the special case when $\alpha=L$ is the left endpoint of the interval.
Then the kernel in \eqref{defKreformulated} is even simpler, see Theorem~\ref{thm:belowcurverewrite} below for the explicit expression and for its discrete counterpart.
On the other hand, the choice of the left endpoint avoids certain difficulties with the backward part of the random walk in the asymptotic analysis.

\subsubsection*{Outline}
The rest of the paper is organized as follows.
In Section~\ref{s:rw}, we provide a random walk representation of probabilities that appear on the left-hand side of \eqref{YbelowacurveR} in the finite time case
as a discrete analogue of Theorem~\ref{thm:CQR}.
We prove Theorem~\ref{thm:ReformulationKernel} about the continuum statistics for the Airy$_2$ process in terms of hitting times and positions in Section~\ref{s:hitting}.
The setup is specialized for the case when we start the Brownian motion at the left endpoint of the interval in Section~\ref{s:leftepa}
where also the analogous Fredholm determinant formulas are given in the discrete case for the hitting times and positions of the corresponding random walk.
Section~\ref{s:asympstate} contains the asymptotic statements of the paper in the $n\to\infty$ limit
which together lead to Theorem~\ref{thm:Ybelowacurve} that is the finite interval version of Theorem~\ref{thm:YbelowacurveR}.
Section~\ref{s:extension} extends Theorem~\ref{thm:Ybelowacurve} to the full line statement and proves Theorem~\ref{thm:YbelowacurveR}.
The main results of the paper, Theorems~\ref{thm:Xbelowh} and~\ref{thm:Xfindimdistr} are derived from Theorem~\ref{thm:YbelowacurveR} in Section~\ref{s:identification}.
The proofs of the asymptotic statements are postponed to Section~\ref{s:asympproofs}.

\paragraph{Acknowledgements:}
This work has originated in discussions with F.\ Colomo and A.\ Sportiello about their work and to both ICERM and the Galileo Galilei Institute,
which provided the platform to make such discussions possible.
The work of P.L.~Ferrari is supported by the German Research Foundation through the Collaborative Research Center 1060 ``The Mathematics of Emergent Effects'', project B04,
and by the Deutsche Forschungsgemeinschaft (DFG, German Research Foundation) under Germany's Excellence Strategy -- GZ 2047/1, Projekt ID 390685813.
The work of B.\ Vet\H o was supported by the NKFI (National Research, Development and Innovation Office) grants PD123994 and FK123962
and by the Bolyai Research Scholarship of the Hungarian Academy of Sciences.

\section{Random walk representation}\label{s:rw}

The joint distribution of the top curve $Y_n$ of non-intersecting lines at different times is characterized with the extended correlation kernel $K_n$ in Proposition~\ref{prop:corrkernel}.
The main contribution of this section towards the proof of Theorem~\ref{thm:CQR} is that we rewrite \eqref{Yfinitedimdistr} in a Fredholm determinant form
where the kernel is a discrete analogue of \eqref{CQRkernel}, i.e.\ it involves a probability that a certain random walk remains below given values at various steps, see Proposition~\ref{prop:Yrw} below.
A random walk representation of similar spirit also appears in~\cite{MQR17}.

Let
\begin{equation}
T_1(x,y)=\left\{\begin{array}{cl} 1 & \mbox{if }y\in\{x,x+1\}\\ 0 & \mbox{otherwise}\end{array}\right.\quad\mbox{and}\quad
T_2(x,y)=\left\{\begin{array}{cl} 1 & \mbox{if }y\le x\\ 0 & \mbox{otherwise}\end{array}\right.
\end{equation}
be two transition operators.
Then $T_1$ and $T_2$ commute and by the construction of~\cite{Jo03}, the operator $T$ given in \eqref{defT} appears as $T=T_1T_2$.

\begin{proposition}\label{prop:rwconstruction}
Let $X_1$ and $X_2$ be two independent random variables with the distributions
\begin{equation}\label{defX}\begin{aligned}
\P(X_1=0)&=\frac{\sqrt2-1}{\sqrt2},\quad\P(X_1=1)=\frac1{\sqrt2},\\
\P(X_2=-k)&=\sqrt2(\sqrt2-1)^{k+1}\quad k=0,1,2,\dots
\end{aligned}\end{equation}
and let $X=X_1+X_2$.
Then $\E(X)=0$, $\Var(X)=\sqrt2$, and
\begin{equation}\label{Tkernel}
T(x,y)=(\sqrt2+1)^{2-y+x}\P(X=y-x).
\end{equation}
Equivalently, for any function $f\in\ell^1(\Z)$,
\begin{equation}\label{Trepr}
(Tf)(x)=(\sqrt2+1)^2\E\left((\sqrt2+1)^{-X}f(x+X)\right).
\end{equation}
\end{proposition}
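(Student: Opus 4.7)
The plan is a direct verification, organized around the algebraic identity $(\sqrt2+1)(\sqrt2-1)=1$. Setting $q=\sqrt2-1$, the relation $\sqrt2\,q=2-\sqrt2=1-q$ rewrites the law of $X_2$ as $\P(X_2=-k)=(1-q)q^k$, so $-X_2$ is a standard geometric variable on $\Z_{\ge0}$ with success probability $1-q$; likewise $X_1$ is Bernoulli with $p=1/\sqrt2$. In particular $X=X_1+X_2$ is supported on $\{\dots,-2,-1,0,1\}$, which already matches the fact that $T(x,x+m)$ vanishes as soon as $m\ge2$ in \eqref{defT}.

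Next I would compute $\P(X=m)$ by splitting on the value of $X_1$. The three remaining cases $m=1$, $m=0$ and $m=-j$ with $j\ge1$ give
\begin{equation*}
\P(X=1)=\sqrt2-1,\qquad \P(X=0)=2(\sqrt2-1)^2,\qquad \P(X=-j)=2(\sqrt2-1)^{j+2}.
\end{equation*}
Multiplying each by $(\sqrt2+1)^{2-m}$ and collapsing the powers via $(\sqrt2+1)(\sqrt2-1)=1$ yields $1$, $2$ and $2$ respectively, reproducing the three nonzero values of $T$ in \eqref{defT}; this establishes \eqref{Tkernel}. The moment assertions then follow from the standard Bernoulli and geometric formulas: $\E(X_1)=1/\sqrt2$ and $\E(X_2)=-q/(1-q)=-1/\sqrt2$ combine to $\E(X)=0$, while $\Var(X_1)=p(1-p)=1/\sqrt2-1/2$ and $\Var(X_2)=q/(1-q)^2$, which after rationalizing equals $(\sqrt2+1)/2$, sum to $\sqrt2$. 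Finally, \eqref{Trepr} is merely a rewriting of \eqref{Tkernel}: for $f\in\ell^1(\Z)$,
\begin{equation*}
(Tf)(x)=\sum_{m}(\sqrt2+1)^{2-m}\P(X=m)\,f(x+m)=(\sqrt2+1)^2\,\E\bigl((\sqrt2+1)^{-X}f(x+X)\bigr),
\end{equation*}
with the interchange of sum and expectation justified by absolute convergence, since $\P(X=m)$ has a geometric tail as $m\to-\infty$.

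There is no real obstacle in this proposition; the whole argument is elementary bookkeeping. The only step requiring a moment of care is the convolution in the case $m\le0$, where the two contributions $\P(X_1=0,X_2=m)$ and $\P(X_1=1,X_2=m-1)$ turn out to produce exactly the same power of $\sqrt2-1$, giving the factor $2$ that matches $T(x,y)=2$ for $y\le x$.
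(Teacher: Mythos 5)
Your proof is correct: the convolution values $\P(X=1)=\sqrt2-1$, $\P(X=0)=2(\sqrt2-1)^2$, $\P(X=-j)=2(\sqrt2-1)^{j+2}$ are right, the weighted values $(\sqrt2+1)^{2-m}\P(X=m)$ do reproduce the three nonzero entries of \eqref{defT} (and vanish for $m\ge2$ since $X\le1$), and the moment computations give $\E(X)=0$, $\Var(X)=\sqrt2$. The route is genuinely different from the paper's, though at the same elementary level. The paper never computes the law of $X$ explicitly: it works with a general parameter $p\in(0,1)$ for the two step distributions, proves the weighted-expectation identities $\E\big((1/p)^{-X_1}f(x+X_1)\big)=\tfrac{p}{p+1}(T_1f)(x)$ and $\E\big((1/p)^{-X_2}f(x+X_2)\big)=(1-p)(T_2f)(x)$ separately, and then multiplies them using independence together with the operator factorization $T=T_1T_2$ stated just before the proposition; the specific value $p=\sqrt2-1$ is then \emph{derived} from the centering requirement $\E(X)=0$, which also makes the prefactor $\tfrac{p(1-p)}{p+1}$ collapse to $(\sqrt2+1)^{-2}$. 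Your direct convolution check against the explicit kernel \eqref{defT} is more self-contained (it does not use $T=T_1T_2$ at all), at the cost of not explaining where $\sqrt2-1$ comes from; the paper's version buys that explanation and reuses the two one-factor identities later (cf.\ the proof of Lemma~\ref{lemma:Tpq}-type manipulations), while yours buys a shorter, purely computational verification. One small phrasing point: in justifying \eqref{Trepr}, the geometric tail of $\P(X=m)$ is exactly cancelled by the weight $(\sqrt2+1)^{-m}$, so the combined factor is only bounded (by $2(\sqrt2-1)^2$); absolute convergence really rests on $f\in\ell^1(\Z)$, which you do assume, so the argument stands as written.
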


\begin{proof}[Proof of Proposition~\ref{prop:rwconstruction}]
Slightly more generally, let
\begin{equation}\label{defrwsteps}\begin{aligned}
\P(X_1=0)&=\frac p{p+1},\quad\P(X_1=1)=\frac1{p+1},\\
\P(X_2=-k)&=(1-p)p^k\quad k=0,1,2,\dots
\end{aligned}\end{equation}
for some $p\in(0,1)$.
Then
\begin{align}
\E\Big(\big(\tfrac1p\big)^{-X_1}f(x+X_1)\Big)&=\frac p{p+1}f(x)+\frac1{p+1}\left(\frac1p\right)^{-1}f(x+1)=\frac p{p+1}(T_1f)(x),\\
\E\Big(\big(\tfrac1p\big)^{-X_2}f(x+X_2)\Big)&=\sum_{k=0}^\infty(1-p)p^k\left(\frac1p\right)^kf(x-k)=(1-p)(T_2f)(x).
\end{align}
Hence using $X=X_1+X_2$ and $T=T_1T_2$ we get
\begin{equation}\label{Tprepr}
\E\Big(\big(\tfrac1p\big)^{-X}f(x+X)\Big)=\frac{p(1-p)}{p+1}(Tf)(x).
\end{equation}
Simple computations yield
\begin{equation}\begin{aligned}
\E(X_1)&=\frac1{p+1},\quad\Var(X_1)=\frac1{p+1}-\frac1{(p+1)^2},\\
\E(X_2)&=-\frac p{1-p},\quad\Var(X_2)=\frac p{(1-p)^2}.
\end{aligned}\end{equation}
The condition $\E(X)=0$ is satisfied if $p=\sqrt2-1$.
In this case, $\Var(X)=\sqrt2$ and \eqref{Tprepr} reduces to \eqref{Trepr}.
\end{proof}

For the rest of the paper, we introduce the notation
\begin{equation}\label{rw}
S_m=X^{(1)}+X^{(2)}+\dots+X^{(m)}\qquad m=1,2,\dots
\end{equation}
for the random walk with step distribution given by the operator $T$
where the sequence $X^{(1)},X^{(2)},\dots$ of steps are independent and distributed as the random variable $X$ defined in Proposition~\ref{prop:rwconstruction}.
Next we write the probability on the the left-hand side of \eqref{Yfinitedimdistr} as a Fredholm determinant of a path integral kernel based on~\cite{BCR13}
and we also rewrite the path integral kernel in terms of the random walk $S_m$ given in \eqref{rw}.
This provides a discrete analogue of Theorem~\ref{thm:CQR}.

\begin{proposition}\label{prop:Yrw}
Let $0<2t_0<2t_1<\dots<2t_k<2n$ be even integers and $x_0,x_1,\dots,x_k$ be integers.
Then
\begin{multline}\label{Ypathint}
\P\left(\cap_{i=0}^k \{Y_n(2t_i)\le x_i\}\right)\\
=\det\left(\id-\wt K_n(2t_0,\cdot;2t_0,\cdot)+\ol P_{x_0}T^{t_1-t_0}\ol P_{x_1}\cdots T^{t_k-t_{k-1}}\ol P_{x_k}\wt K_n(2t_k,\cdot;2t_0,\cdot)\right)_{\ell^2(\Z)}
\end{multline}
holds where the projections are given by $\ol P_af(x)=\id_{x\le a}f(x)$. Furthermore, we have
\begin{multline}\label{pathintegralrw}
\ol P_{x_0}T^{t_1-t_0}\ol P_{x_1}\cdots T^{t_k-t_{k-1}}\ol P_{x_k}(x,y)\\
=(\sqrt2+1)^{2(t_k-t_0)+x-y}\P(S_{t_k-t_0}=y-x)\P_{S_0=0,S_{t_k-t_0}=y-x}(x+S_{t_i-t_0}\le x_i\mbox{ for }i=0,\dots,k)
\end{multline}
where $S_j$ is the random walk defined in \eqref{rw}.
\end{proposition}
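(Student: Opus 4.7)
The statement splits into two parts: the Fredholm determinant identity \eqref{Ypathint}, which converts the extended-kernel determinant of Proposition~\ref{prop:corrkernel} into a path-integral form on a single copy of $\ell^2(\Z)$, and the explicit identity \eqref{pathintegralrw} for the kernel appearing inside. I would prove them in this order.

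For \eqref{Ypathint}, the plan is to apply the path-integral theorem of Borodin--Corwin--Remenik~\cite{BCR13} to the extended-kernel Fredholm determinant provided by Proposition~\ref{prop:corrkernel}. The kernel $K_n$ in \eqref{defKn}--\eqref{defKntilde} is already in the form required: an off-diagonal in time part $-\id_{r<s}T^{s-r}$ built from the transition operator $T$ of \eqref{defT}, plus a remainder $\wt K_n$ with the biorthogonal factorization \eqref{kernelrewrite}. The one hypothesis that must be checked is the semigroup compatibility between $T$ and $\wt K_n$; using the translation invariance of $T$ and the factorization \eqref{kernelrewrite}, this reduces to the identity $Tp_{r+1}^{(n)}=p_r^{(n)}$, which is a short contour/residue computation from \eqref{defp}: applying $T$ rewrites the $y$-sum as a geometric series in the integration variable $z$ around $\Gamma_1$, and the resulting factor $(1+z)/(z(1-z))$ is exactly what turns the integrand of $p_{r+1}^{(n)}$ into that of $p_r^{(n)}$. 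With this compatibility in hand, BCR13 directly produces \eqref{Ypathint}.

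For \eqref{pathintegralrw}, I would iterate Proposition~\ref{prop:rwconstruction}. A straightforward induction on $m$ using translation invariance and the i.i.d.\ structure of the steps gives
\begin{equation*}
T^m(x,y)=(\sqrt2+1)^{2m-y+x}\,\P(S_m=y-x).
\end{equation*}
Substituting this into the operator product and using $\ol P_a(x,y)=\id_{x\le a}\delta_{x,y}$, the kernel $\ol P_{x_0}T^{t_1-t_0}\ol P_{x_1}\cdots T^{t_k-t_{k-1}}\ol P_{x_k}(x,y)$ becomes a multiple sum over intermediate positions $z_1,\dots,z_{k-1}$ with constraints $x\le x_0$, $z_i\le x_i$ for $1\le i\le k-1$, and $y\le x_k$. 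The exponential prefactors across consecutive factors telescope to the single overall weight $(\sqrt2+1)^{2(t_k-t_0)+x-y}$, while the remaining convolution of step distributions with indicator constraints is, by the Markov property of \eqref{rw}, equal to
\begin{equation*}
\P\bigl(S_{t_k-t_0}=y-x,\ x+S_{t_i-t_0}\le x_i\text{ for }i=0,1,\dots,k\bigr);
\end{equation*}
the $i=0$ and $i=k$ constraints come from the outer projections together with $S_0=0$ and $x+S_{t_k-t_0}=y$. Factoring out $\P(S_{t_k-t_0}=y-x)$ gives the conditional probability on the right-hand side of \eqref{pathintegralrw}.

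The main technical obstacle lies in the first step: the operator $T$ has infinite row sums and is not bounded on $\ell^2(\Z)$, so the formal BCR13 manipulations require care, and intermediate sums such as $\sum_{y\le x}p_r^{(n)}(y)$ must be interpreted via analytic continuation of the contour integrals rather than as convergent series. The natural remedy, which must be threaded through the whole argument, is to conjugate by the geometric weight $(\sqrt2+1)^{-x}$; under this conjugation $T$ becomes the operator in \eqref{Trepr} acting on weighted $\ell^1$, the kernel $\wt K_n$ retains enough decay coming from \eqref{defKntilde} to be trace class in the conjugated space, and the Fredholm determinant is invariant. Verifying this setup together with the semigroup identity $Tp_{r+1}^{(n)}=p_r^{(n)}$ is the only genuinely non-routine step; once they are in place, the random-walk rewriting in \eqref{pathintegralrw} is a direct combinatorial computation.
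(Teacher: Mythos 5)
Your proposal follows essentially the same route as the paper: \eqref{Ypathint} is obtained by applying Theorem 3.3 of \cite{BCR13} to the extended kernel of Proposition~\ref{prop:corrkernel}, and \eqref{pathintegralrw} by iterating Proposition~\ref{prop:rwconstruction} (i.e.\ \eqref{Trepr}) and then decomposing over the endpoint of the walk, which is exactly what the paper does. Two small remarks. First, the hypothesis of \cite{BCR13} as used here is the two-sided relation $T^{t_j-t_i}\wt K_n(2t_j,\cdot;2t_j,\cdot)=\wt K_n(2t_i,\cdot;2t_i,\cdot)T^{t_j-t_i}=\wt K_n(2t_i,\cdot;2t_j,\cdot)$; your reduction via $Tp_{r+1}^{(n)}=p_r^{(n)}$ (which is \eqref{Tp} of Lemma~\ref{lemma:Tpq}) only yields the first equality, and you should also record the companion identity $q_s^{(n)}T=q_{s+1}^{(n)}$ (\eqref{qT}), proved by the same generating-function/residue computation, to get the second; the paper itself instead invokes the transition-kernel representation (2.14) of \cite{Jo03}, from which both follow at once. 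Second, where you propose conjugating by $(\sqrt2+1)^{-x}$ to tame $T$ and obtain trace-class bounds, the paper disposes of these issues more simply by noting that the line ensemble is confined to a finite region, so the kernels involved are effectively supported on finitely many space-time points; your weighted-space workaround is plausible but would require verifying the claimed decay of $\wt K_n$, which the finite-support observation makes unnecessary.
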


\begin{proof}[Proof of Proposition~\ref{prop:Yrw}]
We apply Theorem 3.3 of~\cite{BCR13} on the space $X=\Z$ with the operators $\ol Q_{x_i}=\ol P_{x_i}$, $\mathcal W_{t_i,t_j}=T^{t_j-t_i}$ and with the kernel $K_{t_i}=\wt K_n(2t_i,\cdot;2t_i,\cdot)$.
What one needs to check is that
\begin{equation}
\mathcal W_{t_i,t_j}K_{t_j}=K_{t_i}\mathcal W_{t_i,t_j}=\wt K_n(2t_i,\cdot;2t_j,\cdot).
\end{equation}
This can be seen from the representation (2.14) in~\cite{Jo03} of the kernel $\wt K_n$, it follows directly from the semigroup property of the transitions.
The assumptions on finite trace class norm of certain operators that appear hold due to the fact that $\wt K_n$ is the kernel for a line ensemble confined to a finite region,
hence the kernel is supported on a finite set of space time points.
This proves \eqref{Ypathint}.

To show \eqref{pathintegralrw}, we apply Proposition~\ref{prop:rwconstruction} inductively for the kernel on the left-hand side and we obtain that for any $f\in\ell^1(\Z)$
\begin{equation}\begin{aligned}
&\ol P_{x_0}T^{t_1-t_0}\ol P_{x_1}\dots T^{t_k-t_{k-1}}\ol P_{x_k}f(x)\\
&\qquad=(\sqrt2+1)^{2(t_k-t_0)}\E\bigg((\sqrt2+1)^{-S_{t_k-t_0}}f(x+S_{t_k-t_0})\prod_{i=0}^k\id_{x+S_{t_i-t_0}\le x_i}\bigg)\\
&\qquad=(\sqrt2+1)^{2(t_k-t_0)}\sum_{y\in\Z}\P(S_{t_k-t_0}=y-x)(\sqrt2+1)^{-y+x}f(y)\\
&\qquad\qquad\times\P_{S_0=0,S_{t_k-t_0}=y-x}(x+S_{t_i-t_0}\le x_i\mbox{ for }i=0,\dots,k)
\end{aligned}\end{equation}
where we computed the expectation with respect to the endpoint of the random walk $S_{t_k-t_0}$ in the second equation.
\end{proof}

Next we prove large deviation bounds for the random walk $S_m$ defined in \eqref{rw}.
We introduce the rate function
\begin{multline}\label{defIx}
I(x)=(2-x)\log(\sqrt2+1)+(1-x)\log\bigg(\frac{1-x}{1+\sqrt{1+(1-x)^2}}\bigg)\\
+\log\bigg(\frac{x+\sqrt{1+(1-x)^2}}{2-x+\sqrt{1+(1-x)^2}}\bigg)
\end{multline}
for any $x<1$.

\begin{proposition}\label{prop:ldp}
Consider the random walk $S_m$ in \eqref{rw}.
Then for any $m>0$ integer and $x\in[0,1)$,
\begin{equation}\label{ldpsup}
\P\left(\sup_{0\le k\le m}S_k\ge xm\right)\le e^{-mI(x)}
\end{equation}
holds with the rate function $I(x)$ is given in \eqref{defIx}.
In particular,
\begin{equation}\label{ldpupper}
\P(S_m\ge xm)\le e^{-mI(x)}.
\end{equation}
Furthermore, there is an $\varepsilon>0$ for which
\begin{equation}\label{Ixsmallbound}
I(x)\ge\varepsilon x^2
\end{equation}
for any $x\ge0$.
As a consequence, the upper bounds on the right-hand sides of \eqref{ldpsup} and \eqref{ldpupper} can be replaced by $e^{-\varepsilon mx^2}$ if $x\ge0$.
\end{proposition}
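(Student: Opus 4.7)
The natural approach is the classical Cramér--Chernoff argument, combined with Doob's maximal inequality to get from $\P(S_m \ge xm)$ to $\P(\sup_{k\le m} S_k \ge xm)$, followed by an explicit computation of the Legendre transform of $\log M$, where $M(\lambda)=\E(e^{\lambda X})$.

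First I would compute $M(\lambda)$. Since $X=X_1+X_2$ with $X_1,X_2$ independent and with the distributions in \eqref{defX} (written in the $p=\sqrt2-1$ parametrization of \eqref{defrwsteps}), direct summation yields
\begin{equation*}
M(\lambda)=\frac{e^\lambda+p}{1+p}\cdot\frac{1-p}{1-pe^{-\lambda}},
\end{equation*}
valid for $\lambda>\log p$. Next, since $e^{\lambda S_k}$ is a non-negative submartingale for $\lambda\ge 0$, Doob's maximal inequality gives
\begin{equation*}
\P\Bigl(\sup_{0\le k\le m}S_k\ge xm\Bigr)=\P\Bigl(\sup_{0\le k\le m}e^{\lambda S_k}\ge e^{\lambda xm}\Bigr)\le e^{-\lambda xm}M(\lambda)^m,
\end{equation*}
which after optimizing over $\lambda\ge 0$ produces the Chernoff bound with rate function $I(x)=\sup_{\lambda\ge 0}[\lambda x-\log M(\lambda)]$, and in particular implies the weaker marginal bound \eqref{ldpupper}.

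The main work is then to show that this $I$ agrees with the closed form \eqref{defIx}. The stationarity condition $(\log M)'(\lambda)=x$ becomes, after setting $u=e^\lambda$, the quadratic
\begin{equation*}
(1-x)u^2-2pu-p^2(1-x)=0,
\end{equation*}
whose positive root is $u_*=p(1+\sqrt{1+(1-x)^2})/(1-x)$. Since $x\ge 0=\E(X)$ the optimal $\lambda_*=\log u_*$ is indeed non-negative, so the constraint $\lambda\ge 0$ in Doob is not binding. Plugging $\lambda_*$ back into $\lambda_* x-\log M(\lambda_*)$ and simplifying with the identities $1+p=\sqrt2$, $1-p=\sqrt2\,p$, $p(p+2)=1$ recovers exactly \eqref{defIx}; this algebra is mechanical but is the longest bookkeeping step.

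Finally, for the quadratic lower bound \eqref{Ixsmallbound}: on $[0,1)$ the rate function $I$ is $C^2$ with $I(0)=0$, $I'(0)=\lambda_*(0)=0$ (since $\E(X)=0$) and $I''(0)=1/\Var(X)=1/\sqrt2>0$ by the standard Legendre duality. Taylor expansion at the origin thus gives $I(x)\ge \varepsilon_1 x^2$ on some $[0,\delta]$, while on $[\delta,1]$ one has $\inf I(x)/x^2=:\varepsilon_2>0$ by continuity and strict positivity. For $x>1$ the bound is vacuous because $X\le 1$ almost surely, so $\sup_{k\le m}S_k\le m$ and the left-hand side of \eqref{ldpsup} is zero. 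Taking $\varepsilon=\min(\varepsilon_1,\varepsilon_2)$ completes the proof. The only genuine obstacle is the algebraic reduction in the middle step; conceptually everything else is routine exponential-martingale large deviation theory.
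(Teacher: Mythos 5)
Your proposal is correct and follows essentially the same route as the paper: Doob's maximal inequality for the exponential submartingale, the Chernoff/Legendre-transform optimization with the same explicit optimizer $u_*=p(1+\sqrt{1+(1-x)^2})/(1-x)$, and a Taylor expansion of $I$ at $0$ for the quadratic lower bound. The only cosmetic difference is in extending that bound away from the origin (you use continuity and positivity on $[\delta,1]$ and vacuousness for $x>1$, while the paper invokes convexity of $I$ and $I(x)=\infty$ for $x>1$), which changes nothing of substance.
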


\begin{proof}[Proof of Proposition~\ref{prop:ldp}]
Let $u>0$ be arbitrary.
Then the function $x\mapsto e^{ux}$ is increasing and convex, hence the process $e^{uS_m}$ is a non-negative submartingale.
As a consequence we have
\begin{equation}\label{submartineq}
\P\Big(\sup_{0\le k\le m}S_k\ge xm\Big)=\P\Big(\sup_{0\le k\le m}e^{uS_k}\ge e^{uxm}\Big)\le\frac{\E\left(e^{uS_m}\right)}{e^{uxm}},
\end{equation}
where we used the submartingale inequality in the last step.

The rest of the proof of \eqref{ldpsup} is a standard large deviation argument.
The expectation on the right-hand side of \eqref{submartineq} is the moment generating function of the random walk.
The moment generating function of the two types of steps given in \eqref{defrwsteps} are
\begin{equation}
\E(e^{uX_1})=\frac{\sqrt2-1+e^u}{\sqrt2},\qquad\E(e^{uX_2})=\frac{2-\sqrt2}{1-(\sqrt2-1)e^{-u}}
\end{equation}
for any $u>\log(\sqrt2-1)$.
Since \eqref{submartineq} holds for any $u>0$, to optimize in $u$, one computes the Legendre transform of the logarithmic moment generating function of one double step with the restriction $u>0$.
Hence \eqref{ldpsup} holds with
\begin{equation}
I(x)=\sup_{u>0}\left(ux-\log\left(\E\left(e^{uX_1}\right)\E\left(e^{uX_2}\right)\right)\right).
\end{equation}
The optimal $u$ without the positivity restriction is given by $u=\log((\sqrt2-1)(1+\sqrt{1+(1-x)^2})/(1-x))$ which turns out to be positive for all $x>0$.
Then \eqref{defIx} for $I(x)$ follows by computation.

To show the lower bound \eqref{Ixsmallbound}, one observes that Taylor expansion of $I(x)$ yields a quadratic lower bound in a small neighbourhood of $0$.
Since $I(x)$ is convex as a large deviation rate function, the lower bound can be extended to $(0,1]$ by choosing the coefficient of the quadratic term small enough.
Since $I(x)=\infty$ for $x>1$, \eqref{Ixsmallbound} follows for any $x\ge0$.
\end{proof}

\section{Reformulation with hitting times}\label{s:hitting}

We prove Theorem~\ref{thm:ReformulationKernel} in this section and we give a few examples where it can directly be used.
Let
\begin{equation}\label{defphi}
\phi_t(x,y)=\frac1{\sqrt{4\pi t}}e^{-\frac{(x-y)^2}{4t}}.
\end{equation}
be the Brownian transition kernel of diffusion coefficient $2$ which we use in the proof below.

\begin{proof}[Proof of Theorem~\ref{thm:ReformulationKernel}]
Let us define
\begin{equation}\label{eqA1}
R(u,v)= (e^{-(M-L) H})(u,v)\P_{b(L)=u-L^2,b(M)=v-M^2}\left(\exists t\in [L,M] \textrm{ s.t. }b(t)> g(t)-t^2\right).
\end{equation}
With the notations $A(x,y)=\Ai(x+y)$ and $P_0(x)=\id_{x\geq 0}$ we have $K_{\rm Ai}= A P_0 A$.
Inserting these definitions and using the identity $\det(\id-A B)=\det(\id-B A)$, we get that the left-hand side of \eqref{CQR} is equal to
\begin{equation}
\det(\id-K_{\rm Ai} + \Theta^g_{L,M} e^{(M-L) H} K_{\rm Ai})_{L^2(\R)} = \det(\id-A R e^{(M-L)H} A)_{L^2(\R_+)}
\end{equation}
where the $P_0$ is absorbed in the definition of the space (from $L^2(\R)$ to $L^2(\R_+)$).

Then we have obtained \eqref{ReformulationKernel} with the kernel on the right-hand side given by the conjugated kernel
\begin{equation}\label{conjugatedkernel}
K(x,y)=\frac{e^{Lx}}{e^{Ly}}(A R e^{(M-L)H} A)(x,y).
\end{equation}
We are left with identifying the kernel to be equal to \eqref{defKreformulated}.
The kernel in \eqref{conjugatedkernel} is given explicitly by
\begin{equation}\label{eqA6}
K(x,y)=\frac{e^{Lx}}{e^{Ly}}\int_\R\d u \int_\R\d v \Ai(x+u) R(u,v) \Ai(y+v) e^{-(M-L) y}.
\end{equation}
It remains to find a good expression for $K$ from which the $L\to -\infty$ and $M\to\infty$ limit are easily taken, see also Remark~\ref{remark3.2}. We can write
\begin{equation}\label{eqA8}
R(u,v)=\frac{e^{-(M-L)H}(u,v) \P_{b(L)=u-L^2}(\exists \tau\in [L,M] \textrm{ s.t. }b(\tau)> g(\tau)-\tau^2, b(M)=v-M^2)}{\phi_{M-L}(u-L^2,v-M^2)}
\end{equation}
with the notation \eqref{defphi}. The last term is the probability \emph{density} that the Brownian bridge starting from $b(L)=u-L^2$ reaches $b(M)=v-M^2$ and crosses curve $g(t)-t^2$ somewhere in between.
An explicit computation gives (c.f.\ $g=-\infty$ in Theorem~\ref{thm:CQR})
\begin{equation}\label{eqA9}
\frac{e^{-(M-L)H}(u,v)}{\phi_{M-L}(u-L^2,v-M^2)} = e^{(M^3-L^3)/3+Lu-Mv}.
\end{equation}
Next we decompose the probability of crossing depending on whether the inequality is not satisfied to the left and/or right of the origin.
Let us define
\begin{equation}\begin{aligned}
m_-^\alpha(u-L^2,\xi)&=\P_{b(L)=u-L^2}(\exists \tau\in [L,\alpha] \textrm{ s.t. }b(\tau)> g(\tau)-\tau^2, b(\alpha)=\xi),\\
m_+^\alpha(\xi,v-M^2)&=\P_{b(\alpha)=\xi}(\exists \tau\in [\alpha,M] \textrm{ s.t. }b(\tau)> g(\tau)-\tau^2, b(M)=v-M^2).
\end{aligned}\end{equation}
By inclusion--exclusion, we have
\begin{equation}\label{eqA11}\begin{aligned}
& \P_{b(-L)=u-L^2}(\exists \tau\in [L,M] \textrm{ s.t. }b(\tau)> g(\tau)-\tau^2, b(M)=v-M^2)\\
&\quad = \int_\R\d\xi m_-^\alpha(u-L^2,\xi) \phi_{M-\alpha}(\xi,v-M^2)+ \int_\R\d\xi \phi_{\alpha-L}(u-L^2,\xi)m_+^\alpha(\xi,v-M^2)\\
&\qquad -\int_\R\d\xi m_-^\alpha(u-L^2,\xi) m_+^\alpha(\xi,v-M^2).
\end{aligned}\end{equation}
Plugging in \eqref{eqA8}, \eqref{eqA9}, and \eqref{eqA11} into \eqref{eqA6} and by doing the change of variables $u\to u+L^2$ and $v\to v+M^2$ we obtain
\begin{multline}\label{eqA12}
K(x,y)=\int_\R\d u \int_\R\d v \Ai^{(L)}(x+u) \Ai^{(-M)}(y+v) \\
\times\left(\int\d\xi m_-^\alpha(u,\xi) \phi_{M-\alpha}(\xi,v)+ \int\d\xi \phi_{\alpha-L}(u,\xi)m_+^\alpha(\xi,v) -\int\d\xi m_-^\alpha(u,\xi) m_+^\alpha(\xi,v)\right).
\end{multline}

We can express now $m_-^\alpha(u,\xi)$ and $m_+^\alpha(\xi,v)$ by integrating over the hitting times and their positions as follows:
\begin{equation}\begin{aligned}
m_-^\alpha(u,\xi)&=\int_{L}^\alpha \int_\R \P(T_-^{\xi,\alpha}\in\d t, X_-^{\xi,\alpha}\in\d\zeta) \phi_{t-L}(u,\zeta),\\
m_+^\alpha(\xi,v)&=\int_{\alpha}^M \int_\R \P(T_+^{\xi,\alpha}\in\d t, X_+^{\xi,\alpha}\in\d\zeta) \phi_{M-t}(\zeta,v).
\end{aligned}\end{equation}
Noting that when $\P(T_+^{\xi,\alpha}\in\d t, X_+^{\xi,\alpha}\in\d\zeta)$ is a Dirac distribution at $t=\alpha,\zeta=\xi$, one recovers $m_+^\alpha(\xi,v)=\phi_{M-\alpha}(\xi,v)$.
Thus we can compute first the last term in \eqref{eqA12}, while the first two cases are recovered as special cases. Using Lemma~\ref{lem:app} below, we get the identities
\begin{equation}\label{eqA16}
\int_\R\d u \Ai^{(L)}(x+u)\phi_{t-L}(u,\zeta) = \Ai^{(t)}(x+\zeta),\quad \int_\R\d v \Ai^{(-M)}(y+v) \phi_{M-t}(\zeta,v) = \Ai^{(-t)}(y+\zeta).
\end{equation}
Integrating over $u$ and $v$ in \eqref{eqA12} using \eqref{eqA16} we get the claimed formula.
\end{proof}

\begin{lemma}\label{lem:app}
With the notation \eqref{defphi},
\begin{equation}\label{AiPhi}
\int_\R\d u \Ai^{(s)}(x+u)\phi_{t-s}(u,y)=\Ai^{(t)}(x+y).
\end{equation}
\end{lemma}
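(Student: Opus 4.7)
The plan is to prove \eqref{AiPhi} by a heat-semigroup argument rather than by a direct contour computation. First I would observe that $\Ai^{(s)}(z)$ satisfies the backward heat equation
\begin{equation*}
\partial_s \Ai^{(s)}(z) = \partial_z^2 \Ai^{(s)}(z),
\end{equation*}
which can be verified directly from the definition \eqref{eqAiryS} together with the Airy ODE $\Ai''(w) = w\Ai(w)$: both sides reduce to $e^{2s^3/3 + zs}\bigl[(2s^2+z)\Ai(s^2+z) + 2s\Ai'(s^2+z)\bigr]$. On the other hand, viewing $\phi_{t-s}(u,y)$ as a function of $s$ with $t,u,y$ fixed, I would use that $\phi_\sigma$ is the heat kernel of diffusion coefficient $2$ to get $\partial_s \phi_{t-s}(u,y) = -\partial_u^2 \phi_{t-s}(u,y)$.

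Next I would introduce
\begin{equation*}
f(s) := \int_\R \d u \, \Ai^{(s)}(x+u)\,\phi_{t-s}(u,y), \qquad s<t,
\end{equation*}
differentiate under the integral sign, and combine the two PDEs above to get
\begin{equation*}
f'(s) = \int_\R \d u \,\bigl[\partial_u^2 \Ai^{(s)}(x+u)\bigr]\phi_{t-s}(u,y) \;-\; \int_\R \d u \, \Ai^{(s)}(x+u)\bigl[\partial_u^2\phi_{t-s}(u,y)\bigr].
\end{equation*}
Integrating by parts twice makes the two terms cancel, so $f$ is constant on $(-\infty,t)$. Finally, I would send $s\to t^-$: the kernel $\phi_{t-s}(\cdot,y)$ is an approximate identity concentrating at $y$, and $(s,u)\mapsto\Ai^{(s)}(x+u)$ is continuous, so $f(s)\to\Ai^{(t)}(x+y)$. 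Constancy of $f$ then yields the identity.

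The only delicate point is justifying the differentiation under the integral and the integration by parts. This reduces to checking that the boundary terms at $u=\pm\infty$ vanish and that the dominated convergence hypotheses are met. Both follow from the fact that $\phi_{t-s}(u,y)$ and all its $u$-derivatives decay as Gaussians in $u$, while $\Ai^{(s)}(x+u)$ (as a function of $u$) grows at most as $e^{su}$ times a bounded Airy factor at $u\to-\infty$ and decays super-exponentially at $u\to+\infty$. The Gaussian in $\phi_{t-s}$ therefore dominates on both tails. This is the main obstacle in a formal write-up, but it is routine to execute; after it, the conclusion is immediate.
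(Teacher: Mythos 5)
Your argument is correct, but it takes a genuinely different route from the paper. The paper proves \eqref{AiPhi} in two lines: it writes $\Ai^{(t)}(x)=e^{2t^3/3+tx}\Ai(t^2+x)=\frac{1}{2\pi i}\int \d w\, e^{w^3/3+tw^2-xw}$ over a contour of Airy type, exchanges the $u$- and $w$-integrals, and evaluates the resulting Gaussian integral in $u$, which shifts $sw^2\mapsto tw^2$ and $xw\mapsto (x+y)w$ in the exponent; Fubini is justified by exactly the Gaussian-versus-Airy domination you invoke. Your proof instead exploits that $s\mapsto\Ai^{(s)}$ is a solution family of the backward heat equation $\partial_s\Ai^{(s)}=\partial_z^2\Ai^{(s)}$ (your verification via $\Ai''(w)=w\Ai(w)$ is right) and that $\phi_{t-s}$ solves the adjoint equation, so that $f(s)=\int\Ai^{(s)}(x+u)\phi_{t-s}(u,y)\,\d u$ is constant on $(-\infty,t)$ and is identified by the approximate-identity limit $s\to t^-$. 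This is more conceptual --- it exhibits the identity as the statement that the family $\Ai^{(s)}$ is propagated by the heat semigroup --- at the cost of the routine justifications you list (differentiation under the integral on compact $s$-intervals away from $t$, two integrations by parts, the $s\to t^-$ limit); note that the vanishing of the boundary terms also needs the analogous growth control on $\partial_u\Ai^{(s)}(x+u)$ and $\partial_u^2\Ai^{(s)}(x+u)$, which holds since $\Ai'$ likewise grows at most polynomially at $-\infty$ and decays super-exponentially at $+\infty$, so the Gaussian still dominates. The contour computation is shorter and yields the identity in one stroke; your semigroup argument is self-contained at the level of real-variable estimates and explains \emph{why} the identity holds, so either is acceptable, with the paper's version being the more economical for this application.
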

\begin{proof}
The identity is obtained by first expressing the Airy function as complex integral
\begin{equation}
\Ai^{(t)}(x)=\Ai(t^2+x)e^{2t^3/3+tx}=\frac{1}{2\pi i}\int_\langle \d w e^{w^3/3+tw^2-xw}
\end{equation}
and by computing the Gaussian integration in $u$.
\end{proof}

\subsubsection*{Examples}

\paragraph{The $L=\alpha=0$ case.}
Consider the special case $L=\alpha=0$. Then $M_-^\alpha(x,\xi)=\Ai(\xi+x)\id_{\xi>g(0)}$ and we are left with
\begin{equation}
K(x,y)=\int_{-\infty}^{g(0)}\d\xi \Ai(\xi+x) M_+^\alpha(\xi,y)+\int_{g(0)}^\infty\d\xi \Ai(\xi+x)\Ai(\xi+y).
\end{equation}

\paragraph{One-point barrier.}
Let $L=\alpha=0$ and $t_0>0$.
Consider $g(t_0)=a\in\R$ and $g(t)=\infty$ for $t\neq t_0$. Then
\begin{equation}
K(x,y)=\int_\R\d\xi \Ai(\xi+x) \int_{a-t_0^2}^\infty\d\zeta \phi_{t_0}(\xi,\zeta) \Ai^{(-t_0)}(\zeta+y).
\end{equation}
Using Lemma~\ref{lem:app} we get
\begin{equation}
K(x,y)=\int_{a-t_0^2}^\infty\d\zeta \Ai^{(t_0)}(x+\zeta)\Ai^{(-t_0)}(y+\zeta)=e^{t_0(x-y)} \int_{a}^\infty\d\zeta \Ai(x+\zeta)\Ai(y+\zeta).
\end{equation}
This gives
\begin{equation}
\det(\id-K)_{L^2(\R_+)}=\det(\id-K_{\rm Ai})_{L^2(a,\infty)}=F_{\rm GUE}(a).
\end{equation}

\paragraph{Flat cut-off.}
The flat cut-off in the original system corresponds to the choice $g(t)=R+t^2$ for some fixed cut-off value $R$.
In this case it is possible to take $L\to-\infty$ and $M\to\infty$ without problems, see also Remark~\ref{remark3.2}.
We get
\begin{equation}
M_+^\alpha(\xi,y)=\Ai^{(-\alpha)}(\xi+y)\id_{\xi\geq R} + \int_\alpha^\infty \P(T_+^\xi\in\d t) \Ai^{(-t)}(R+y)\id_{\xi<R}.
\end{equation}
The reflection principle gives, for $\xi<R$ and $t>\alpha$,
\begin{equation}
\P(T_+^\xi\in\d t) = (R-\xi) \frac{e^{-(R-\xi)^2/(4(t-\alpha))}}{\sqrt{4\pi (t-\alpha)^3}}\d t.
\end{equation}
Using the integral representation $\Ai^{(-t)}(x)=\Ai(t^2+x) e^{-2 t^3/3-t x}= \int_{\langle} \frac{\d w}{2\pi i} e^{w^3/3-t w^2-x w}$ we can first integrate explicitly over $t$ with the result
\begin{equation}\label{Mpmflat}
\begin{aligned}
M_+^\alpha(\xi,y)&=\Ai^{(-\alpha)}(\xi+y)\id_{\xi\geq R} + \Ai^{(-\alpha)}(2R+y-\xi)\id_{\xi<R},\\
M_-^\alpha(x,\xi)&=\Ai^{(\alpha)}(\xi+x)\id_{\xi\geq R} + \Ai^{(\alpha)}(2R+x-\xi)\id_{\xi<R}.
\end{aligned}
\end{equation}
Plugging in the formula of the kernel after some simple cancellations one obtains
\begin{equation}
K(x,y)=\int_\R\d\xi \Ai^{(\alpha)}(R+x-\xi)\Ai^{(-\alpha)}(R+y+\xi) = 2^{-1/3} \Ai\left(2^{-1/3}(2R+x+y)\right)
\end{equation}
where the last identity is slightly tricky (use the integral representation with vertical contours, once with one ordering so that the integral over $\xi\in\R_+$ is convergent,
once with the other order so that the integral over $\xi\in\R_-$ is convergent. Their sum can be computed then with the residue theorem leading to the identity).
Thus we have the well-known identity~\cite{Jo01,CQR11,FS05b}
\begin{equation}
\P({\cal A}_2(t)-t^2\leq R\textrm{ for all }t\in\R)=\det(\id-K)_{L^2(\R_+)} = F_{\rm GOE}(2^{2/3} R).
\end{equation}

\section{Left endpoint approach}\label{s:leftepa}

As already noticed in~\cite{CQR11}, but clearly pointed out in~\cite{QR19},
the representation in \eqref{CQR} is not adequate for taking $L\to-\infty$ and/or $M\to\infty$ as some of the terms taken individually do not have a limit.
In~\cite{QR19} they introduced a decomposition with respect to the position taken by the Brownian bridge at an intermediate time (e.g., at time $0$) and the kernel was rewritten in terms of hitting times.
This approach allowed to take the desired limits relatively directly.

In this paper we first show the convergence of probabilities about the top line of a discrete line ensemble to that of the Airy$_2$ process on the interval $[L,M]$ for fixed $L$ and $M$.
Then we prove that when $L\to-\infty$ and $M\to\infty$ we recover the problem for the full-line case.
One possibility would be to apply some probabilistic bounds in the spirit of~\cite{CLW16} or~\cite{CFS16}, by using the correspondence with the discrete time TASEP with parallel update.
However, in this paper we extend the convergence to the Airy$_2$ process to infinite intervals using the path integral formulation, see Section~\ref{s:extension}.
Surprisingly, it turns out that using the strategy of~\cite{QR19} with two hitting times generates issues in the asymptotic analysis of the backward part of the random walk,
therefore it is more suitable for our purposes to introduce hitting times of the random walks starting from the left endpoint of the interval only.

The second statement \eqref{belowcurverewriteAiry} of Theorem~\ref{thm:belowcurverewrite} below is a direct consequence of Theorem~\ref{thm:ReformulationKernel}.
We mention that a discrete analogue of Theorem~\ref{thm:ReformulationKernel} could be derived
for the probability on the left-hand side of \eqref{belowcurverewrite} in terms of hitting times for a random walk in two directions.
It is however used only in the case when the starting point of the random walks is the left endpoint of the interval when the formulas simplify.
For this reason we directly prove \eqref{belowcurverewrite} using a random walk which starts at the left endpoint.

Define the hitting time and position when the random walk is above the curve $g_n$ by
\begin{equation}\label{defTXhat}
\wh T^{u,m}_+=\min\{l\ge m:S_l>g_n(b_n^{-1}(2l))\mbox{ with }S_m=u\},\qquad\wh X^{u,m}_+=S_{\wh T^{u,m}_+}.
\end{equation}
Then let the discrete kernel be
\begin{equation}\label{defKnLM}
K_n^{L,M}(i,j)=\sum_{u\in\Z}\sum_{l=b_n(L)/2}^{b_n(M)/2}\sum_{v\in\Z}\P\left(\wh T^{u,\frac{b_n(L)}2}_+=l,\wh X^{u,m}_+=v\right)k_n^{u,l,v}(i,j)
\end{equation}
where
\begin{equation}\label{defk}
k_n^{u,l,v}(i,j)=(\sqrt2+1)^{i-j-b_n(L)+2l+u-v}\,q_{b_n(L)/2}^{(n)}(u+i)\,p_l^{(n)}\left(v+j\right).
\end{equation}
Let the continuous kernel be
\begin{equation}\label{defKLM}
K^{L,M}(x,y)=\int_\R\d\xi\Ai^{(L)}(x+\xi)\int_L^M\int_\R\P(T^{\xi,L}_+\in\d t,X^{\xi,L}_+\in\d\zeta)\Ai^{(-t)}(\zeta+y).
\end{equation}

\begin{theorem}\label{thm:belowcurverewrite}
Fix $L<M$.
Then
\begin{equation}\label{belowcurverewrite}
\P\left(Y_n(b_n(\tau))\le g_n(\tau)\mbox{ for }\tau\in[L,M]\right)=\det\left(\id-K_n^{L,M}\right)_{\ell^2(\Z_+)}
\end{equation}
with the kernel $K_n^{L,M}$ defined in \eqref{defKnLM}.
Furthermore,
\begin{equation}\label{belowcurverewriteAiry}
\P\left(\mathcal A_2(\tau)\le g(\tau)\mbox{ for }\tau\in[L,M]\right)=\det\left(\id-K^{L,M}\right)_{L^2(\R_+)}
\end{equation}
with the kernel $K^{L,M}$ given by \eqref{defKLM}.
\end{theorem}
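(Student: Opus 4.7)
The plan is to handle the two statements separately: \eqref{belowcurverewriteAiry} will be read off from Theorem~\ref{thm:ReformulationKernel} by specializing $\alpha=L$, whereas \eqref{belowcurverewrite} will be proved from scratch by combining Proposition~\ref{prop:Yrw} with a first-passage decomposition for the random walk of Proposition~\ref{prop:rwconstruction}.

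For \eqref{belowcurverewriteAiry} I would set $\alpha=L$ in formula \eqref{defKreformulated}. With this choice the interval $[L,\alpha]$ in the definition of $M_-^\alpha$ collapses to the single point $\{L\}$, so the only contribution is the atomic one; by the remark following \eqref{defM} the hitting measure reduces to $\delta_L(t)\delta_\xi(\zeta)\id_{\xi\geq g(L)-L^2}$, giving $M_-^L(x,\xi)=\Ai^{(L)}(\xi+x)\id_{\xi\geq g(L)-L^2}$. On $\{\xi\geq g(L)-L^2\}$ the same remark also yields $M_+^L(\xi,y)=\Ai^{(-L)}(\xi+y)$, so the first and third terms of \eqref{defKreformulated} cancel exactly; on the complement both vanish because $M_-^L$ does. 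Only the middle term remains, and it is precisely the definition \eqref{defKLM} of $K^{L,M}$.

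For \eqref{belowcurverewrite}, I would first (without loss of generality) take $b_n(L)$ and $b_n(M)$ to be even integers and apply Proposition~\ref{prop:Yrw} to the collection of inequalities at all the intermediate even times $2t_i\in\{b_n(L),b_n(L)+2,\dots,b_n(M)\}$ with thresholds $x_i=g_n(b_n^{-1}(2t_i))$ and $\mathcal{L}=\ol P_{x_0}T\ol P_{x_1}\cdots T\ol P_{x_k}$. The probability then equals the Fredholm determinant on $\ell^2(\Z)$ of $\id-\wt K_n(2t_0,\cdot;2t_0,\cdot)+\mathcal{L}\wt K_n(2t_k,\cdot;2t_0,\cdot)$. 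Using the backward propagation $Tp^{(n)}_{r+1}=p^{(n)}_r$ (readily checked from the contour formula \eqref{defp} after evaluating $\sum_y T(x,y)z^{-y}=z^{-x}(1+z)/(z(1-z))$ as an analytic function), this rewrites as $\id-(T^{t_k-t_0}-\mathcal{L})\wt K_n(2t_k,\cdot;2t_0,\cdot)$, in which $T^{t_k-t_0}-\mathcal{L}$ is the unnormalized kernel for walk paths that exceed the barrier at some intermediate step.

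A first-passage decomposition combined with the random-walk identity $T(x,y)=(\sqrt2+1)^{2-y+x}\P(X=y-x)$ from \eqref{Tkernel} gives
\begin{equation*}
(T^{t_k-t_0}-\mathcal{L})(u,y)=\sum_{l=t_0}^{t_k}\sum_v(\sqrt2+1)^{2(l-t_0)+u-v}\P\bigl(\wh T^{u,t_0}_+=l,\wh X^{u,t_0}_+=v\bigr)T^{t_k-l}(v,y).
\end{equation*}
Propagating $T^{t_k-l}$ through $\wt K_n$ once more gives $\wt K_n(2l,v;2t_0,y)$, which by \eqref{kernelrewrite} factorizes as $\sum_{j\geq0}p_l^{(n)}(v+j)q_{t_0}^{(n)}(y+j)$. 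The resulting kernel on $\ell^2(\Z)$ is therefore of the form $AB$ with $B(j,y)=q_{t_0}^{(n)}(y+j)$ and $A(u,j)$ absorbing the $(l,v)$-sum against $p_l^{(n)}(v+j)$; the cyclic identity $\det(\id-AB)=\det(\id-BA)$ moves the Fredholm determinant onto $\ell^2(\Z_+)$, and a diagonal conjugation $K(i,j)\mapsto(\sqrt2+1)^iK(i,j)(\sqrt2+1)^{-j}$ rearranges the $(\sqrt2+1)$ prefactors into exactly the exponent $i-j-b_n(L)+2l+u-v$ of \eqref{defk}, identifying the result with $K_n^{L,M}$. The main obstacle I anticipate is bookkeeping these $(\sqrt2+1)$ powers consistently across the three operator manipulations so that they combine to the exponent prescribed in \eqref{defk}; trace-class conditions are automatic since the Aztec diamond line ensemble is supported on a finite set, so all operators involved are of finite rank.
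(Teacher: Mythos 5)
Your proposal is correct and follows essentially the same route as the paper: \eqref{belowcurverewriteAiry} by specializing Theorem~\ref{thm:ReformulationKernel} at $\alpha=L$ with exactly the cancellation of the first and third terms of \eqref{defKreformulated}, and \eqref{belowcurverewrite} via Proposition~\ref{prop:Yrw}, the first-passage decomposition of the excess kernel $T^{t_k-t_0}-\mathcal L$ (the paper's $R_n$), the propagation relation $Tp_r^{(n)}=p_{r-1}^{(n)}$ (the paper's Lemma~\ref{lemma:Tpq}), the factorization \eqref{kernelrewrite} together with $\det(\id-AB)=\det(\id-BA)$ to pass to $\ell^2(\Z_+)$, and the diagonal $(\sqrt2+1)^{i-j}$ conjugation. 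The only deviation is the order of operations (you decompose over the hitting time before invoking cyclicity, the paper after), which is immaterial.
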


For the proof of Theorem~\ref{thm:belowcurverewrite} we will use the following properties as well.
\begin{lemma}\label{lemma:Tpq}
It holds
\begin{align}
(Tp_r^{(n)})(x)&=p_{r-1}^{(n)}(x)\label{Tp},\\
(q_s^{(n)}T)(y)&=q_{s+1}^{(n)}(y)\label{qT}.
\end{align}
\end{lemma}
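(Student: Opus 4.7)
\textbf{Proof plan for Lemma~\ref{lemma:Tpq}.} My strategy is to insert the contour-integral representations \eqref{defp} and \eqref{defq} of $p_r^{(n)}$ and $q_s^{(n)}$, pull the action of $T$ inside the contour integral, sum the resulting geometric series, and simplify the integrand algebraically. The key algebraic identities are
\begin{equation*}
\frac{1+z}{z(1+1/z)^r}=\frac{1}{(1+1/z)^{r-1}},\qquad \frac{(1+w)(1+1/w)^s}{w}=(1+1/w)^{s+1},
\end{equation*}
which are exactly what is needed to shift $r\to r-1$ (respectively, $s\to s+1$) in the integrands once the $T$-action has been folded in.

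For \eqref{qT}, I would begin with $(q_s^{(n)}T)(y)=2\sum_{x\ge y}q_s^{(n)}(x)+q_s^{(n)}(y-1)$ and substitute \eqref{defq}. Since $\Gamma_0$ can be taken as a small circle around $0$ of radius less than $1$, we have $|w|<1$ on the contour and $\sum_{x\ge y}w^x=w^y/(1-w)$ converges absolutely, so Fubini applies. The bracket in the integrand becomes $w^{y-1}(1+w)/(1-w)$; combining $(1-w)^{n-s}/(1-w)=(1-w)^{n-s-1}$ with the second algebraic identity above produces exactly the integrand of $q_{s+1}^{(n)}(y)$.

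For \eqref{Tp} the parallel formal computation starts from $(Tp_r^{(n)})(x)=2\sum_{y\le x}p_r^{(n)}(y)+p_r^{(n)}(x+1)$. After the formal geometric summation $\sum_{y\le x}z^{-y}=z^{-x}/(1-z)$, the bracket in the integrand becomes $2/(1-z)+1/z=(1+z)/(z(1-z))$, and combining $(1-z)^{n-r}(1-z)=(1-z)^{n-r+1}$ with the first algebraic identity converts the integrand of $(Tp_r^{(n)})(x)$ into the one defining $p_{r-1}^{(n)}(x)$.

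The nontrivial step is justifying the interchange of sum and integral in the $p$-case. Any simple loop $\Gamma_1$ encircling $z=1$ but not $z=0$ must cross the unit circle, so $\sum_{y\le x}z^{-y}$ does not converge pointwise on $\Gamma_1$; in fact the residue calculation at $z=1$ shows $p_r^{(n)}(y)$ is a polynomial in $y$ of degree $n-r-1$, so $\sum_{y\le x}p_r^{(n)}(y)$ diverges as $y\to-\infty$. The identity \eqref{Tp} must therefore be understood as an equality of the contour-integrand rational functions produced by the formal manipulation above. This is exactly the form in which the lemma is used in the proof of Proposition~\ref{prop:Yrw}: there $T$ is applied to the kernel $\wt K_n(2r,\cdot;2s,y)$, whose contour representation \eqref{defKntilde} packages the formal summation into a genuinely convergent integral, and the shift $r\to r-1$ on the integrand produced by our identity is precisely what realises the semigroup property $T^{t_j-t_i}K_{t_j}=\wt K_n(2t_i,\cdot;2t_j,\cdot)$. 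If one prefers, the same conclusion can be reached by a truncated Abel-type argument: replace $\sum_{y\le x}$ by $\sum_{y=-N}^{x}$, evaluate $\sum_{y=-N}^x z^{-y}=(z^{-x}-z^{N+1})/(1-z)$, carry the $z^{N+1}$ piece through, and note that its contribution is the exact counterterm needed to match the divergent part of the literal left-hand side.
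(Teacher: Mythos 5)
Your computation is correct, and at its core it coincides with the paper's proof: there the action of $T$ is channelled through the random-walk representation \eqref{Trepr} of Proposition~\ref{prop:rwconstruction}, and the geometric series you sum appears as the step generating function, $(\sqrt2+1)^2\,\E\big(((\sqrt2+1)z)^{-X}\big)=\frac{1+1/z}{1-z}$, which is exactly your bracket $\frac{1+z}{z(1-z)}$; the shifts $r\to r-1$ and $s\to s+1$ then follow from the same algebra in both arguments. For \eqref{qT} your proof is complete and rigorous, since $|w|<1$ on $\Gamma_0$ (equivalently $|s|>\sqrt2-1$, the domain of convergence of $\E(s^X)$), and in fact $q_s^{(n)}$ has finite support, so the sum over $x\ge y$ is even finite. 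Your caveat about \eqref{Tp} is well founded: since $p_r^{(n)}$ grows polynomially, the series $\sum_y T(x,y)p_r^{(n)}(y)$ diverges, and the paper's own justification of the exchange (absolute integrability ``since $X\le1$'') meets the same obstruction, because $\E\big(|(\sqrt2+1)z|^{-X}\big)<\infty$ forces $|z|<1$, which cannot hold on a closed loop around $z=1$; so reading \eqref{Tp} at the level of the contour integrand, or verifying it only inside the absolutely convergent compositions in which it is applied (where the $q$-factors supply the decay), is the sensible interpretation, and your truncated-sum identity makes this precise. One correction: the lemma is not invoked in Proposition~\ref{prop:Yrw} (whose semigroup property is taken from the representation (2.14) of~\cite{Jo03}), but in the proof of Theorem~\ref{thm:belowcurverewrite}, where \eqref{Tp}, in the inverse form $T^{-1}p_{r-1}^{(n)}=p_r^{(n)}$, is applied to $\pi_n^L$ of \eqref{defpirho} to obtain \eqref{rhoRTpi}; if you want a fully airtight write-up, that composed setting is where the identity should ultimately be checked.
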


\begin{proof}[Proof of Lemma~\ref{lemma:Tpq}]
By \eqref{Trepr} and definition \eqref{defp},
\begin{equation}
(Tp_r^{(n)})(x)=(\sqrt2+1)^2\,\E\left(\frac{-1}{2\pi i}\oint_{\Gamma_1}\frac{\d z}z\frac1{((\sqrt2+1)z)^{x+X}(1-z)^{n-r}(1+1/z)^r}\right)
\end{equation}
holds.
The order of the integration and the expectation can be exchanged,
because the integrand is absolutely integrable with respect to the product measure which can be seen by noting that $X\le1$.
By taking the expectation of the integrand above, one encounters the generating function of $X$.
For the random variable $X=X_1+X_2$ given by \eqref{defX}, the generating function is given by
\begin{equation}
\E\left(s^X\right)=(\sqrt2-1)\frac{s(s+\sqrt2-1)}{s-(\sqrt2-1)}
\end{equation}
for any $|s|>\sqrt2-1$.
After substituting $s=1/((\sqrt2+1)z)$, we get
\begin{equation}
\E\left(\frac1{((\sqrt2+1)z)^X}\right)=(\sqrt2+1)^{-2}\frac{1+1/z}{1-z}
\end{equation}
which proves \eqref{Tp}.
The proof of \eqref{qT} is similar after observing that \eqref{Tkernel} can be used to write
\begin{equation}\begin{aligned}
(q_s^{(n)}T)(y)&=\sum_{x\in\Z}q_s^{(n)}(x)(\sqrt2+1)^{2-y+x}\P(X=y-x)\\
&=(\sqrt2+1)^2\E\left((\sqrt2+1)^{-X}q_s^{(n)}(y-X)\right).
\end{aligned}\end{equation}
\end{proof}

\begin{proof}[Proof of Theorem~\ref{thm:belowcurverewrite}]
First we rephrase the condition on the left-hand side of \eqref{belowcurverewrite} in a way that the top line $Y_n$ remains below the given curve $g_n$ after each double step:
\begin{equation}\label{belowcurverewrite2}\begin{aligned}
&\P\left(Y_n(b_n(\tau))\le g_n(\tau)\mbox{ for }\tau\in[L,M]\right)\\
&\quad=\P\bigg(\bigcap_{l=b_n(L)/2}^{b_n(M)/2}\left\{Y_n(l)\le g_n\left(b_n^{-1}(2l)\right)\right\}\bigg)\\
&\quad=\det\Big(\id-\wt K_n(b_n(L),\cdot;b_n(L),\cdot)\\
&\quad \hspace{4em}+\ol P_{x_{b_n(L)/2}}T\ol P_{x_{b_n(L)/2+1}}\cdots T\ol P_{x_{b_n(M)/2}}\wt K_n(b_n(M),\cdot;b_n(L),\cdot)\Big)_{\ell^2(\Z)}\\
&\quad=\det\left(\id-R_nT^{-\frac{b_n(M)-b_n(L)}2}\wt K_n(b_n(L),\cdot;b_n(L),\cdot)\right)_{\ell^2(\Z)}
\end{aligned}\end{equation}
where $x_l=g_n(b_n^{-1}(2l))$ for $l=b_n(L)/2,b_n(L)/2+1,\dots,b_n(M)/2$ and
\begin{multline}\label{defRn}
R_n(i,j)=T^{\frac{b_n(M)-b_n(L)}2}(i,j)\\
\times\P_{S_{\frac{b_n(L)}2}=i,S_{\frac{b_n(M)}2}=j}\left(\exists l\in\left\{\tfrac12 b_n(L),\tfrac12 b_n(L)+1,\dots,\tfrac12 b_n(M)\right\}:S_l>g_n\left(b_n^{-1}(2l)\right)\right)
\end{multline}
is the kernel on the right-hand side.
In the last two equalities in \eqref{belowcurverewrite2}, we used Proposition~\ref{prop:Yrw}.

By \eqref{kernelrewrite}, we can write $\wt K_n(b_n(L),x;b_n(L),y)=(\pi_n^LP_0\rho_n^L)(x,y)$ where
\begin{equation}\label{defpirho}
\pi_n^L(x,j)=p_{\frac{b_n(L)}2}^{(n)}(x+j),\qquad\rho_n^L(j,y)=q_{\frac{b_n(L)}2}^{(n)}(y+j).
\end{equation}
Using this and the determinant identity $\det(\id+AB)=\det(\id+BA)$, we obtain
\begin{equation}\label{belowcurverewrite3}
\det\left(\id-R_nT^{-\frac{b_n(M)-b_n(L)}2}\wt K_n(b_n(L),\cdot;b_n(L),\cdot)\right)_{\ell^2(\Z)}
\hspace{-1em}=\det\left(\id-\rho_n^LR_nT^{-\frac{b_n(M)-b_n(L)}2}\pi_n^L\right)_{\ell^2(\Z_+)}\hspace{-1em}.
\end{equation}

Next we rewrite \eqref{defRn} using \eqref{Tkernel} and by decomposing the crossing event according to the first hitting time $\wh T^{i,\frac{b_n(L)}2}_+$:
\begin{equation}\begin{aligned}
&R_n(i,j)\\
&\quad=(\sqrt2+1)^{b_n(M)-b_n(L)-j+i}\\
&\qquad\times\P_{S_{\frac{b_n(L)}2}=i}\left(\left\{\exists l\in\left\{\tfrac12 b_n(L),\dots,\tfrac12 b_n(M)\right\}:S_l>g_n\left(b_n^{-1}(2l)\right)\right\}\cap\{S_{\frac{b_n(M)}2}=j\}\right)\\
&\quad=(\sqrt2+1)^{b_n(M)-b_n(L)-j+i}\sum_{l=b_n(L)/2}^{b_n(M)/2}\P\Big(\wh T^{i,\frac{b_n(L)}2}_+=l,\wh X^{i,\frac{b_n(L)}2}_+=v\Big)\P_{S_l=v}\Big(S_{\frac{b_n(M)}2}=j\Big)\\
&\quad=(\sqrt2+1)^{b_n(M)-b_n(L)-j+i}\\
&\qquad\times\sum_{l=b_n(L)/2}^{b_n(M)/2}\P\Big(\wh T^{i,\frac{b_n(L)}2}_+=l,\wh X^{i,\frac{b_n(L)}2}_+=v\Big)(\sqrt2+1)^{-b_n(M)+2l-v+j}T^{\frac{b_n(M)}2-l}\left(v,j\right).
\end{aligned}\end{equation}
By applying \eqref{Tp} to $\pi_n^L$ given in \eqref{defpirho}, one gets that the conjugated kernel
\begin{equation}\label{rhoRTpi}
(\sqrt2+1)^{i-j}\rho_n^LR_nT^{-\frac{b_n(M)-b_n(L)}2}\pi_n^L(i,j)=K_n^{L,M}(i,j).
\end{equation}
Putting together \eqref{belowcurverewrite2}, \eqref{belowcurverewrite3} and \eqref{rhoRTpi} we obtain \eqref{belowcurverewrite}.

For proving \eqref{belowcurverewriteAiry}, we specialize Theorem~\ref{thm:ReformulationKernel} for the case when $\alpha=L$.
In that case by \eqref{defM},
\begin{equation}
M^L_-(x,\xi)=\Ai^{(L)}(x+\xi)\id_{\xi>g(L)-L^2}
\end{equation}
and $M^L_+(\xi,y)=\Ai^{(-L)}(\xi+y)$ if $\xi>g(L)-L^2$.
Therefore, the first and third integral on the right-hand side of \eqref{defKreformulated} cancel out and the second term is equal to the kernel $K^{L,M}$ defined in \eqref{defKLM} which completes the proof.
\end{proof}

\section{Asymptotic statements}\label{s:asympstate}

In this section, we consider the asymptotics when the size of the Aztec diamond $n\to\infty$ and prove the finite interval version of Theorem~\ref{thm:YbelowacurveR}.

\begin{theorem}\label{thm:Ybelowacurve}
For any $L<M$ fixed and any function $g$ with the assumptions of Theorem~\ref{thm:Xbelowh},
\begin{equation}\label{Ybelowacurve}
\P\left(Y_n(b_n(\tau))\le g_n(\tau)\mbox{ for }\tau\in[L,M]\right)\to\P\left(\mathcal A_2(\tau)\le g(\tau)\mbox{ for }\tau\in[L,M]\right)
\end{equation}
as $n\to\infty$ where $g_n$ is given by \eqref{defbtaugn} and $\mathcal A_2$ is the Airy$_2$ process.
\end{theorem}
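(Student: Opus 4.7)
The plan is to start from Theorem~\ref{thm:belowcurverewrite}, which already gives Fredholm determinant representations for both sides of \eqref{Ybelowacurve}: the left-hand side equals $\det(\id-K_n^{L,M})_{\ell^2(\Z_+)}$ with $K_n^{L,M}$ defined in \eqref{defKnLM}, and the right-hand side equals $\det(\id-K^{L,M})_{L^2(\R_+)}$ with $K^{L,M}$ defined in \eqref{defKLM}. Thus the whole problem reduces to showing that, under the conjugation by $2^{-5/6}n^{1/3}$ and the rescalings already fixed in \eqref{defbtaugn}, the rescaled discrete kernel converges to the continuous one in a way strong enough to pass to the limit in the Fredholm determinants. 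Concretely, identifying $i \leftrightarrow 2^{-5/6}n^{1/3} x$ and $j \leftrightarrow 2^{-5/6}n^{1/3} y$, I would want
\begin{equation*}
2^{-5/6} n^{1/3}\, K_n^{L,M}\bigl(2^{-5/6}n^{1/3}x,\, 2^{-5/6}n^{1/3}y\bigr) \lto K^{L,M}(x,y)
\end{equation*}
pointwise on $\R_+\times\R_+$, together with dominating bounds that give trace class convergence.

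To establish the pointwise convergence I would separately control the three ingredients making up $K_n^{L,M}$ in \eqref{defKnLM}--\eqref{defk}. First, a standard saddle point analysis of the contour integrals \eqref{defp}--\eqref{defq} (the Krawtchouk-type asymptotics already exploited in~\cite{Jo03}) gives, after the same rescaling and multiplication by the conjugating prefactor $(\sqrt 2+1)^{\cdots}$,
\begin{equation*}
2^{-5/6}n^{1/3}\, p_l^{(n)}\bigl(v+2^{-5/6}n^{1/3}y\bigr) \lto \Ai^{(-t)}(\zeta+y), \qquad 2^{-5/6}n^{1/3}\, q_{b_n(L)/2}^{(n)}\bigl(u+2^{-5/6}n^{1/3}x\bigr) \lto \Ai^{(L)}(\xi+x),
\end{equation*}
when $2l = b_n(t)$, $u = 2^{-5/6}n^{1/3}\xi$, $v = 2^{-5/6}n^{1/3}\zeta$. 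Second, by Donsker's invariance principle applied to $S_m$ (whose step has mean $0$ and variance $\sqrt 2$ by Proposition~\ref{prop:rwconstruction}, yielding the diffusion coefficient $2$ matching \eqref{defphi} after the $2^{-1/6}$ time rescaling in $b_n$), the joint law of the rescaled hitting pair $(\wh T_+^{u,b_n(L)/2}, \wh X_+^{u,b_n(L)/2})$ converges to the joint law of $(T_+^{\xi,L}, X_+^{\xi,L})$ defined in \eqref{defTX+}. Riemann-sum combination of these two convergences turns the triple sum \eqref{defKnLM} into the double integral \eqref{defKLM}.

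The more delicate part is producing $n$-uniform bounds that upgrade the pointwise limit to a Fredholm determinant limit. Here I would use Proposition~\ref{prop:ldp}: the bound $I(x)\ge \varepsilon x^2$ gives sub-Gaussian control on the hitting probabilities $\P(\wh T_+^{u,b_n(L)/2}=l, \wh X_+^{u,b_n(L)/2}=v)$, while the saddle point expansion of $p_l^{(n)}$ and $q_s^{(n)}$ comes with Airy-type exponential decay uniform in $n$. Combining these I would show that, after conjugation,
\begin{equation*}
\bigl|K_n^{L,M}(i,j)\bigr| \le C\, e^{-c(x+y)}
\end{equation*}
for $i=2^{-5/6}n^{1/3}x\ge 0$, $j = 2^{-5/6}n^{1/3}y \ge 0$, uniformly in $n$. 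This yields a trace-norm dominating function, so together with pointwise convergence the Fredholm determinants converge, giving \eqref{Ybelowacurve}.

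The main obstacle I expect is the control of the hitting-time ingredient: because $g$ may be $+\infty$ on parts of $[L,M]$ and only piecewise differentiable, $g_n$ is not a smooth curve and one must be careful that the \emph{discrete} hitting event $\{S_l > g_n(b_n^{-1}(2l))\}$ converges to the \emph{continuous} hitting event $\{b(t) > g(t)-t^2\}$ with the correct joint law for the hitting time and position. In particular, at jump points of $g$ one needs that the walk hits strictly above $g_n$ with a probability that survives rescaling, which is the discrete counterpart of the phenomenon discussed in Remark~\ref{remark3.2}. I would expect this to be handled by splitting $[L,M]$ into the intervals on which $g$ is differentiable, applying the invariance principle on each piece, and using a separate large-deviation estimate from Proposition~\ref{prop:ldp} to show that contributions from walks making anomalously large single-step jumps across a discontinuity vanish in the limit.
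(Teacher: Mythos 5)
Your proposal follows essentially the same route as the paper's proof: both start from Theorem~\ref{thm:belowcurverewrite}, rescale the kernel by $2^{-5/6}n^{1/3}$, and combine pointwise convergence (saddle-point asymptotics of $p^{(n)},q^{(n)}$ and weak convergence of the rescaled hitting time and position via Donsker, i.e.\ Propositions~\ref{prop:pqconv} and~\ref{prop:Hconv}) with $n$-uniform exponential decay bounds built from the large-deviation estimate and steepest-descent bounds (Propositions~\ref{prop:pbound}--\ref{prop:improvedbound}, Lemma~\ref{lemma:intkerneldecay}) to pass to the limit of the Fredholm determinants. The only cosmetic differences are that the paper uses dominated convergence on the Fredholm series rather than your trace-norm domination framing, and it treats the discontinuities of $g$ inside the proof of Proposition~\ref{prop:Hconv} via a continuity-set (Portmanteau/Cameron--Martin) argument rather than a separate large-deviation step.
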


For the proof, we use the representations given in Theorem~\ref{thm:belowcurverewrite} by the left endpoint approach.
In particular, we show that the Fredholm determinant of $K_n^{L,M}$ on the right-hand side of \eqref{belowcurverewrite} in Theorem~\ref{thm:belowcurverewrite}
converges to that of $K^{L,M}$ \eqref{belowcurverewriteAiry}.
The convergence of Fredholm determinants is based on the following series of propositions which we prove in Section~\ref{s:asympproofs}. To simplify the notations, define
\begin{equation}
\begin{aligned}
P_l^{(n)}(v+j)&=2^{-5/6}n^{1/3}2^{n/2}(\sqrt2+1)^{-j-2^{-5/6}\zeta n^{1/3}+2^{-1/6}tn^{2/3}}p_l^{(n)}\left(v+j\right),\\
Q_{L}^{(n)}(u+i)&=2^{-5/6}n^{1/3}2^{-n/2}(\sqrt2+1)^{i+2^{-5/6}\xi n^{1/3}-2^{-1/6}Ln^{2/3}}q_{\frac{b_n(L)}2}^{(n)}(u+i).
\end{aligned}
\end{equation}

\begin{proposition}\label{prop:pqconv}
Under the scaling
\begin{equation}\label{scaling}\begin{gathered}
l=n\left(\frac12+\frac1{2\sqrt2}\right)+2^{-7/6}tn^{2/3},\qquad i=2^{-5/6}xn^{1/3},\qquad j=2^{-5/6}yn^{1/3},\\
u=\frac n{\sqrt2}+2^{-5/6}\xi n^{1/3},\qquad v=\frac n{\sqrt2}+2^{-5/6}\zeta n^{1/3},
\end{gathered}\end{equation}
the two convergence statements
\begin{align}
P_l^{(n)}(v+j)&\to\Ai^{(-t)}(\zeta+y),\label{pconv}\\
Q_{L}^{(n)}(u+i)&\to\Ai^{(L)}(x+\xi)\label{qconv}
\end{align}
hold uniformly on compact intervals in $\zeta+y$ and in $x+\xi$ respectively as $n\to\infty$.
\end{proposition}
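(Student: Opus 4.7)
Both $p_l^{(n)}(v+j)$ and $q_{b_n(L)/2}^{(n)}(u+i)$ are contour integrals of Krawtchouk type, and the strategy is a classical steepest descent/saddle point analysis at a double critical point, essentially parallel to the analysis in~\cite{Jo03}. Writing the integrand of $p_l^{(n)}(v+j)$ as $e^{nG_n^{(p)}(z)}/z$ and separating the leading and subleading parts under the scaling \eqref{scaling}, we get
\begin{equation*}
G_n^{(p)}(z) = G(z) + \frac{2^{-7/6}t}{n^{1/3}}\bigl(\log(1-z)-\log(1+1/z)\bigr) - \frac{2^{-5/6}(\zeta+y)}{n^{2/3}}\log z,
\end{equation*}
where
\begin{equation*}
G(z)=-\tfrac{1}{\sqrt{2}}\log z - \bigl(\tfrac12-\tfrac1{2\sqrt 2}\bigr)\log(1-z) - \bigl(\tfrac12+\tfrac1{2\sqrt 2}\bigr)\log(1+1/z).
\end{equation*}
A direct computation shows that $z_c=\sqrt2-1$ is a double critical point of $G$: $G'(z_c)=G''(z_c)=0$ while $G'''(z_c)\neq 0$. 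An analogous decomposition applies to $q_{b_n(L)/2}^{(n)}(u+i)$, with the sign of the $n^{1/3}$-scale perturbation flipped (giving rise to $\Ai^{(L)}$ instead of $\Ai^{(-t)}$).

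The second step is to deform $\Gamma_1$ (respectively $\Gamma_0$) to a simple closed curve which coincides, near $z_c$, with the steepest-descent directions of $G$ emanating from $z_c$ (the three $2\pi/3$-rotated rays determined by the sign of $G'''(z_c)$), and to verify that $\Re(G(z)-G(z_c))$ attains its contour maximum only at $z_c$. This is a standard convexity-type check along the contour; the tail contribution from $|z-z_c|\ge \delta$ is then $O(e^{-c n \delta^3})$ and is negligible. The factor $2^{\pm n/2}$ in the normalization of $P_l^{(n)}$ and $Q_L^{(n)}$ is precisely the one needed to cancel $e^{n G(z_c)}$, since at $z_c$ one has $1-z_c=\sqrt 2 z_c$ and $1+1/z_c=\sqrt 2/z_c$, which makes $G(z_c)$ an explicit multiple of $\log 2$ (together with $\log(\sqrt 2+1)$ contributions that will be absorbed at the next order).

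The third step is the local analysis in a neighborhood of $z_c$ of size $n^{-1/3+\epsilon}$. Setting $z = z_c + 2^{5/6} z_c w \, n^{-1/3}$ (the constant $2^{5/6}z_c$ is chosen so that the cubic term becomes $w^3/3$; this is exactly the standard Aztec-diamond edge scaling), the integrand becomes
\begin{equation*}
\exp\Bigl( \tfrac{w^3}{3} + t w^2 - (\zeta + y) w \Bigr)\,\bigl(1+o(1)\bigr),
\end{equation*}
after the $n^{2/3}$- and $n^{1/3}$-order constants in $nG_n^{(p)}(z_c)$ are absorbed by the explicit $(\sqrt 2+1)^{\pm}$ prefactors in the definitions of $P_l^{(n)}$ and $Q_L^{(n)}$. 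Here the $tw^2$ arises because the quadratic term of $\log(1-z)-\log(1+1/z)$ at $z_c$ is nonzero while its linear term vanishes, and the factor $(\zeta+y)w$ comes from expanding $\log z$ to first order. The $n^{-1/3}$ in the prefactor of $P_l^{(n)},Q_L^{(n)}$ is the Jacobian of the change of variables, and closing the contour to the standard cubic-root directions yields exactly the representation $\frac{1}{2\pi i}\int e^{w^3/3 + t w^2 - (\zeta+y) w}\,dw = \Ai^{(-t)}(\zeta+y)$ (and correspondingly $\Ai^{(L)}(x+\xi)$ for $Q_L^{(n)}$) as desired.

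\textbf{Main obstacle.} The bookkeeping step is the only delicate point: one must verify that the $n^{2/3}$- and $n^{1/3}$-order constants in $nG_n^{(p)}(z_c)$ (and the corresponding expression for $q$) are matched \emph{exactly} by the multiplicative factors $2^{\pm n/2}(\sqrt 2+1)^{\pm \cdots}$ built into the definitions of $P_l^{(n)}$ and $Q_L^{(n)}$; this is what ensures that no divergent prefactor survives the $n\to\infty$ limit. The uniformity of the convergence on compact sets in $\zeta+y$ and $x+\xi$ follows from the fact that these parameters enter only through the linear-in-$w$ term of the local expansion, whose contribution is analytic and uniformly bounded together with its derivatives in any compact range.
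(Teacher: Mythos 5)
Your proposal follows essentially the same route as the paper's proof: rewrite $P_l^{(n)}$ and $Q_L^{(n)}$ as contour integrals with a phase having a double critical point at $\sqrt2-1$, deform $\Gamma_1,\Gamma_0$ to steepest-descent contours (local rays plus circular arcs with a steep-descent check, tails of order $e^{-cn\delta^3}$), and rescale $z-(\sqrt2-1)$ on the $n^{-1/3}$ scale to produce the Airy integrals, with the $2^{\pm n/2}(\sqrt2+1)^{\pm\cdots}$ prefactors cancelling the constant, $n^{2/3}$- and $n^{1/3}$-order terms exactly as you indicate. One bookkeeping correction: since the second derivative of $\log(1-z)-\log(1+1/z)$ at $\sqrt2-1$ equals $-\sqrt2(\sqrt2+1)^2<0$, the local exponent is $w^3/3 - t w^2-(\zeta+y)w$, and with the convention \eqref{eqAiryS} one has $\frac{1}{2\pi i}\int e^{w^3/3 - t w^2 - xw}\,\d w=\Ai^{(-t)}(x)$; you wrote $+tw^2$ together with a correspondingly misstated Airy identity, two sign slips that cancel and should simply be fixed.
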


\begin{proposition}\label{prop:pbound}
Let $L<M$ be fixed and consider the scaling \eqref{scaling} of the variables.
Then there are $c>0$ and $C\in\R$ such that for all $n$ large enough the bound
\begin{equation}\label{pbound}
\big|P_l^{(n)}(v+j)\big|\le Ce^{-c(y+\zeta)}
\end{equation}
holds for all $y\ge0$ and $\zeta$ bounded from below uniformly in $l\in[b_n(L)/2,b_n(M)/2]$.
\end{proposition}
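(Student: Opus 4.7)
The plan is to carry out a quantitative steepest descent analysis of the contour integral defining $p_l^{(n)}(v+j)$, tracking not only the Airy limit appearing in Proposition~\ref{prop:pqconv} but also the exponential decay in $y+\zeta$. I would first substitute the scaling \eqref{scaling} into \eqref{defp} and absorb all $n$-dependent prefactors of $P_l^{(n)}$ into a single exponent, obtaining
\begin{equation*}
P_l^{(n)}(v+j)=\frac{-2^{-5/6}n^{1/3}}{2\pi\I}\oint_{\Gamma_1}\frac{\d z}{z}\,\exp\bigl[nF(z)+n^{2/3}t\,G(z)-2^{-5/6}(y+\zeta)n^{1/3}\log\bigl((\sqrt2+1)z\bigr)\bigr]
\end{equation*}
up to lower-order factors, where $F$ is the leading exponent renormalised so that $F(z_c)=0$ (the $2^{n/2}$ and $(\sqrt2+1)^{2^{-1/6}tn^{2/3}}$ prefactors in $P_l^{(n)}$ are designed precisely to achieve this cancellation), and $G(z)=2^{-7/6}[\log(1-z)-\log(1+1/z)]+\text{const}$. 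A direct computation, using $(\sqrt2+1)z_c=1$, shows that $z_c=\sqrt2-1$ is a double saddle of $F$, i.e.\ $F'(z_c)=F''(z_c)=0$ with $F'''(z_c)\neq0$, which is the Airy scaling regime underlying Proposition~\ref{prop:pqconv}.

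For $y+\zeta\ge0$, the key step is to deform $\Gamma_1$ to a steepest descent contour passing through the shifted point $z_\ast=z_c(1+\kappa n^{-1/3})$ for a fixed $\kappa>0$, leaving $z_\ast$ at angles $\pm\pi/3$ along the rays where $\operatorname{Re}F$ decreases cubically. The outward shift produces the required decay: on such a contour
\begin{equation*}
\bigl|\exp\bigl[-2^{-5/6}(y+\zeta)n^{1/3}\log((\sqrt2+1)z)\bigr]\bigr|\le(1+\kappa n^{-1/3})^{-2^{-5/6}(y+\zeta)n^{1/3}}\le e^{-c(y+\zeta)}
\end{equation*}
with $c=2^{-5/6}\kappa/2$ for $n$ large enough. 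For $y+\zeta<0$ (but bounded below, as assumed), the factor $e^{-c(y+\zeta)}$ is bounded away from zero, and the claim reduces to a uniform upper bound on $|P_l^{(n)}|$ which follows directly from the pointwise convergence in Proposition~\ref{prop:pqconv} together with the boundedness of $\Ai^{(-t)}$ on compact sets.

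Next I would split the deformed contour into a local piece of $O(1)$ diameter around $z_\ast$ and a non-local piece. On the non-local piece, the double-saddle geometry gives $\operatorname{Re}(nF(z))\le-c'n$ uniformly for some $c'>0$; since $G$ is bounded and $y+\zeta=O(n^{2/3})$ by the a priori support constraint $v+j\le 2n$ of the Aztec model, this contribution is at most $e^{-c''n}$ and hence dominated by $e^{-c(y+\zeta)}$. On the local piece, the change of variables $z=z_c(1+(\kappa+\I\sqrt3 s)n^{-1/3})$ reduces the integrand to a bounded analytic function of $s$ times
\begin{equation*}
\exp\bigl[\tfrac16F'''(z_c)z_c^3(\kappa+\I\sqrt3 s)^3+t\,G'(z_c)z_c(\kappa+\I\sqrt3 s)+O(n^{-1/3})\bigr],
\end{equation*}
which is absolutely integrable in $s\in\R$ by the cubic real part in $s$ and is uniformly bounded in $t\in[L,M]$.

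The main obstacle is the uniformity of all constants in $l\in[b_n(L)/2,b_n(M)/2]$ and in $(y,\zeta)$ across the admissible range. One must verify that (i) the steepest descent contour and the choice of $\kappa$ can be made simultaneously for all such $l$ and all admissible $(y,\zeta)$, (ii) the non-local estimate $\operatorname{Re}(F(z))\le-c'$ holds uniformly, and (iii) the $n^{2/3}$-order, $t$-dependent perturbation $n^{2/3}tG(z)$ does not spoil the leading Airy geometry near $z_c$. Since the relevant geometric quantities depend continuously on $t$ and $t$ ranges over the compact interval $[L,M]$, $\kappa$ can be chosen large enough (depending only on $L$ and $M$) so that the local cubic $\tfrac16F'''(z_c)z_c^3\kappa^3$ term dominates the perturbation $t\,G'(z_c)z_c\kappa$, preserving the bound uniformly.
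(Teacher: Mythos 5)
Your route is genuinely different from the paper's and is in principle workable: the paper absorbs $y+\zeta$ into the leading exponent by setting $y+\zeta=2^{5/6}rn^{2/3}$, moves the contour to the circle centred at $1$ through the $r$-dependent point $\alpha=\sqrt2-1+\sqrt{\min(r,\varepsilon)}$ (cf.\ \eqref{criticalpoints} and Lemma~\ref{lemma:steep}), and gets decay of order $e^{-\mathrm{const}\,(y+\zeta)^{3/2}}$ resp.\ $e^{-\mathrm{const}\,n^{1/3}(y+\zeta)}$, while you shift by a fixed microscopic amount $\kappa n^{-1/3}$ and harvest $e^{-c(y+\zeta)}$ directly from the factor $|(\sqrt2+1)z|^{-2^{-5/6}(y+\zeta)n^{1/3}}$; for the stated exponential bound that is enough. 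The main gap is your treatment of the non-local piece. The claimed ``a priori support constraint $v+j\le 2n$'' is false: $q_s^{(n)}$ has finite support (it is a coefficient extraction at $w=0$), but $p_l^{(n)}$ is a residue at $z=1$ and does not vanish for large arguments, and the bound is needed for \emph{all} $y\ge0$ (the Fredholm determinant runs over $\ell^2(\Z_+)$, and in \eqref{defKnLM} one sums over all $v\in\Z$), so you may not assume $y+\zeta=\O(n^{2/3})$. Consequently ``non-local contribution $\le e^{-c''n}$, hence dominated by $e^{-c(y+\zeta)}$'' breaks down once $y+\zeta\gg n$; and even for $y+\zeta\asymp n^{2/3}$ you must rule out that the closing piece enters $|z|<\sqrt2-1$, where the discarded factor is itself exponentially large in $n$. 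The repair stays within your scheme: close the contour by the circle centred at $1$ of radius $1-z_\ast<2-\sqrt2$, on which $|z|\ge z_\ast$ so that $|(\sqrt2+1)z|\ge1+\kappa n^{-1/3}$ holds \emph{pointwise on the whole contour}; then the $e^{-c(y+\zeta)}$ factor is kept everywhere, the steep-descent property of Lemma~\ref{lemma:steep} (with $r=0$) supplies the extra $e^{-c'n}$ off the saddle, and no comparison between $e^{-c''n}$ and $e^{-c(y+\zeta)}$ is needed.

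The second issue is your bookkeeping of the $t$-dependent perturbation. One has $G(z_c)=G'(z_c)=0$ (this is precisely why this term sits at order $n^{2/3}$ and produces the $-tZ^2$ term in \eqref{pZint}, cf.\ \eqref{gTaylor}); so the term $t\,G'(z_c)z_c(\kappa+\I\sqrt3 s)$ you check against the cubic is identically zero, and the actual perturbation, $\tfrac12 tG''(z_c)(z-z_c)^2$, is never addressed. Its real part grows quadratically in $s$ on your contour for one sign of $t$, so you must either use the $\pm\pi/3$ rays (where the cubic real part genuinely decays cubically and beats any quadratic, uniformly for $t\in[L,M]$) or take $\kappa$ large depending on $M$; note also that along the line $z_c(1+(\kappa+\I\sqrt3 s)n^{-1/3})$ the cubic's real part is only quadratic in $s$, so ``integrable by the cubic real part'' is not the right justification there. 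With these two repairs (and with the uniform-on-compacts convergence of Proposition~\ref{prop:pqconv}, uniform in $t\in[L,M]$, for the regime where $y+\zeta$ lies between the lower bound and a constant, exactly as the paper does), your argument yields \eqref{pbound}.
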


\begin{proposition}\label{prop:qbound}
Let $L<M$ be fixed and consider the scaling \eqref{scaling}.
There are $c>0$ and $C\in\R$ so that for all $n$ large enough
\begin{equation}\label{qbound}
\big|Q_{L}^{(n)}(u+i)\big|\le Ce^{-c(x+\xi)}
\end{equation}
with $x\ge0$ and $\xi\in\R$.
\end{proposition}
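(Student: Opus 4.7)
The plan is a steepest-descent analysis of the contour integral \eqref{defq} for $q_{b_n(L)/2}^{(n)}$, combined with a contour shift that isolates the decay in $x+\xi$. Inserting \eqref{scaling} into \eqref{defq} and absorbing the prefactors of $Q_L^{(n)}$ gives
\begin{equation*}
Q_L^{(n)}(u+i)=\frac{2^{-5/6}n^{1/3}}{2\pi\mathrm i}\oint_{\Gamma_0}\frac{\mathrm dw}{w}\,e^{n\tilde f(w)+Ln^{2/3}\tilde h(w)+(x+\xi)n^{1/3}\tilde\ell(w)},
\end{equation*}
where $\tilde f(w)=f(w)-\tfrac12\log 2$, $\tilde h(w)=2^{-7/6}\log\tfrac{1+1/w}{(1-w)(\sqrt 2+1)^2}$, $\tilde\ell(w)=2^{-5/6}\log\bigl(w(\sqrt 2+1)\bigr)$ and $f(w)=\tfrac{1}{\sqrt 2}\log w+(\tfrac12-\tfrac{1}{2\sqrt 2})\log(1-w)+(\tfrac12+\tfrac{1}{2\sqrt 2})\log(1+1/w)$. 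All three functions $\tilde f,\tilde h,\tilde\ell$ vanish at the \emph{double} saddle $w_c=\sqrt 2-1$ of $\tilde f$: indeed $f'(w)=0$ reduces to $(1+\sqrt 2)w^2-2w+(\sqrt 2-1)=0$, whose discriminant vanishes. A short calculation also gives $\tilde h'(w_c)=0$ and $f'''(w_c)<0$, so in the Airy scaling $w-w_c\sim n^{-1/3}$ all three contributions to the exponent are $O(1)$ and cancel at $w_c$ as required.

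I would then deform $\Gamma_0$ into a contour $\Gamma_0^*$ passing through $w_c$ along the two steepest-descent rays of $\tilde f$, which (since $f'''(w_c)<0$) lie at angles $\pm 2\pi/3$ from the positive real axis at $w_c$, and closing up elsewhere along an arc on which $\mathrm{Re}(\tilde f)\le-\eta<0$ for some $\eta>0$ independent of $n$. Both descent rays lead into the disc $|w|<w_c$: writing $w=w_c+re^{\pm 2\pi\mathrm i/3}$ yields $|w|\le w_c(1-c_1 r)$ for small $r=|w-w_c|$, so that $|w(\sqrt 2+1)|^{2^{-5/6}(x+\xi)n^{1/3}}\le e^{-c(x+\xi)rn^{1/3}}$ for $x+\xi\ge 0$. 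Near $w_c$, the rescaling $w-w_c=\kappa^{-1}\sigma n^{-1/3}$ with $\kappa=(-f'''(w_c)/2)^{1/3}$ turns the exponent into $-\sigma^3/3+c_2L\sigma^2+c_3(x+\xi)\sigma+o(1)$, an integrable cubic polynomial on the rescaled descent contour $\{\sigma=te^{\pm 2\pi\mathrm i/3}:t\in\R_+\}$; combined with the Jacobian $\mathrm dw=\kappa^{-1}n^{-1/3}\mathrm d\sigma$ this cancels the $n^{1/3}$ prefactor and gives an $O(1)$ contribution bounded by $Ce^{-c(x+\xi)}$. The portion of $\Gamma_0^*$ at distance of order unity from $w_c$ is exponentially small in $n$ by the $\mathrm{Re}(\tilde f)\le-\eta$ bound. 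For $x+\xi<0$, the estimate $|e^{(x+\xi)n^{1/3}\tilde\ell(w)}|\le e^{c|x+\xi|}=e^{-c(x+\xi)}$ still holds along the same contour, yielding the same bound in this regime.

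The main obstacle is the construction of the globally admissible contour $\Gamma_0^*$: the steepest-descent rays at the double saddle $w_c$ must be continued into a closed loop encircling $w=0$ (the singularity of $\mathrm dw/w$) but not $w=1$, on which $\mathrm{Re}(\tilde f)\le 0$ throughout with equality only at $w_c$. This requires studying the level curves $\mathrm{Re}(\tilde f)=0$ in the annulus $\{0<|w|<1\}$ and verifying that they admit such a closed loop; this is the one nontrivial step beyond the routine Airy-scaling computation, and it parallels the analysis carried out in~\cite{Jo03} for the unrestricted Aztec diamond. Once that verification is in place, the local and global estimates above combine directly to yield the claimed bound.
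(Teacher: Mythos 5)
Your setup (the exact saddle-point structure, the double critical point at $w_c=\sqrt2-1$, the vanishing of $g_1'$ there, and the $\pm2\pi/3$ descent rays) matches the paper, but the two estimates that actually produce the claimed bound do not hold as written, and the genuinely hard regime is missing. First, for $x+\xi\ge 0$: with the contour passing \emph{through} $w_c$, the factor $e^{(x+\xi)n^{1/3}\tilde\ell(w)}$ has modulus $1$ at $w=w_c$ (i.e.\ at $\sigma=0$), so no pointwise bound on the integrand can give $Ce^{-c(x+\xi)}$; integrating your estimate $e^{-c(x+\xi)rn^{1/3}}$ against the Jacobian only yields decay of order $1/(x+\xi)$. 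To get exponential decay, uniformly for $\xi$ as large as order $n^{2/3}$ (which is required, since in \eqref{defKnLM} the sum over $u$ is unrestricted), you must move the contour off the saddle; and once $x+\xi$ is macroscopic the true saddle of the full exponent sits at distance of order one from $w_c$, so the cubic model with $\O((w-w_c)^4)$ errors is no longer uniform. This is why the paper reabsorbs $(x+\xi)$ into the exponent as $g_0(w,0,r)$ with $x+\xi=2^{5/6}rn^{2/3}$ and runs a steep-descent argument on the circle of radius $\wh\alpha=\sqrt2-1-\min(r,\varepsilon)^{1/2}$, cf.\ \eqref{qboundatalpha}.

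Second, and more seriously, the case $x+\xi<0$ (allowed since $\xi\in\R$, indeed $\xi$ can be of order $-n^{2/3}$) is dismissed with the estimate $|e^{(x+\xi)n^{1/3}\tilde\ell(w)}|\le e^{c|x+\xi|}$, which is false outside an $\O(n^{-1/3})$-neighbourhood of $w_c$: on the part of the contour at distance $\delta$ from $w_c$ the modulus is $e^{c\delta\,|x+\xi|\,n^{1/3}}$, which for $|x+\xi|\sim n^{2/3}$ overwhelms the global suppression $e^{-cn\delta^3}$, so the same contour does not work. For negative $r$ the critical points \eqref{criticalpoints} become a complex-conjugate pair $z_r^\pm$, and one has to show $\Re(g_0(z_r^+,0,r))<0$ (the paper does this via a concavity argument in $r$), treat $r\le-1/\sqrt2$ separately, and handle the delicate intermediate window $r\in(-\varepsilon,0)$ through the bound \eqref{intermediatebound}, obtained from a cubic-plus-quadratic expansion at the complex saddle. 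None of this appears in your proposal, and it is the bulk of the paper's proof; by contrast, the step you flag as the main obstacle (continuing the descent rays into a global loop around $0$ with $\Re\tilde f\le0$) is comparatively routine. So the positive regime is fixable by an $n^{-1/3}$-offset or $r$-dependent contour, but the negative regime requires a substantively different analysis that is not sketched.
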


\begin{proposition}\label{prop:improvedbound}
Under the scaling \eqref{scaling} of the variables, for any $c>0$ there is a constant $C\in\R$ such that for all $n$ large enough
\begin{equation}\label{improvedbound}
\big|P_l^{(n)}(v+j)\big|\le Ce^{-c(t+y+\zeta)}
\end{equation}
holds for all $t\ge0$, $y\ge0$ and $\zeta$ bounded from below.
\end{proposition}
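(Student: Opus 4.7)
\emph{Plan.} Fix $c>0$. I prove \eqref{improvedbound} by splitting into two regimes according to the size of $t$.

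When $t$ lies in a bounded interval $[0,T_0]$, Proposition~\ref{prop:pbound} applied with $[L,M]\supset[0,T_0]$ yields $|P_l^{(n)}(v+j)|\leq C_0 e^{-c_0(y+\zeta)}$ uniformly in such $t$ for $n$ large. Since $e^{-ct}\geq e^{-cT_0}$ on $[0,T_0]$, the claimed bound follows by replacing $C_0$ with $C_0 e^{cT_0}$ (and reducing $c$ to $\min(c,c_0)$ if necessary; this is harmless as $c$ was arbitrary).

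When $t\geq T_0$ for $T_0$ large, the decay in $t$ must come from the contour integral itself. I return to the representation
$$p_l^{(n)}(v+j)=\frac{-1}{2\pi\I}\oint_{\Gamma_1}\frac{\d z}{z}\,\exp\bigl(n\Phi_0(z)+n^{2/3}t\,\Phi_1(z)+n^{1/3}(y+\zeta)\Phi_2(z)\bigr),$$
obtained by substituting \eqref{scaling} into $-(v+j)\log z-(n-l)\log(1-z)-l\log(1+1/z)$. The leading saddle of $\Phi_0$ is $z_c=\sqrt{2}-1$, and one checks directly that the prefactors $2^{n/2}$, $(\sqrt{2}+1)^{2^{-1/6}tn^{2/3}}$, and $(\sqrt{2}+1)^{-2^{-5/6}(y+\zeta)n^{1/3}}$ in the definition of $P_l^{(n)}$ cancel $\exp(n\Phi_0(z_c)+n^{2/3}t\,\Phi_1(z_c)+n^{1/3}(y+\zeta)\Phi_2(z_c))$ up to harmless constants. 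Deforming $\Gamma_1$ to a steepest descent contour through $z_c$ and rescaling $z-z_c\sim n^{-1/3}$, the local Gaussian/cubic analysis matches the Airy contour integral, so the bound reduces, modulo uniform error control, to one on $|\Ai^{(-t)}(y+\zeta)|$. The factorization $\Ai^{(-t)}(y+\zeta)=e^{-2t^3/3-t(y+\zeta)}\Ai(t^2+y+\zeta)$ together with $\Ai(s)\sim e^{-\frac{2}{3}s^{3/2}}/(2\sqrt{\pi}s^{1/4})$ as $s\to+\infty$ gives $|\Ai^{(-t)}(y+\zeta)|\leq C'e^{-\frac{4}{3}t^3-2t(y+\zeta)}$ for $t$ large and $\zeta$ bounded from below, which dominates $e^{-c(t+y+\zeta)}$ for any fixed $c>0$.

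The main obstacle is making the saddle-point approximation uniform in $t\geq T_0$ as $n\to\infty$. Although $t$ enters the exponent only at the $n^{2/3}$ level, the induced shift of the saddle (of order $n^{-1/3}t$) and the modification of the steepest descent contour must be tracked so that the integrand along the remainder of the contour is dominated by its value near $z_c$ uniformly in $t$. Helpful features are that (i) $\mathrm{Re}\,\Phi_1(z)<\Phi_1(z_c)$ off the real axis in a neighborhood of $z_c$, so increasing $t$ sharpens rather than weakens the decay away from the saddle along a suitably chosen contour, and (ii) the local contribution near $z_c$ can be written in a form matching the Airy integral with explicit quantitative error bounds depending polynomially on $t$, which can be absorbed into $e^{-c(t+y+\zeta)}$ by choosing $T_0$ large enough.
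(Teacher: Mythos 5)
Your overall skeleton (bounded $t$ via Proposition~\ref{prop:pbound}, large $t$ via steepest descent on the contour integral, targeting the decay $e^{-\frac43 t^3-2t(y+\zeta)}$ of $\Ai^{(-t)}(y+\zeta)$) is the same as the paper's, and your target exponent is correct. But the step you yourself flag as ``the main obstacle'' is exactly where the proof lies, and the remedy you sketch does not work in the relevant regime. In the kernel $K_n^{L,\infty}$ the time index $l$ runs up to order $n$, so under \eqref{scaling} the variable $t$ ranges up to order $n^{1/3}$; then the term $n^{2/3}t\,\Phi_1(z)$ is of the same order $n$ as $n\Phi_0(z)$, the saddle relevant to the integral is displaced by an amount of order one away from $z_c=\sqrt2-1$, and the local cubic/Airy expansion with the $n^{-1/3}$ rescaling around $z_c$ is simply no longer valid (already for $t$ growing like a modest power of $n$ the quartic correction $n\,\O((z-z_c)^4)\sim t^4n^{-1/3}$ is not small). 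Hence ``the bound reduces, modulo uniform error control, to one on $|\Ai^{(-t)}(y+\zeta)|$'' cannot be justified uniformly in $t\ge T_0$; your points (i) and (ii) are heuristics about a neighborhood of $z_c$ and polynomial-in-$t$ prefactors, which do not address the fact that the expansion point itself is wrong when $t$ grows with $n$. The paper's proof deals with precisely this: it sets $t=2^{7/6}sn^{1/3}$, $y+\zeta=2^{5/6}rn^{2/3}$, absorbs everything into $g_0(z,s,r)$ at order $n$ in \eqref{pqrewrite}--\eqref{defg0}, notes that the integral vanishes identically once $s\ge\frac12-\frac1{2\sqrt2}$ (no pole at $z=1$), locates the $s$-dependent critical point $z_2(s)$, proves a steep-descent property for the circle around $1$ through $z_2(s)$ (positivity of $\wt Q$), and then uses Taylor expansion for small $s$ and a convexity-in-$s$ argument for macroscopic $s$ to obtain \eqref{pfors<eps} and \eqref{pfors>eps}. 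Some argument of this kind, valid for $t$ up to order $n^{1/3}$, is missing from your proposal, so as it stands there is a genuine gap.

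A secondary point: your parenthetical ``reducing $c$ to $\min(c,c_0)$ if necessary; this is harmless as $c$ was arbitrary'' is logically backwards. The statement quantifies ``for any $c>0$ there is $C$'', so you must prove the bound with the given (possibly large) $c$; you are not free to shrink it. In the bounded-$t$ regime the factor $e^{-ct}$ is indeed absorbed into the constant via $e^{cT_0}$, but for the $(y+\zeta)$-decay with large $c$ you need to go back into the proof of Proposition~\ref{prop:pbound}, where the true decay is of order $e^{-\mathrm{const}\,(y+\zeta)^{3/2}}$ and therefore beats $e^{-c(y+\zeta)}$ for every fixed $c$ once $y+\zeta$ is large (the remaining bounded range of $y+\zeta$ is absorbed into $C$). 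As stated, your argument proves a weaker inequality than \eqref{improvedbound}; note that the arbitrariness of the exponent (in particular in $t$) is actually used later in the proof of Lemma~\ref{lemma:intkerneldecay}, so this is not a purely cosmetic issue.
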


\begin{proposition}\label{prop:Hconv}
Suppose that $g:\R\to\R$ is a function as in Theorem~\ref{thm:Xbelowh}.
Under the scaling \eqref{scaling}, the rescaled hitting time and hitting position of the random walk given in \eqref{defTXhat} converge jointly weakly
\begin{equation}\label{Hconv}
\left(2^{7/6}n^{-2/3}\wh T^{u,\frac{b_n(L)}2}_+,2^{5/6}n^{-1/3}\wh X^{u,\frac{b_n(L)}2}_+\right)\Rightarrow\left(T^{\xi,L}_+,X^{\xi,L}_+\right)
\end{equation}
holds as $n\to\infty$ where the limit is given in \eqref{defTX+}.
\end{proposition}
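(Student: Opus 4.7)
The plan is to combine Donsker's invariance principle with a continuous-mapping argument applied to the hitting-time functional against the barrier $g(t)-t^2$.

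\textbf{Invariance principle.} Introduce the rescaled process $\wh S_n(\tau)=2^{5/6}n^{-1/3}(S_{\lfloor b_n(\tau)/2\rfloor}-n/\sqrt2)$, viewed as an element of $D([L,\infty))$ via right-continuous step interpolation, started from $\wh S_n(L)=\xi$. The step $X$ from Proposition~\ref{prop:rwconstruction} has $\E(X)=0$, $\Var(X)=\sqrt2$, and exponential moments (as exploited in Proposition~\ref{prop:ldp}). A direct variance check shows that one unit of rescaled time corresponds to $2^{-7/6}n^{2/3}$ steps, producing variance $\sqrt2\cdot 2^{-7/6}n^{2/3}\cdot(2^{5/6}n^{-1/3})^2=2$ per unit of rescaled time. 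Donsker's theorem then yields $\wh S_n\Rightarrow b$ in the Skorokhod topology on compact subintervals of $[L,\infty)$, where $b$ is a Brownian motion with diffusion coefficient $2$ started at $b(L)=\xi$.

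\textbf{Continuous mapping for the hitting functional.} Define $F:D([L,\infty))\to\R\times\R$ by $F(\omega)=(T(\omega),\omega(T(\omega)))$ with $T(\omega)=\inf\{t>L:\omega(t)>g(t)-t^2\}$. Under the scaling \eqref{scaling}, the discrete hitting pair rescaled as in \eqref{Hconv} coincides with $F(\wh S_n)$ up to a discretization error of order $n^{-2/3}$ coming from the spacing of $\{b_n^{-1}(2l)\}$, which is negligible in the Skorokhod sense. Once $F$ is shown to be continuous at $\P_b$-almost every path, the continuous-mapping theorem combined with the invariance principle delivers $F(\wh S_n)\Rightarrow F(b)=(T^{\xi,L}_+,X^{\xi,L}_+)$, which is the claim.

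\textbf{Almost-sure continuity of the hitting functional.} At points where $g$ is continuous, the standard transversality argument for Brownian crossings applies: by the strong Markov property and the law of the iterated logarithm, $b$ almost surely oscillates strictly above $g(t)-t^2$ immediately after $T^{\xi,L}_+$, so small $J_1$-perturbations of the path shift the hitting time by $o(1)$. At each of the countably many discontinuity points $\tau_0$ of $g$, the value $b(\tau_0)$ has a continuous Gaussian marginal and so a.s.\ differs from both one-sided limits $g(\tau_0^\pm)-\tau_0^2$; hence either $T^{\xi,L}_+<\tau_0$ and we reduce to the continuous case on a smaller window, or $b(\tau_0)$ lies strictly on one side of the jump, in which case continuity of $F$ at $b$ follows by a direct case analysis.

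\textbf{Main obstacle.} The subtlest step is the almost-sure continuity analysis: the rescaled walk $\wh S_n$ is itself discontinuous, and jumps of the barrier can cause the discrete hitting time to jump by order one in rescaled units in a way that is not captured by $J_1$-closeness alone. The resolution is a union bound over the countable discontinuity set of $g$, combined with the diffuseness of one-dimensional Brownian marginals, which produces a single $\P_b$-almost sure event on which the limiting path avoids all exceptional crossing configurations; the transversality argument at continuous points of $g$ then handles the complementary crossings uniformly. Together with the invariance principle, this yields the joint weak convergence \eqref{Hconv}.
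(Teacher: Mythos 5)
Your proposal is correct in substance and shares the paper's backbone (Donsker's invariance principle for the rescaled walk, with the same variance check giving diffusion coefficient $2$), but the second half runs along a genuinely different route. The paper never invokes the continuous-mapping theorem for the first-passage functional: for continuous $g$ it first observes that the hitting position is a deterministic function of the hitting time, and then it gets convergence of the hitting-time tail probabilities directly from the Portmanteau theorem applied to the events $E_s=\{b(t)\le g(t)-t^2,\ t\in[L,s]\}$; the only thing to verify is $\P(\partial E_s)=0$, which is done by Cameron--Martin/Girsanov, square integrability of the derivative being exactly Novikov's condition, so that under an equivalent measure $b(t)-g(t)+t^2$ is a Brownian motion and its running supremum has no atom at $0$. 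Jumps of $g$ are then handled by showing that $E_{s_1}\cap\{b(s_1)\in I\}$ is also a continuity set, with induction over finitely many jumps. You instead establish almost-sure continuity of the path functional $\omega\mapsto(T(\omega),\omega(T(\omega)))$ via strong Markov plus LIL transversality and diffuseness of the Brownian marginals at the jump times. Both routes are legitimate: the paper's distributional argument is shorter because it sidesteps path-level continuity altogether (in particular it never needs to exclude a ``touch without crossing'' before the hitting time), while your argument yields the joint convergence of time and position in one stroke without the reduction ``position $=$ function of time''. Two points you should make explicit to close your version. First, the transversality claim is not automatic for a merely continuous barrier; it is precisely the hypothesis that $g'$ is square integrable (as in Theorem~\ref{thm:Xbelowh}) which makes the barrier increment over $[T,T+h]$ of order $\|g'\|_{L^2([T,T+h])}\sqrt h=o\big(\sqrt{h\log\log(1/h)}\big)$, so that the LIL oscillation of $b$ after the stopping time wins; this is the same place where the paper uses $g'\in L^2$, via Novikov. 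Second, the argument must be applied at the first touching time $\inf\{t>L: b(t)\ge g(t)-t^2\}$ (a stopping time), which is then shown to coincide a.s.\ with $T^{\xi,L}_+$ from \eqref{defTX+}; this is what rules out an earlier touch that nearby discrete paths could convert into a premature crossing, and it is needed for the lower semicontinuity half of the continuity of $T(\cdot)$. With these two points spelled out, your continuous-mapping route goes through, and your union bound over the countable discontinuity set is in fact no weaker than the paper's finite-jump induction.
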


\begin{lemma}\label{lemma:intkerneldecay}
Let $L\in\R$ be fixed.
There are $c>0$ and $C\in\R$ such that for any $T>L$
\begin{equation}\label{kdecay}
\sum_{u\in\Z}\sum_{l=\frac{b_n(T)}2}^{\frac{b_n(T+1)}2}\sum_{v\in\Z}\left|2^{-5/6}n^{1/3}\P\left(\wh T^{u,\frac{b_n(L)}2}_+=l,\wh X^{u,m}_+=v\right)k_n^{u,l,v}(i,j)\right|\le Ce^{-c(x+y+T)}
\end{equation}
holds for any $x,y\ge0$ uniformly for all $n$ large enough.
Further, for any $T>L$
\begin{equation}\label{kappadecay}
\int_\R\d\xi\int_T^{T+1}\int_\R\P(T^{\xi,L}_+\in\d t,X^{\xi,L}_+\in\d\zeta)\left|\Ai^{(L)}(x+\xi)\Ai^{(-t)}(\zeta+y)\right|\le Ce^{-c(x+y+T)}
\end{equation}
holds for any $x,y\ge0$.
\end{lemma}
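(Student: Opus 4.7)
The plan is to prove both \eqref{kdecay} and \eqref{kappadecay} by the same mechanism, based on the sub-Gaussian bounds of Propositions~\ref{prop:qbound} and~\ref{prop:improvedbound}, together with an exact cancellation of the $(\sqrt 2+1)$-prefactors between $k_n^{u,l,v}$ and the rescaled functions $Q_L^{(n)}, P_l^{(n)}$.

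First I will perform the algebraic rewriting. Substituting the definitions of $P_l^{(n)}$ and $Q_L^{(n)}$ into \eqref{defk}, the $(\sqrt 2+1)$-exponents cancel because, under the scaling \eqref{scaling}, $-b_n(L)+2l=2^{-1/6}(t-L)n^{2/3}$ and $u-v=2^{-5/6}(\xi-\zeta)n^{1/3}$, giving
\begin{equation*}
2^{-5/6}n^{1/3}\, k_n^{u,l,v}(i,j) = 2^{5/6}n^{-1/3}\, Q_L^{(n)}(u+i)\, P_l^{(n)}(v+j).
\end{equation*}
Applying $|Q_L^{(n)}(u+i)|\le Ce^{-c(x+\xi)}$ from Proposition~\ref{prop:qbound} and $|P_l^{(n)}(v+j)|\le Ce^{-c(t+y+\zeta)}$ from Proposition~\ref{prop:improvedbound}, and noting that $l\in[b_n(T)/2,b_n(T+1)/2]$ forces $t\in[T,T+1]$ and hence $e^{-ct}\le e^c e^{-cT}$, the left-hand side of \eqref{kdecay} is bounded by
\begin{equation*}
C'\, e^{-c(x+y+T)}\cdot 2^{5/6}n^{-1/3}\sum_{u,l,v}\P\bigl(\wh T^{u,b_n(L)/2}_+ = l,\ \wh X^{u,b_n(L)/2}_+ = v\bigr)\, e^{-c(\xi+\zeta)}.
\end{equation*}

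Next I will pass from the discrete sum to an integral. The sum over integer $u$, viewed as a Riemann sum in the rescaled variable $\xi$ with spacing $2^{5/6}n^{-1/3}$, produces a factor $2^{-5/6}n^{1/3}$ that cancels the prefactor $2^{5/6}n^{-1/3}$. The inner double sum $\sum_{l,v}\P(\wh T_+=l,\wh X_+=v)\,e^{-c\zeta}$ is an expectation of a bounded continuous functional of $(\wh T^{u,b_n(L)/2}_+,\wh X^{u,b_n(L)/2}_+)$ restricted to $l\in[b_n(T)/2,b_n(T+1)/2]$; the weak convergence of Proposition~\ref{prop:Hconv} combined with the exponential tails of Proposition~\ref{prop:ldp} yields convergence to, and a uniform-in-$n$ dominating envelope by, $\int_T^{T+1}\int_\R\P(T^{\xi,L}_+\in dt, X^{\xi,L}_+\in d\zeta)\,e^{-c\zeta}$. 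Combining,
\begin{equation*}
\text{LHS of \eqref{kdecay}}\le C''\, e^{-c(x+y+T)}\int_\R d\xi\, e^{-c\xi}\int_T^{T+1}\int_\R\P(T^{\xi,L}_+\in dt,X^{\xi,L}_+\in d\zeta)\,e^{-c\zeta}.
\end{equation*}

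Finally I will show that this last integral is bounded uniformly in $T>L$. For $\xi\ge g(L)-L^2$ the Brownian motion starts above the barrier, so $T^{\xi,L}_+=L<T$ and the integrand vanishes; only the half-line $\xi\le g(L)-L^2$ contributes. For $\xi\to-\infty$, a Gaussian tail estimate for the hitting probability (reflection applied on intervals where $g$ is continuous) gives $\P(T^{\xi,L}_+\le T+1)\le C\exp(-c'\xi^2/(T+1-L))$, which dominates $e^{-c\xi}$ and makes the $\xi$-integral finite. The $\zeta$-integration is controlled by the pointwise bound $\zeta\ge\min_{t\in[T,T+1]}(g(t)-t^2)$, which is bounded below on the support of $g$, and the decay of $e^{-c\zeta}$ at $+\infty$. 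The estimate \eqref{kappadecay} then follows by the same three-step scheme with $Q_L^{(n)}$ and $P_l^{(n)}$ replaced by $\Ai^{(L)}(x+\xi)$ and $\Ai^{(-t)}(\zeta+y)$, for which the sub-Gaussian bounds are standard Airy asymptotics. The main obstacle is the last step: uniform-in-$T$ control of $\int d\xi\,e^{-c\xi}\P(T^{\xi,L}_+\in[T,T+1])$ is delicate when $g\in H^1_\ext$ is discontinuous, since the Brownian path may jump over the graph and $X^{\xi,L}_+$ may overshoot the curve. Piecewise Gaussian bounds on the components of $g$ that are finite, together with the uniform transfer from the continuous estimate to the discrete walk via Proposition~\ref{prop:ldp}, are what make the argument close.
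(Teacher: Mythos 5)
There is a genuine gap, and it sits exactly at the step you flag as ``the main obstacle'' but then wave through. Your final claim --- that $\int_\R\d\xi\, e^{-c\xi}\,\P\big(T^{\xi,L}_+\in[T,T+1]\big)$ is bounded uniformly in $T>L$ --- is false. On the relevant regime $\xi$ very negative, the best available bound on the hitting probability is Gaussian with variance of order $T-L$, say $\exp\!\big(-\varepsilon'(\ul g-\xi)^2/(1+T-L)\big)$ with $\ul g$ the minimum of $g(\tau)-\tau^2$; convolving this against the exponentially growing weight $e^{-c\xi}$ gives a factor of order $e^{c^2(1+T-L)/(4\varepsilon')}$, i.e.\ exponential \emph{growth} in $T$ (this is precisely the computation in \eqref{lowstartGauss} of the paper). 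Since you have already extracted $e^{-cT}$ from $P_l^{(n)}$ with the \emph{same} fixed $c$, nothing in your estimate can absorb this growth, and the chain of inequalities does not close. The paper's way out is the one piece of structure you never use: in Proposition~\ref{prop:improvedbound} (and, in the continuum, in the super-exponential decay of $\Ai^{(-t)}$ in $t$) the decay rate in $t$ can be taken \emph{arbitrarily large}, so one first fixes the target rate $c$ of the lemma and then chooses the $t$-exponent large enough to dominate the $e^{c^2(1+T-L)/(4\varepsilon')}$ coming from the $\xi$-integral. Without invoking this freedom the bound $Ce^{-c(x+y+T)}$ cannot be reached by your route.

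A second, related problem is the transfer from the discrete sum to the continuum integral. You propose to obtain a uniform-in-$n$ dominating envelope for $\sum_{l,v}\P(\wh T_+=l,\wh X_+=v)e^{-c\zeta}$ from the weak convergence of Proposition~\ref{prop:Hconv}. Weak convergence is a fixed-$\xi$ distributional statement and yields no quantitative bound that is simultaneously uniform in $n$ and integrable in $\xi$; worse, it is circular, since Lemma~\ref{lemma:intkerneldecay} is exactly the domination needed to exploit Proposition~\ref{prop:Hconv} in the proof of Theorem~\ref{thm:Ybelowacurve}. The paper avoids this by estimating the discrete hitting probabilities directly: after splitting into the regimes $\xi\ge\ul g$ (where the total hitting probability is bounded by $1$, as in \eqref{hittingprobbound}) and $\xi<\ul g$, it decomposes the event of hitting in $[b_n(T)/2,b_n(T+1)/2]$ according to the position of the walk at time $b_n(T)$ (see \eqref{hittingsumdecomp}) and applies the sub-Gaussian large deviation bound of Proposition~\ref{prop:ldp} to each piece, obtaining $Ce^{-\varepsilon'(\ul g-\xi)^2/(1+T-L)}$ uniformly in $n$. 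If you replace your weak-convergence step by this explicit random-walk estimate and then invoke the arbitrariness of the $t$-decay rate as above, your outline does become the paper's proof; as written, both the $n$-uniformity and the $T$-uniformity are unproved, and the latter is not merely unproved but untrue.
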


\begin{proof}[Proof of Theorem~\ref{thm:Ybelowacurve}]
First we use Theorem~\ref{thm:belowcurverewrite} to represent both sides of \eqref{Ybelowacurve} as Fredholm determinants.
We introduce the rescaled discrete kernel
\begin{equation}\label{defKresc}
K_n^{L,M,\text{resc}}(x,y)=2^{-5/6}n^{1/3}K_n^{L,M}\left(2^{-5/6}n^{1/3}x,2^{-5/6}n^{1/3}y\right).
\end{equation}
Next we show that for any $x,y\ge0$
\begin{equation}\label{kernelconv}
K_n^{L,M,\text{resc}}(x,y)\to K^{L,M}(x,y)
\end{equation}
as $n\to\infty$ and that there are $c>0$ and $C\in\R$ such that
\begin{equation}\label{kerneldecay}
\left|K_n^{L,M,\text{resc}}(x,y)\right|\le Ce^{-c(x+y)}
\end{equation}
holds for any $x,y\ge0$.
Then the convergence of Fredholm determinant follows from \eqref{kernelconv} and \eqref{kerneldecay} by dominated convergence.

Proposition~\ref{prop:pqconv} yields that under the scaling \eqref{scaling}
\begin{equation}\label{kconv}
2^{-5/3}n^{2/3}k_n^{u,l,v}(i,j)\to\Ai^{(L)}(x+\xi)\Ai^{(-t)}(\zeta+y)
\end{equation}
holds as $n\to\infty$.
By the weak convergence in Proposition~\ref{prop:Hconv} and by \eqref{kdecay} in Lemma~\ref{lemma:intkerneldecay}
dominated convergence implies \eqref{kernelconv} and \eqref{kerneldecay} for the $L=T$ and $M=T+1$ case with the right-hand side of \eqref{kerneldecay} replaced by $Ce^{-c(x+y+T)}$.
The general $L<M$ case follows immediately which proves the theorem.
\end{proof}

\section{Extension of conditioning}\label{s:extension}

In this section, we prove that the conditioning for the top path of the Aztec diamond ensemble and that of the Airy process can be extended to the whole $\R$,
that is, we prove Theorem~\ref{thm:YbelowacurveR} from its finite interval counterpart Theorem~\ref{thm:Ybelowacurve}.

Let us define
\begin{align}
a_n^{L,M}&=\P(Y_n(b_n(\tau))\le g_n(\tau)\mbox{ for }\tau\in[L,M]),\label{defanLM}\\
a^{L,M}&=\P(\mathcal A_2(\tau)\le g(\tau)\mbox{ for }\tau\in[L,M])\label{defaLM}
\end{align}
where the cases $L=-\infty$ and $M=\infty$ are also allowed.
For $L,M\in\R$,
\begin{equation}\label{aFredholm}
a_n^{L,M}=\det\left(\id-K_n^{L,M}\right)_{\ell^2(\Z_+)},\qquad a^{L,M}=\det\left(\id-K^{L,M}\right)_{L^2(\R_+)}
\end{equation}
holds by Theorem~\ref{thm:belowcurverewrite} where the kernels are given by \eqref{defKnLM} and \eqref{defKLM}.
In the rest of this section, we prove that \eqref{aFredholm} can be extended for $M=\infty$ with fixed $L\in\R$ and the kernels in the Fredholm determinant formulas make sense for these values.
We mention however that the kernel inside the Fredholm determinant formulas in \eqref{aFredholm} are not well-defined for $L=-\infty$.

\begin{proposition}\label{prop:halfinfiniteFredholm}
Let $L\in\R$ be fixed.
Then
\begin{equation}\label{KnLMconv}
\det(\id-K_n^{L,M})_{\ell^2(\Z_+)}\to\det(\id-K_n^{L,\infty})_{\ell^2(\Z_+)}
\end{equation}
uniformly in $n$ as $M\to\infty$ where the kernel $K_n^{L,\infty}$ is obtained from $K_n^{L,M}$ defined in \eqref{defKnLM} by the formal substitution $M=\infty$.
As a consequence, for any positive integer $n$,
\begin{equation}\label{halfinfiniteFredholm}
\P(Y_n(b_n(\tau))\le g_n(\tau)\mbox{ for }\tau\in[L,\infty))=\det\left(\id-K_n^{L,\infty}\right)_{\ell^2(\Z_+)}.
\end{equation}
\end{proposition}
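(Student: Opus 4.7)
The plan is to establish the proposition by showing that $\|K_n^{L,\infty}-K_n^{L,M}\|_1\to 0$ as $M\to\infty$ \emph{uniformly} in $n$, and then invoking the standard Fredholm determinant continuity. Observe first that for each fixed $n$ the hitting time $\wh T_+^{u,b_n(L)/2}$ cannot exceed $n$, so $K_n^{L,M}=K_n^{L,\infty}$ as soon as $b_n(M)/2\ge n$; the real content of the first claim is therefore the uniformity in $n$, which will later permit the interchange with the $n\to\infty$ limit in subsequent sections.

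The crucial input will be Lemma~\ref{lemma:intkerneldecay}, which upon summation over $T\ge\lfloor M\rfloor$ yields a pointwise estimate $|K_n^{L,\infty,\text{resc}}(x,y)-K_n^{L,M,\text{resc}}(x,y)|\le Ce^{-c(x+y+M)}$ uniform in $n$. To upgrade this to a trace-norm bound I would decompose $K_n^{L,\infty}-K_n^{L,M}=\sum_{T\ge\lfloor M\rfloor}D_{n,T}$ into blocks with $l\in[b_n(T)/2,b_n(T+1)/2)$. Each summand $k_n^{u,l,v}(i,j)$ in \eqref{defk} is manifestly rank-one in $(i,j)$: the $i$-dependence sits entirely in $q_{b_n(L)/2}^{(n)}(u+i)(\sqrt2+1)^i$ and the $j$-dependence entirely in $p_l^{(n)}(v+j)(\sqrt2+1)^{-j}$. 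This gives a natural Hilbert--Schmidt factorization $D_{n,T}=\mathcal A_{n,T}\mathcal B_{n,T}$; Propositions~\ref{prop:pbound} and~\ref{prop:qbound} together with Lemma~\ref{lemma:intkerneldecay} should bound $\|\mathcal A_{n,T}\|_2\|\mathcal B_{n,T}\|_2\le Ce^{-cT}$ uniformly in $n$, and summation over $T$ then gives $\|K_n^{L,\infty}-K_n^{L,M}\|_1\le C'e^{-cM}$. Combined with a uniform bound on $\|K_n^{L,\infty}\|_1$ (the same block argument run over all $T\in[L,\infty)$) and the Lipschitz estimate $|\det(\id-A)-\det(\id-B)|\le\|A-B\|_1\exp(\|A\|_1+\|B\|_1+1)$, this establishes \eqref{KnLMconv}. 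For \eqref{halfinfiniteFredholm}, fix $n$ and note that the events $\{Y_n(b_n(\tau))\le g_n(\tau)\text{ on }[L,M]\}$ decrease monotonically in $M$ to $\{Y_n(b_n(\tau))\le g_n(\tau)\text{ on }[L,\infty)\}$, so by monotone convergence and Theorem~\ref{thm:belowcurverewrite}, the left-hand side equals $\lim_{M\to\infty}\det(\id-K_n^{L,M})_{\ell^2(\Z_+)}=\det(\id-K_n^{L,\infty})_{\ell^2(\Z_+)}$.

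The main obstacle will be the uniform-in-$n$ trace-norm estimate: pointwise decay alone immediately produces a Hilbert--Schmidt bound, but Fredholm determinant continuity requires trace class. Extracting a trace norm from the block decomposition forces one to exploit the rank-one factorization of each summand and to keep careful track of the normalization factors ($2^{\pm n/2}$, $n^{\pm 1/3}$, and powers of $\sqrt2+1$) appearing in $p_l^{(n)}$ and $q_{b_n(L)/2}^{(n)}$, which is bookkeeping-intensive but routine once the correct Hilbert--Schmidt factorization is in place.
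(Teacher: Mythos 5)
Your proposal follows the same skeleton as the paper: the pointwise estimate $|K_n^{L,M,\text{resc}}(x,y)-K_n^{L,\infty,\text{resc}}(x,y)|\le Ce^{-c(x+y+M)}$ obtained by summing \eqref{kdecay} of Lemma~\ref{lemma:intkerneldecay} over $T\ge M$, the Lipschitz bound $|\det(\id-A)-\det(\id-B)|\le\|A-B\|_1\exp(\|A\|_1+\|B\|_1+1)$, and, for \eqref{halfinfiniteFredholm}, monotone continuity of measure combined with Theorem~\ref{thm:belowcurverewrite} and the uniform (in $n$) convergence \eqref{KnLMconv}. The one genuine divergence is the step you correctly identify as the crux, namely upgrading the pointwise decay to a trace-norm bound: the paper does this in one line by factoring the difference through the diagonal exponential weight $B(x,y)=\delta_{x,y}e^{-cx/2}$ and using $\|D\|_1\le\|B\|_2\,\|B^{-1}D\|_2$, so that only the already-proved pointwise bound enters; you instead propose a block decomposition in $T$ together with the rank-one structure of each summand $k_n^{u,l,v}(i,j)$ in \eqref{defk}, factoring through the index set $(u,l,v)$. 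Your route is viable (and on $\ell^2(\Z_+)$ arguably more scrupulous about what is literally Hilbert--Schmidt), but be aware that it amounts to re-running the proof of Lemma~\ref{lemma:intkerneldecay} with squared kernels: you must split the hitting probability as a square root between the two factors, handle the regime $\xi<\ul g$ where the bound of Proposition~\ref{prop:qbound} grows as $\xi\to-\infty$ and integrability comes only from the Gaussian hitting estimate of Proposition~\ref{prop:ldp}, and beat the resulting $e^{O(T)}$ factor (cf.\ \eqref{lowstartGauss}) using that the constant $c$ in Proposition~\ref{prop:improvedbound} can be taken arbitrarily large --- so the claimed $\|\mathcal A_{n,T}\|_2\|\mathcal B_{n,T}\|_2\le Ce^{-cT}$ is true but not automatic. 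Also, your side remark that the hitting time cannot exceed $n$ is not literally correct (the random walk may hit late or never); the reason $K_n^{L,M}=K_n^{L,\infty}$ for $b_n(M)/2\ge n$ is that $p_l^{(n)}\equiv0$ for $l\ge n$, since the integrand in \eqref{defp} is then analytic at $z=1$ --- a harmless slip, as this remark is not load-bearing, but like the paper you do need the same weighting trick applied to $K_n^{L,\infty,\text{resc}}$ itself to control the exponential factor in \eqref{Fredholmdiff} uniformly in $n$ and $M$, which you note.
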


\begin{proof}[Proof of Proposition~\ref{prop:halfinfiniteFredholm}]
First note that by change of variables the scaling identity
\begin{equation}\label{kernelscaling}
\det\left(\id-K_n^{L,M}\right)_{\ell^2(\Z_+)}=\det\left(\id-K_n^{L,M,\text{resc}}\right)_{L^2(\R_+)}
\end{equation}
holds where the rescaled kernel is given by \eqref{defKresc}.
The identity \eqref{kernelscaling}--\eqref{defKresc} also holds when $M$ is replaced by $\infty$.
To show the uniform convergence in \eqref{KnLMconv}, we use the general bound on the difference of Fredholm determinants for our case
\begin{multline}\label{Fredholmdiff}
\left|\det(\id-K_n^{L,M,\text{resc}})_{L^2(\R_+)}-\det(\id-K_n^{L,\infty,\text{resc}})_{L^2(\R_+)}\right|\\
\le\left\|K_n^{L,M,\text{resc}}-K_n^{L,\infty,\text{resc}}\right\|_1\exp\left(\left\|K_n^{L,M,\text{resc}}\right\|_1+\left\|K_n^{L,\infty,\text{resc}}\right\|_1+1\right).
\end{multline}
As a direct consequence of \eqref{defKnLM} and \eqref{kdecay} in Lemma~\ref{lemma:intkerneldecay}, we get that
\begin{equation}
\left|K_n^{L,M,\text{resc}}(x,y)-K_n^{L,\infty,\text{resc}}(x,y)\right|\le Ce^{-c(x+y+M)}.
\end{equation}
For bounding the $1$-norm of the kernels on the right-hand side of \eqref{Fredholmdiff}, let us define the kernel $B(x,y)=\delta_{x,y}e^{-cx/2}$.
Then using the fact that the $1$ norm of a product of kernels can be upper bounded by the product of the $2$-norms of the factors, we can write
\begin{equation}\begin{aligned}
\left\|K_n^{L,M,\text{resc}}-K_n^{L,\infty,\text{resc}}\right\|^2_1&\le\left\|B\right\|^2_2\cdot\left\|B^{-1}\left(K_n^{L,M,\text{resc}}-K_n^{L,\infty,\text{resc}}\right)\right\|_2^2\\
&\le\left(\int_0^\infty\d x\,e^{-cx}\right)Ce^{-2cM}\int_0^\infty\d x\int_0^\infty\d y\,e^{-2(cx/2+cy)}\\
&\le C'e^{-2cM},
\end{aligned}\end{equation}
which holds uniformly in $n$ proving uniform convergence in \eqref{KnLMconv}.

Next we let $M\to\infty$ in the first equation of \eqref{aFredholm} for fixed $L$.
For fixed $L$ and $n$, the events $\{Y_n(b_n(\tau))\le g_n(\tau)\mbox{ for }\tau\in[L,M]\}$ form a decreasing family in $M$,
hence by the continuity of measure, their probabilities converge, i.e.\ $\lim_{M\to\infty}a_n^{L,M}=a_n^{L,\infty}$ for any $L$ and $n$.
Therefore by \eqref{KnLMconv}, we see that $a_n^{L,\infty}=\det\left(\id-K_n^{L,\infty}\right)_{\ell^2(\Z_+)}$ which proves \eqref{halfinfiniteFredholm}.
\end{proof}

\begin{proposition}\label{prop:halfinfiniteconv}
For any fixed $L\in\R$, it holds
\begin{equation}\label{halfinfiniteconv}
\lim_{n\to\infty}a_n^{L,\infty}=a^{L,\infty}.
\end{equation}
\end{proposition}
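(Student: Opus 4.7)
The plan is to deduce $a_n^{L,\infty}\to a^{L,\infty}$ from three ingredients already at our disposal by a standard three-epsilon argument: (i) the uniform-in-$n$ convergence $a_n^{L,M}\to a_n^{L,\infty}$ as $M\to\infty$ proved in Proposition~\ref{prop:halfinfiniteFredholm}; (ii) the convergence $a^{L,M}\to a^{L,\infty}$ as $M\to\infty$ on the continuum side, which follows from continuity of measure from above since $\{\mathcal A_2(\tau)\le g(\tau)\text{ for }\tau\in[L,M]\}$ is a decreasing family of events as $M$ grows; and (iii) the finite-interval convergence $a_n^{L,M}\to a^{L,M}$ as $n\to\infty$ from Theorem~\ref{thm:Ybelowacurve}.

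Given $\varepsilon>0$, I would first fix $M_0>L$ large enough that, by (i), $|a_n^{L,M_0}-a_n^{L,\infty}|<\varepsilon/3$ for every $n$ and, by (ii), $|a^{L,M_0}-a^{L,\infty}|<\varepsilon/3$. For this fixed $M_0$, Theorem~\ref{thm:Ybelowacurve} then produces an $N$ such that $|a_n^{L,M_0}-a^{L,M_0}|<\varepsilon/3$ for all $n\ge N$. The triangle inequality
\begin{equation*}
|a_n^{L,\infty}-a^{L,\infty}|\le |a_n^{L,\infty}-a_n^{L,M_0}|+|a_n^{L,M_0}-a^{L,M_0}|+|a^{L,M_0}-a^{L,\infty}|
\end{equation*}
then yields $|a_n^{L,\infty}-a^{L,\infty}|<\varepsilon$ for $n\ge N$, which, since $\varepsilon$ is arbitrary, gives the claim.

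The substantive work is already contained in Proposition~\ref{prop:halfinfiniteFredholm}: its uniform-in-$n$ convergence, powered by the exponential-in-$T$ tail bound of Lemma~\ref{lemma:intkerneldecay} together with the Hilbert--Schmidt estimate used in that proof, is precisely what enables the exchange of the $n\to\infty$ and $M\to\infty$ limits. Given that uniformity, the present proposition is essentially an assembly step and I do not anticipate any genuine obstacle beyond the three ingredients above; the only point worth double-checking is that the class of functions $g$ permitted in Proposition~\ref{prop:halfinfiniteFredholm} and in Theorem~\ref{thm:Ybelowacurve} both cover the $g$ under consideration, which is the case in the present setting.
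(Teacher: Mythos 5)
Your proposal is correct and follows essentially the same route as the paper: the same triangle-inequality decomposition into the three terms, with the uniform-in-$n$ convergence of Proposition~\ref{prop:halfinfiniteFredholm} controlling $|a_n^{L,\infty}-a_n^{L,M}|$, the finite-interval Theorem~\ref{thm:Ybelowacurve} controlling the middle term, and the convergence $a^{L,M}\to a^{L,\infty}$ (by monotonicity/continuity of measure) controlling the last.
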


\begin{proof}[Proof of Proposition~\ref{prop:halfinfiniteconv}]
By the triangle inequality, one can write for any $L,M$ and $n$
\begin{equation}\label{triangle}
\left|a_n^{L,\infty}-a^{L,\infty}\right|\le\left|a_n^{L,\infty}-a_n^{L,M}\right|+\left|a_n^{L,M}-a^{L,M}\right|+\left|a^{L,M}-a^{L,\infty}\right|.
\end{equation}
By the uniform convergence in Proposition~\ref{prop:halfinfiniteFredholm} the first term on the right-hand side of \eqref{triangle} goes to $0$ uniformly in $n$ as $M\to\infty$.
Hence we can choose $M$ large enough so that the first and also the third term on the right-hand side of \eqref{triangle} are arbitrarily small.
With this $M$, we can use Theorem~\ref{thm:Ybelowacurve} on the interval $[L,M]$ to get that the second term on the right-hand side of \eqref{triangle} is small if $n$ is large enough,
from which we conclude \eqref{halfinfiniteconv}.
\end{proof}

In the next proposition, we bound the probability that the top curve in the tiling of the Aztec diamond hits $g_n$ in the interval $[L,\infty)$ uniformly in $n$.

\begin{proposition}\label{prop:hittingfar}
There are $c>0$ and $C\in\R$ such that for any $L>0$
\begin{equation}\label{hittingfar}
1-a_n^{L,\infty}\le Ce^{-cL}
\end{equation}
holds uniformly in $n$.
Similarly there are $c>0$ and $C\in\R$ so that for any $L>0$
\begin{equation}\label{hittingfarlim}
1-a^{L,\infty}\le Ce^{-cL}.
\end{equation}
\end{proposition}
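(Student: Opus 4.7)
The plan is to use Proposition~\ref{prop:halfinfiniteFredholm} together with the scaling identity \eqref{kernelscaling} to express $a_n^{L,\infty}=\det(\id-K_n^{L,\infty,\text{resc}})_{L^2(\R_+)}$, and then to show that $\|K_n^{L,\infty,\text{resc}}\|_1 \le C e^{-cL}$ uniformly in $n$. Once this trace norm bound is in place, \eqref{hittingfar} follows from the standard Lipschitz estimate $|1-\det(\id-K)| \le \|K\|_1 e^{\|K\|_1+1}$, with the trivial bound $1-a_n^{L,\infty} \le 1$ handling the range of small $L$ after adjusting the constant $C$.

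The main work is the pointwise estimate $|K_n^{L,\infty,\text{resc}}(x,y)| \le Ce^{-c(x+y+L)}$ valid uniformly in $n$ and $x,y\ge 0$. I would derive it by decomposing the sum over the hitting time $l$ in the definition \eqref{defKnLM} into unit time intervals $l \in [b_n(T)/2, b_n(T+1)/2]$ for integer $T \ge \lceil L\rceil$ (together with a boundary term for $l$ between $b_n(L)/2$ and $b_n(\lceil L\rceil)/2$, which is handled identically), applying \eqref{kdecay} of Lemma~\ref{lemma:intkerneldecay} on each such interval to obtain a contribution bounded by $Ce^{-c(x+y+T)}$, and summing the resulting geometric series over $T$. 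Converting this pointwise bound into a trace norm bound proceeds exactly as in the proof of Proposition~\ref{prop:halfinfiniteFredholm}: with the multiplication operator $B(x,y) = \delta_{x,y} e^{-cx/4}$, one has $\|K_n^{L,\infty,\text{resc}}\|_1 \le \|B\|_2 \|B^{-1} K_n^{L,\infty,\text{resc}}\|_2$, where $\|B\|_2^2$ is a finite constant and
\begin{equation*}
\|B^{-1} K_n^{L,\infty,\text{resc}}\|_2^2 \le \int_0^\infty\!\int_0^\infty e^{cx/2}\,C^2 e^{-2c(x+y+L)}\,\d x\,\d y \le C' e^{-2cL}
\end{equation*}
by the pointwise bound just established. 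This yields $\|K_n^{L,\infty,\text{resc}}\|_1 \le C'' e^{-cL}$, which combined with the Lipschitz estimate completes the proof of \eqref{hittingfar}.

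For the limit statement \eqref{hittingfarlim}, the same strategy applies: I would first extend Theorem~\ref{thm:belowcurverewrite} to $M=\infty$ by the same uniform-in-$M$ trace norm argument that produces Proposition~\ref{prop:halfinfiniteFredholm}, yielding the Fredholm representation $a^{L,\infty} = \det(\id-K^{L,\infty})_{L^2(\R_+)}$, and then apply the bound \eqref{kappadecay} of Lemma~\ref{lemma:intkerneldecay} in place of \eqref{kdecay} to obtain $\|K^{L,\infty}\|_1 \le Ce^{-cL}$. The principal obstacle is not conceptual but rather the bookkeeping required to ensure that the decomposition of the range of the hitting time into unit intervals produces constants uniform in $n$; this is precisely the content of Lemma~\ref{lemma:intkerneldecay}, so there is no genuinely new estimate to prove here.
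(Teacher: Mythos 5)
Your proposal matches the paper's proof in essence: both rest on the pointwise decay bound $|K_n^{L,\infty,\text{resc}}(x,y)|\le Ce^{-c(x+y+L)}$ obtained from \eqref{kdecay} of Lemma~\ref{lemma:intkerneldecay} (summed over unit time windows), and on \eqref{kappadecay} for the limiting kernel in the second statement. The only difference is cosmetic: you convert the pointwise bound into a trace-norm bound and invoke the Lipschitz estimate for Fredholm determinants (as in the proof of Proposition~\ref{prop:halfinfiniteFredholm}), whereas the paper expands $1-\det(\id-K_n^{L,\infty,\text{resc}})$ in its Fredholm series and applies Hadamard's inequality directly; the two finishing steps are standard and interchangeable here.
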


\begin{proof}[Proof of Proposition~\ref{prop:hittingfar}]
By using the rescaled kernels introduced in \eqref{defKresc}, we have
\begin{equation}\label{aLinftyresc}
a_n^{L,\infty}=\det\left(\id-K_n^{L,\infty,\text{resc}}\right)_{L^2(\R_+)}
\end{equation}
for which kernel
\begin{equation}\label{KnLinftybound}
\left|K_n^{L,\infty,\text{resc}}(x,y)\right|\le Ce^{-c(x+y+L)}
\end{equation}
holds with some $c>0$ and $C\in\R$ by \eqref{kdecay} in Lemma~\ref{lemma:intkerneldecay}.
By \eqref{aLinftyresc}, the probability to be bounded is written as
\begin{equation}\begin{aligned}
1-\det\left(\id-K_n^{L,\infty,\text{resc}}\right)_{\ell^2(\Z_+)}&=\sum_{k=1}^\infty\frac1{k!}\int_{\R_+}\d x_1\dots\int_{\R_+}\d x_k\det\left(K_n^{L,\infty,\text{resc}}(x_l,x_m)\right)_{l,m=1}^k\\
&\le\sum_{k=1}^\infty\frac1{k!}\int_{\R_+}\d x_1\dots\int_{\R_+}\d x_kC^ke^{-2c(x_1+\dots+x_k)-ckL}k^{k/2}\\
&=\sum_{k=1}^\infty\frac{k^{k/2}}{k!}e^{-ckL}\left(\frac C{2c}\right)^k\\
&\le C'e^{-cL}
\end{aligned}\end{equation}
where we used the Fredholm expansion first, then \eqref{KnLinftybound} and Hadamard's inequality.
Now the uniform bound \eqref{hittingfar} follows.
The proof of \eqref{hittingfarlim} is similar based on the decay bound
\begin{equation}
\left|K^{L,\infty}(x,y)\right|\le Ce^{-c(x+y+L)}
\end{equation}
that can be deduced from the form of the kernel in \eqref{defKLM} and from \eqref{kappadecay} in Lemma~\ref{lemma:intkerneldecay}.
\end{proof}

\begin{proof}[Proof of Theorem~\ref{thm:YbelowacurveR}]
With the notation \eqref{defanLM}--\eqref{defaLM}, the statement of the theorem is equivalent to
\begin{equation}\label{limitwitha}
1-a_n^{-\infty,\infty}\to1-a^{-\infty,\infty}
\end{equation}
as $n\to\infty$. We give upper and lower bounds on the left-hand side of \eqref{limitwitha} as follows.
Since $1-a_n^{-\infty,\infty}$ is the probability that $Y(b_n(\tau))$ hits $g_n(\tau)$ for some $\tau\in\R$,
we obtain an upper bound for any $L>0$ by writing this event as the union of the events when hitting happens for $\tau\in(-\infty,-L]$, $\tau\in[-L,L]$ or $\tau\in[L,\infty)$ and by using union bound:
\begin{equation}\label{upper}
1-a_n^{-\infty,\infty}\le\left(1-a_n^{-\infty,-L}\right)+\left(1-a_n^{-L,L}\right)+\left(1-a_n^{L,\infty}\right).
\end{equation}
By \eqref{hittingfar}, the third and by symmetry the first term on the right-hand side of \eqref{upper} are uniformly small in $n$ if $L$ is large enough.
For any fixed $L>0$, by taking $n$ large, the second term on the right-hand side of \eqref{upper} is close to $1-a^{-L,L}$.
If $L$ was large enough, $1-a^{-L,L}$ is close enough to $1-a^{-\infty,\infty}$ by \eqref{hittingfarlim}.

A lower bound is obtained by the monotonicity of the events involved, i.e.
\begin{equation}\label{lower}
1-a_n^{-\infty,\infty}\ge1-a_n^{-L,L}
\end{equation}
for any $L>0$.
As $n\to\infty$, the lower bound converges to $1-a^{-L,L}$ which is close to $1-a^{\infty,\infty}$ if $L$ was chosen to be large enough.
The matching upper and lower bounds complete the proof of \eqref{limitwitha} and that of the theorem.
\end{proof}

\section{Identification of the hard-edge tacnode kernel}\label{s:identification}

This section is devoted to the proof of Theorems~\ref{thm:Xbelowh} and~\ref{thm:Xfindimdistr}.
Suppose that a function $g$ is given as in Theorem~\ref{thm:Xbelowh}.
Let us define the kernel
\begin{equation}\label{defKh}\begin{aligned}
K_g(x,y)&=\int_\R\d\xi\int_\R\d\zeta\Ai^{(t_1)}(R+x+\xi)\left(\phi_{t_2-t_1}(\xi,\zeta)-T_{t_1,t_2}^{g-R}(\xi,\zeta)\right)\Ai^{(-t_2)}(R+y+\zeta)\\
&\quad+\int_{\R_-}\d\xi\int_{\R_-}\d\zeta\Ai^{(t_1)}(R+x+\xi)T_{t_1,t_2}^{g-R}(\xi,\zeta)\Ai^{(-t_2)}(R+y-\zeta)\\
&\quad+\int_{\R_-}\d\xi\int_{\R_-}\d\zeta\Ai^{(t_1)}(R+x-\xi)T_{t_1,t_2}^{g-R}(\xi,\zeta)\Ai^{(-t_2)}(R+y+\zeta)\\
&\quad-\int_{\R_-}\d\xi\int_{\R_-}\d\zeta\Ai^{(t_1)}(R+x-\xi)T_{t_1,t_2}^{g-R}(\xi,\zeta)\Ai^{(-t_2)}(R+y-\zeta)
\end{aligned}\end{equation}
with the notations \eqref{defTg} and \eqref{defphi}.
We remark that the kernel $K_g$ can formally be defined by
\begin{equation}
K_g(x,y)=\id(x,y)-\int_{\R_-}\d u\int_{\R_-}\d v\,\Phi_{t_1}^x(u)T_{t_1,t_2}^{g-R}(u,v)\Psi_{t_2}^y(v)
\end{equation}
which involves the difference of two operators that are not trace class.

\begin{proposition}\label{prop:Aibelowh}
For any function $g$ as in Theorem~\ref{thm:Xbelowh}, we have
\begin{equation}\label{Aibelowh}
\P\left(\mathcal A_2(t)\le g(t)\textrm{ for all }t\in\R\right)=\det(\id-K_g)_{L^2(\R_+)}.
\end{equation}
\end{proposition}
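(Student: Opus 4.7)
The plan is to derive the formula for $K_g$ by starting from Theorem~\ref{thm:ReformulationKernel} applied on a finite interval $[L,M]$ with $L<t_1<t_2<M$ and intermediate point $\alpha=t_1$, and then passing to the limits $L\to-\infty$ and $M\to\infty$. Convergence of the left-hand side of~\eqref{Aibelowh} to the full-line probability is immediate from continuity of measure, while convergence of the right-hand side Fredholm determinant is ensured by Remark~\ref{remark3.2}, since $g$ grows quadratically as $R+t^2$ outside the bounded interval $[t_1,t_2]$. The remaining task is therefore to identify the limit kernel with $K_g$.

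To carry out the identification I would use that $g(t)=R+t^2$ on the complement of $[t_1,t_2]$, so the hitting problem reduces to a flat barrier at height $R$ on each of $(-\infty,t_1]$ and $[t_2,\infty)$. On the backward half-line the computation from the ``Flat cut-off'' example of Section~\ref{s:hitting} applies verbatim and, after $L\to-\infty$, yields the formula~\eqref{Mpmflat}, namely
\begin{equation*}
M_-^{t_1}(x,\xi) = \Ai^{(t_1)}(\xi+x)\,\id_{\xi\ge R} + \Ai^{(t_1)}(2R+x-\xi)\,\id_{\xi<R}.
\end{equation*}
For the forward part I would invoke the strong Markov property at $t_2$ and decompose $M_+^{t_1}(\xi,y)$ into three pieces: immediate hitting when $\xi>R$ (yielding $\Ai^{(-t_1)}(\xi+y)$); first crossing somewhere in $[t_1,t_2]$ against the curved barrier $g(t)-t^2$; and first crossing only after $t_2$. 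In the third case the Brownian stays below $g(t)-t^2$ on $[t_1,t_2]$ with density $T^{g-R}_{t_1,t_2}(\xi-R,\eta-R)$ in shifted variables for the endpoint $b(t_2)=\eta\le R$, and then crosses the flat barrier $R$ in $(t_2,\infty)$, which by the same reflection-principle computation contributes the reflected factor $\Ai^{(-t_2)}(2R+y-\eta)$.

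After substituting these decompositions into~\eqref{defKreformulated}, performing the change of variables $u=\xi-R$ and $v=\eta-R$, and letting $L,M\to\pm\infty$, one obtains a sum of cross-terms. The $\id(x,y)$ appearing in the first integral of~\eqref{defKh} arises from the Airy orthogonality identity $\int_\R\Ai^{(t_1)}(x+\xi)\,\Ai^{(-t_1)}(y+\xi)\,\d\xi=\delta(x-y)$, which follows from Lemma~\ref{lem:app} combined with the standard completeness identity $\int_\R\Ai(u+x)\,\Ai(u+y)\,\d u=\delta(x-y)$. The three reflected-Airy integrals on $\R_-^2$ in~\eqref{defKh} arise by pairing the reflected piece $\Ai^{(t_1)}(2R+x-\xi)$ of $M_-^{t_1}$ with the non-reflected piece of $M_+^{t_1}$ and conversely, as well as both reflected pieces together. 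Finally, the ``first crossing in $[t_1,t_2]$'' contribution combines with the unrestricted pieces to reassemble into the $\phi_{t_2-t_1}-T^{g-R}_{t_1,t_2}$ combination in the first integral of~\eqref{defKh}, because the total hitting density against $g(t)-t^2$ on $[t_1,t_2]$ equals $\phi_{t_2-t_1}$ minus the survival density $T^{g-R}_{t_1,t_2}$.

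I expect the main obstacle to be this last bookkeeping step: tracking the two sub-cases of $M_-^{t_1}$ against the three sub-cases of $M_+^{t_1}$, keeping the correct integration domains (distinguishing $\R$ from $\R_-$) and signs, and verifying that the resulting expression matches~\eqref{defKh} term by term. A secondary technical point is to justify the interchange of the $L,M\to\pm\infty$ limits with the Fredholm determinant; this should follow from the super-exponential decay of $\Ai^{(s)}$ in its argument, which keeps the reflected-Airy integrands in trace class uniformly in $L$ and $M$.
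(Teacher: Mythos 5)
Your proposal is essentially the paper's own proof: your strong Markov decomposition of $M_+^{t_1}$ at time $t_2$ is exactly the content of Lemma~\ref{lem:appB}, the flat-barrier computations are \eqref{Mpmflat}, and substituting these into Theorem~\ref{thm:ReformulationKernel} with $\alpha=t_1$ (the paper takes $L=-\infty$, $M=\infty$ directly, justified by Remark~\ref{remark3.2} and the flat cut-off example, rather than via a finite-interval limit) and shifting variables by $R$ yields \eqref{defKh}. One minor correction: no Airy completeness/delta identity is needed and \eqref{defKh} contains no $\id(x,y)$ — the point is precisely to keep the combination $\phi_{t_2-t_1}-T^{g-R}_{t_1,t_2}$ together as in your final bookkeeping sentence, the $\id$ appearing only in the formal (non-trace-class) rewriting stated after \eqref{defKh}.
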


\begin{lemma}\label{lem:appB}
Let $t_1<t_2$.
Then it holds
\begin{equation}\label{appB}
M_+^{t_1}(\xi,y)=\Ai^{(-t_1)}(\xi+y)-\int_\R\d\zeta\,T^g_{t_1,t_2}(\xi,\zeta)\Ai^{(-t_2)}(\zeta+y)+ (T^{g}_{t_1,t_2} M_+^{t_2})(\xi,y).
\end{equation}
\end{lemma}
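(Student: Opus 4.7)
The plan is to decompose the integral defining $M_+^{t_1}(\xi,y)$ according to whether the first exit time $T_+^{\xi,t_1}$ occurs before or after $t_2$, then identify the "stayed below until $t_2$" piece using the transition density $T^g_{t_1,t_2}$, and finally rewrite the "exited before $t_2$" piece by invoking Lemma~\ref{lem:app}.

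First, I would split the defining integral of $M_+^{t_1}(\xi,y)$ at $t=t_2$:
\begin{equation*}
M_+^{t_1}(\xi,y)=\int_{t_1}^{t_2}\!\!\int_\R \P(T_+^{\xi,t_1}\in\d t,X_+^{\xi,t_1}\in\d\zeta)\Ai^{(-t)}(\zeta+y)+\int_{t_2}^{M}\!\!\int_\R \P(T_+^{\xi,t_1}\in\d t,X_+^{\xi,t_1}\in\d\zeta)\Ai^{(-t)}(\zeta+y).
\end{equation*}
On the event $\{T_+^{\xi,t_1}>t_2\}$ the Brownian motion stays below $g(\tau)-\tau^2$ on $[t_1,t_2]$, so by the strong Markov property at time $t_2$, conditionally on $b(t_2)=\zeta'$ the continuation behaves as a fresh Brownian motion starting at $\zeta'$ at time $t_2$. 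Since $T^g_{t_1,t_2}(\xi,\zeta')$ is precisely the density of $b(t_2)$ on this event, the second integral above equals $\int_\R \d\zeta'\,T^g_{t_1,t_2}(\xi,\zeta')M_+^{t_2}(\zeta',y)=(T^g_{t_1,t_2}M_+^{t_2})(\xi,y)$.

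Next, I would rewrite the first (pre-$t_2$) integral. By Lemma~\ref{lem:app} with $s=-t_2$, $t=-t_1$ and the symmetry of $\phi$,
\begin{equation*}
\Ai^{(-t_1)}(\xi+y)=\int_\R \d u\,\phi_{t_2-t_1}(\xi,u)\Ai^{(-t_2)}(u+y).
\end{equation*}
The unconstrained Brownian density $\phi_{t_2-t_1}(\xi,u)$ decomposes, again by the strong Markov property, as
\begin{equation*}
\phi_{t_2-t_1}(\xi,u)=T^g_{t_1,t_2}(\xi,u)+\int_{t_1}^{t_2}\!\!\int_\R \P(T_+^{\xi,t_1}\in\d t,X_+^{\xi,t_1}\in\d\zeta)\,\phi_{t_2-t}(\zeta,u),
\end{equation*}
according to whether the path stays below $g(\tau)-\tau^2$ throughout $[t_1,t_2]$ or exits first. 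Multiplying by $\Ai^{(-t_2)}(u+y)$, integrating in $u$, and applying Lemma~\ref{lem:app} once more to collapse the inner $u$-integral to $\Ai^{(-t)}(\zeta+y)$ yields
\begin{equation*}
\int_{t_1}^{t_2}\!\!\int_\R \P(T_+^{\xi,t_1}\in\d t,X_+^{\xi,t_1}\in\d\zeta)\Ai^{(-t)}(\zeta+y)=\Ai^{(-t_1)}(\xi+y)-\int_\R \d\zeta\,T^g_{t_1,t_2}(\xi,\zeta)\Ai^{(-t_2)}(\zeta+y).
\end{equation*}
Substituting into the original split gives exactly \eqref{appB}.

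The only real subtlety, and what I expect to be the main point to justify carefully, is the strong Markov decomposition in the presence of possibly discontinuous $g$: one must check that $T^g_{t_1,t_2}(\xi,u)$ indeed captures the full "no-exit in $[t_1,t_2]$" density even with the extra $b(t_2)\leq\zeta$ endpoint constraint absorbed into the derivative, and that the exit-then-free-propagation identity is valid jump-by-jump. Once this is in place, everything else is a computation involving Fubini and Lemma~\ref{lem:app}.
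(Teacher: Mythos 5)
Your proposal is correct and follows essentially the same route as the paper: split the defining integral of $M_+^{t_1}$ at $t=t_2$, treat the post-$t_2$ piece by the strong Markov property at $t_2$ (giving $T^g_{t_1,t_2}M_+^{t_2}$), and rewrite the pre-$t_2$ piece via the exit/no-exit decomposition $\phi_{t_2-t_1}=T^g_{t_1,t_2}+(\text{exit density})*\phi$ together with Lemma~\ref{lem:app}. The paper merely organizes the same identities in the reverse order (expanding $\Ai^{(-t)}$ forward to time $t_2$ and identifying the resulting density of $b(t_2)$ on the exit event as $\phi_{t_2-t_1}-T^g_{t_1,t_2}$), so there is no substantive difference.
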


\begin{proof}
Using \eqref{AiPhi} we get
\begin{equation}\begin{aligned}
M_+^{t_1}(\xi,y) &= \int_{t_1}^{t_2} \int_{-\infty}^R \P(T_+^{\xi,t_1}\in\d t,X_+^{\xi,T_1}\in\d\zeta)\int_\R\d v\,\phi_{t_2-t_1}(\zeta,v) \Ai^{(-t_2)}(v+y)\\
&\quad+\int_{t_2}^\infty \int_{-\infty}^R \P(T_+^{\xi,t_1}\in\d t,X_+^{\xi,t_1}\in\d\zeta)\Ai^{(-t)}(v+y).
\end{aligned}\end{equation}
The first term can be written as
\begin{equation}\begin{aligned}
&\int_\R\d v\,\P_{b(t_1)=\xi}(T_+^{\xi,t_1}\in [t_1,t_2], b(t_2)\in\d v) \Ai^{(-t_2)}(v+y)\\
&\qquad=\int_\R\d v\,\phi_{t_2-t_1}(\xi,v)\Ai^{(-t_2)}(v+y)-\int_\R\d v\,T^g_{t_1,t_2}(\xi,v)\Ai^{(-t_2)}(v+y)\\
&\qquad=\Ai^{(-t_1)}(\xi+y)-\int_{-\infty}^R\d v\,T^{g}_{t_1,t_2}(\xi,v)\Ai^{(-t_2)}(v+y).
\end{aligned}\end{equation}
The second one can be written, by decomposing with respect to the value of the Brownian bridge at time $t_2$, as
\begin{multline}
\int\d v\,T^g_{t_1,t_2}(\xi,v)\int_{t_2}^\infty \int_\R \P(T_+^{v,t_2}\in\d t,X_+^{v,t_2}\in\d\zeta) \Ai^{(-t)}(\zeta+y)\\
=\int_{-\infty}^R\d v\,T^g_{t_1,t_2}(\xi,v) M_+^{t_2}(v,y).
\end{multline}
\end{proof}

\begin{proposition}\label{prop:compatibility}
For any $t_1<t_2$ and $u,v\in\R_-$, the following compatibility relations are satisfied:
\begin{align}
\int_{\R_-}\d u\,\Phi_{t_1}^\xi(u)T^0_{t_1,t_2}(u,v)&=\Phi_{t_2}^\xi(v),\label{PhiT}\\
\int_{\R_-}\d v\,T^0_{t_1,t_2}(u,v)\Psi_{t_2}^\zeta(v)&=\Psi_{t_1}^\zeta(u),\label{TPsi}
\end{align}
where transition operator $T^0_{t_1,t_2}$ is the special case of \eqref{defTg} for the $g\equiv0$ function.
\end{proposition}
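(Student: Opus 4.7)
The plan is to reduce the first identity to the free-Brownian semigroup intertwining of Lemma~\ref{lem:app} by exploiting the antisymmetry of $\Phi_t^\xi$ in $u$; the second identity then follows by a time-reversal argument. First I would verify that $\Ai^{(t)}(x)=e^{2t^3/3+tx}\Ai(t^2+x)$ satisfies the heat equation $\partial_t\Ai^{(t)}(x)=\partial_x^2\Ai^{(t)}(x)$, which is a direct consequence of the Airy ODE $\Ai''(z)=z\Ai(z)$. Applied termwise via Lemma~\ref{lem:app}, this yields the free intertwining on all of $\R$:
\begin{equation*}
\int_\R\d u\,\Phi_{t_1}^\xi(u)\,\phi_{t_2-t_1}(u,v)=\Phi_{t_2}^\xi(v).
\end{equation*}

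Next, I would split $\int_\R=\int_{\R_-}+\int_{\R_+}$ and change variables $u\mapsto -u$ in the second piece. Using the antisymmetry $\Phi_{t_1}^\xi(-u)=-\Phi_{t_1}^\xi(u)$ together with the reflection symmetry $\phi_s(-u,v)=\phi_s(u,-v)$ of the Gaussian kernel, this produces $\int_{\R_+}\d u\,\Phi_{t_1}^\xi(u)\phi_{t_2-t_1}(u,v)=-\int_{\R_-}\d u\,\Phi_{t_1}^\xi(u)\phi_{t_2-t_1}(u,-v)$, whence
\begin{equation*}
\int_{\R_-}\d u\,\Phi_{t_1}^\xi(u)\bigl[\phi_{t_2-t_1}(u,v)-\phi_{t_2-t_1}(u,-v)\bigr]=\Phi_{t_2}^\xi(v).
\end{equation*}
The bracketed expression is the reflection-principle transition density on $\R_-$. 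The remaining task is to identify it with $T^0_{t_1,t_2}(u,v)$ as defined in \eqref{defTg}: this is carried out by the change of variable $\tilde b(t)=b(t)+t^2$ which straightens the parabolic barrier $-t^2$ into the constant barrier at $0$, with the resulting exponential prefactors being precisely those already absorbed into the definition of $\Ai^{(t)}$, so that they cancel in the combined identity and yield \eqref{PhiT}.

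For \eqref{TPsi}, I would apply the time reversal $t\mapsto-t$. This substitution leaves the parabola $-t^2$ invariant, so that $T^0_{t_1,t_2}(u,v)$ transforms into $T^0_{-t_2,-t_1}(u,v)$, while $\Ai^{(t)}$ is sent to $\Ai^{(-t)}$ and hence $\Phi$ to $\Psi$; relabeling converts \eqref{PhiT} into \eqref{TPsi}.

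The main obstacle, as I see it, is the identification step: matching the reflection-principle kernel $\phi_{t_2-t_1}(u,v)-\phi_{t_2-t_1}(u,-v)$ with $T^0_{t_1,t_2}(u,v)$ as written in \eqref{defTg}, since the two describe Brownian motion killed at different barriers (the constant level $0$ versus the moving parabola $-t^2$). The reconciliation requires a careful bookkeeping of the Girsanov-type exponential produced by straightening the barrier against the exponential factor $e^{2t^3/3+tx}$ built into $\Ai^{(t)}$; once these cancellations are handled, the remaining pieces of the computation are standard Gaussian and Airy manipulations, so that the bulk of the proof is then routine.
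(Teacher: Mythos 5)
Your first two steps are essentially the paper's own proof, run in the opposite direction: the heat-equation property $\partial_t\Ai^{(t)}=\partial_x^2\Ai^{(t)}$ together with Lemma~\ref{lem:app}, then the splitting $\int_\R=\int_{\R_-}+\int_{\R_+}$ with the change of variables $u\to-u$, the antisymmetry of $\Phi^\xi_{t}$ and the symmetry of the Gaussian kernel. Combined with the observation that $\phi_{t_2-t_1}(u,v)-\phi_{t_2-t_1}(u,-v)$ is, by the reflection principle, the transition density of a Brownian motion with diffusion coefficient $2$ killed at the constant level $0$, this already proves \eqref{PhiT}; your time-reversal derivation of \eqref{TPsi} is also fine (the paper treats it the same way, ``seen similarly'').

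The genuine problem is your final identification step. You read ``the special case of \eqref{defTg} for $g\equiv0$'' literally, so that the barrier is the moving parabola $-t^2$, and you propose to straighten it via $\tilde b(t)=b(t)+t^2$ and cancel the resulting Girsanov factor against the prefactor $e^{2t^3/3+tx}$ built into $\Ai^{(t)}$. This cannot work: for the time-dependent drift produced by the parabolic shift, the Cameron--Martin exponent contains, after integration by parts, the path functional $\int_{t_1}^{t_2}b(t)\,\d t$, so it is not a product of endpoint factors, and the absorbed transition density below a parabolic barrier is not an endpoint conjugation of the reflected kernel (it is given by an Airy eigenfunction expansion, not a two-term reflection formula). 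In fact, with that kernel \eqref{PhiT} would be false: the property that lets the killed semigroup preserve $\Phi^\xi_t$ is $\Phi^\xi_t(0)=0$, and $\Phi^\xi_t$ does not vanish on the moving barrier $u=-t^2$. The resolution is interpretive rather than computational: everywhere $T^0_{t_1,t_2}$ is used (the term $-\id_{t_1<t_2}T^0_{t_1,t_2}$ in \eqref{defKext}, the factorization \eqref{Tg-Rt1tk}, and the paper's proof of this proposition, which opens by writing $T^0_{t_1,t_2}(u,v)=\phi_{t_2-t_1}(u,v)-\phi_{t_2-t_1}(u,-v)$ by the reflection principle) it denotes \eqref{defTg} with the entire barrier $g(t)-t^2$ set to zero, i.e.\ the killed-at-zero kernel; the phrase ``$g\equiv0$'' should be read as ``$g(t)-t^2\equiv0$''. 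With that reading your first two steps constitute a complete proof, and the straightening/Girsanov step should simply be deleted rather than repaired.
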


\begin{proof}[Proof of Proposition~\ref{prop:compatibility}]
By the reflection principle
\begin{equation}
T^0_{t_1,t_2}(x,y)=\phi_{t_2-t_1}(y-x)-\phi_{t_2-t_1}(y+x)
\end{equation}
with the notation \eqref{defphi}.
Hence the left-hand side of \eqref{PhiT} is equal to the sum of four integrals over $\R_-$ after plugging in the definition of $\Phi_{t_1}^\xi(u)$.
With a change of variables $u\to-u$ one turns two of them into integrals over $\R_+$ which can be combined with the remaining two to get two integrals over $\R$.
Then Lemma~\ref{lem:app} applies and proves \eqref{PhiT}.
The identity \eqref{TPsi} is seen similarly.
\end{proof}

\begin{proof}[Proof of Proposition~\ref{prop:Aibelowh}]
We first rewrite the statement of Lemma~\ref{lem:appB}.
Note that since $g=R$ on $[t_2,\infty)$, $M_+^{t_2}(\xi,y)=\Ai^{(-t_2)}(2R+y-\xi)$ for $\xi<R$ by \eqref{Mpmflat}.
Hence by using Lemma~\ref{lem:app}, we can write \eqref{appB} as
\begin{multline}\label{M+t1}
M_+^{t_1}(\xi,y)=\int_{-\infty}^R\d\zeta\left(\phi_{t_2-t_1}(\xi,\zeta)-T^g_{t_1,t_2}(\xi,\zeta)\right)\Ai^{(-t_2)}(\zeta+y)\\
+\int_{-\infty}^R\d\zeta\,T^{g}_{t_1,t_2}(\xi,\zeta)\Ai^{(-t_2)}(2R+y-\zeta).
\end{multline}
By \eqref{Mpmflat} again, we also have
\begin{equation}\label{M-t1}
M_-^{t_1}(x,\xi)=\Ai^{(t_1)}(2R+x-\xi)\id_{\xi<R}+\Ai^{(t_1)}(x+\xi)\id_{\xi\ge R}.
\end{equation}
Then by Theorem~\ref{thm:ReformulationKernel} with the choice $\alpha=t_1$, $L=-\infty$ and $M=\infty$ and by using \eqref{M-t1}
\begin{equation}\begin{aligned}
K_h(x,y)&=\int_{-\infty}^R\d\xi \Ai^{(t_1)}(2R+x-\xi)\Ai^{(-t_1)}(\xi+y)\\
&\quad+\int_R^\infty\d\xi \Ai^{(t_1)}(\xi+x)\Ai^{(-t_1)}(2R+y-\xi)\\
&\quad+\int_{-\infty}^R\d\xi\left(\Ai^{(t_1)}(\xi+x)-\Ai^{(t_1)}(2R+x-\xi)\right) M_+^{t_1}(\xi,y).
\end{aligned}\end{equation}
Direct computations yield \eqref{defKh} which involve the use of \eqref{M+t1}, Lemma~\ref{lem:app} and the shift of variables $\xi\to\xi-R$ and $\zeta\to\zeta-R$.
\end{proof}

Now we are ready to prove the two main theorems.
\begin{proof}[Proof of Theorem~\ref{thm:Xbelowh}]
By Proposition~\ref{prop:Aibelowh}, we need to prove the identity
\begin{equation}\label{detratio}
\frac{\det(\id-K_g)_{L^2(\R_+)}}{\det(\id-K_R)_{L^2(\R_+)}}=\det(\id-K_{t_1,t_1}+T^{g-R}_{t_1,t_2}K_{t_2,t_1})_{L^2(\R_-)}.
\end{equation}
We essentially follow the steps of the proof of Theorem 2.4 in~\cite{FV16}.
Consider a function $g$ that satisfies the conditions of Theorem~\ref{thm:Xbelowh}.
Observe that by \eqref{defKh}, the difference of kernels can be written as
\begin{equation}\label{kerneldiff}
K_g(x,y)-K_R(x,y)=\int_{\R_-}\d u\int_{\R_-}\d v\,\Phi_{t_1}^x(u)\left(T^0_{t_1,t_2}(u,v)-T_{t_1,t_2}^{g-R}(u,v)\right)\Psi_{t_2}^y(v).
\end{equation}
where $K_R$ is the kernel corresponding to $g=R$.

Let us recall that with $P_a$ we mean the projection on $[a,\infty)$ and with $\ol P_a$ the one on $(-\infty,a)$.
The ratio of Fredholm determinants on the left-hand side of \eqref{detratio} can be written as
\begin{equation}\label{detratiocompute}\begin{aligned}
\frac{\det(\id-K_g)_{L^2(\R_+)}}{\det(\id-K_R)_{L^2(\R_+)}}&=\det(\id-P_0(K_g-K_R)P_0(\id-K_R)^{-1}P_0)_{L^2(\R)}\\
&=\det(\id-P_0\Phi_{t_1}\ol P_0(T^0_{t_1,t_2}-T_{t_1,t_2}^{g-R})\ol P_0\Psi_{t_2}P_0(\id-K_R)^{-1}P_0)_{L^2(\R)}\\
&=\det(\id-(T^0_{t_1,t_2}-T_{t_1,t_2}^{g-R})\ol P_0\Psi_{t_2}P_0(\id-K_R)^{-1}P_0\Phi_{t_1})_{L^2(\R_-)}
\end{aligned}\end{equation}
where we used \eqref{kerneldiff} in the second equality above and the cyclic property of the determinant in the third.
By noting that on the right-hand side of \eqref{detratiocompute} the kernel $\Psi_{t_2}(\id-K_R)^{-1}\Phi_{t_1}=K_{t_2,t_1}$
and by the compatibility relation \eqref{TPsi}, the result \eqref{detratio} follows.
\end{proof}

\begin{proof}[Proof of Theorem~\ref{thm:Xfindimdistr}]
We apply Theorem~\ref{thm:Xbelowh} for the function
\begin{equation}\label{specg}
g(t)=\left\{\begin{array}{ll}u_l+t^2&\mbox{if }t=t_l, l\in\{1,\ldots,k\},\\R+t^2&\mbox{if }t\neq \{t_1,\ldots,t_k\}.\end{array}\right.
\end{equation}
Without loss of generality we may assume that $t_1<t_2<\dots<t_k$.
Theorem~\ref{thm:Xbelowh} implies that
\begin{equation}\label{Xbelowspecg}
\lim_{n\to\infty} \P\bigg(\bigcap_{\ell=1}^k \{X_n^{R,{\rm resc}}(t_\ell)\le u_\ell\}\bigg)=\det(\id-K_{t_1,t_1}+T^{g-R}_{t_1,t_k}K_{t_k,t_1})_{L^2(\R_-)}
\end{equation}
with the function given in \eqref{specg} on the right-hand side above.
Note that the special form of $g$ implies that the transition operator on the right-hand side of \eqref{Xbelowspecg} can be written as
\begin{equation}\label{Tg-Rt1tk}
T^{g-R}_{t_1,t_k}=\ol P_{u_1-R}T^0_{t_1,t_2}\ol P_{u_2-R}T^0_{t_2,t_3}\dots T^0_{t_{k-1},t_k}\ol P_{u_k-R}.
\end{equation}
Then the Fredholm determinant on the right-hand side of \eqref{Xbelowspecg} with $T^{g-R}_{t_1,t_k}$ replaced by the path integral kernel on the right-hand side of \eqref{Tg-Rt1tk}
is equal to the right-hand side of \eqref{Xfindimdistr} using Theorem 3.3 of~\cite{BCR13}.
To check the condition of the theorem, one sets $X=\R_-$ with operators $\ol Q_{t_i}=\ol P_{t_i}$, $\mathcal W_{t_i,t_j}=T^0_{t_i,t_j}$ and kernel $K_{t_i}=K_{t_i,t_i}$.
The compatibility assumptions follow from the form of the kernel $K_{t_i,t_j}=K^\ext(t_i,\cdot;t_j,\cdot)$ given in \eqref{defKext} and from Proposition~\ref{prop:compatibility}.
Boundedness and trace class property of certain operators are proved by observing that $Q_{t_i}$ is a projection to a finite interval.
However $K_{t_i}$ itself is not in trace class.
\end{proof}

\section{Proofs of asymptotics}\label{s:asympproofs}

As a preparation for the proof of Propositions~\ref{prop:pqconv},~\ref{prop:pbound},~\ref{prop:qbound} and~\ref{prop:improvedbound},
we rewrite the rescaled functions $p^{(n)}$ and $q^{(n)}$ using their definitions \eqref{defp}--\eqref{defq} and the scaling \eqref{scaling} as
\begin{equation}\label{pqrewrite}\begin{aligned}
P_l^{(n)}(v+j) &=2^{-5/6} n^{1/3}\frac{-1}{2\pi i}\oint_{\Gamma_1}\frac{\d z}ze^{-ng_0(z,0,0)+n^{2/3}tg_1(z)-n^{1/3}(y+\zeta)g_2(z)},\\
Q_{L}^{(n)}(u+i)&=2^{-5/6} n^{1/3}\frac1{2\pi i}\oint_{\Gamma_0}\frac{\d w}we^{ng_0(w,0,0)-n^{2/3}Lg_1(w)+n^{1/3}(\xi+x)g_2(w)}
\end{aligned}\end{equation}
where
\begin{multline}\label{defg0}
g_0(z,s,r)=\left(\frac1{2\sqrt2}-\frac12-s+r\right)\log z+\left(\frac12-\frac1{2\sqrt2}-s\right)\log(1-z)\\
+\left(\frac12+\frac1{2\sqrt2}+s\right)\log(1+z)+(r-2s)\log(\sqrt2+1)-\frac{\log2}2
\end{multline}
and
\begin{align}
g_1(z)&=2^{-7/6}\log\left(\frac{z(1-z)}{1+z}\right)+2^{-1/6}\log(\sqrt2+1),\label{defg1}\\
g_2(z)&=2^{-5/6}\log z+2^{-5/6}\log(\sqrt2+1).\label{defg2}
\end{align}

The integral formulas on the right-hand side of \eqref{pqrewrite} are useful when the rescaled variables $t$, $y$, $\xi+x$ remain bounded.
In order to understand the decay properties of $p^{(n)}$ and $q^{(n)}$, one has to bound them also when the above rescaled parameters are macroscopic
which corresponds to choosing the arguments $s$ and $r$ of $g_0(z,s,r)$ to be non-zero, since the asymptotics of the integrals in \eqref{pqrewrite}
are mainly determined by the behaviour of the function $g_0(z,s,r)$ along their integration contours.
First we keep the rescaled parameter $t$ bounded and consider the critical points of the corresponding function $g_0(z,0,r)$ which are
\begin{equation}\label{criticalpoints}
z_r^\pm=\frac{\sqrt2\pm\sqrt{8\sqrt2r+16r^2}}{2+\sqrt2+4r}.
\end{equation}
$z_r^\pm$ are two complex conjugate numbers for $r\in(-1/\sqrt2,0)$ and real otherwise.
We will pass through these critical points after the contours $\gamma$ and $\Gamma$ have been deformed.

\begin{lemma}\label{lemma:steep}
Fix $r\in\R$.
For $\theta\in[0,\pi]$, the function $\Re(g_0(w,0,r))$ decreases along the contour $w=w(\theta)=\rho e^{i\theta}$ as long as
\begin{equation}\label{steepwcond}
\cos\theta<\frac{1+\rho^2}{2\sqrt2\rho}
\end{equation}
holds and it increases if \eqref{steepwcond} does not hold.
For $\theta\in[\pi,2\pi]$, the opposite statement is true.

For $\phi\in[0,\pi]$, the function $-\Re(g_0(z,0,r))$ decreases along the contour $z=z(\phi)=1-Re^{-i\phi}$ as long as $Q>0$ holds with
\begin{equation}\label{defQ}
Q=6\sqrt2-4+(3\sqrt2+2)R^2+16r+4R^2r-8\sqrt2R(1+\sqrt2r)\cos\phi
\end{equation}
and it increases if $Q<0$ holds.
For $\phi\in[\pi,2\pi]$, the opposite statement is true.
\end{lemma}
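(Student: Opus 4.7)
The proof is a direct calculation: I parametrize each circular contour and check the sign of the derivative of the real part. Writing
$$g_0'(z,0,r)=\frac{a}{z}-\frac{b}{1-z}+\frac{c}{1+z},\qquad a=\frac{1}{2\sqrt{2}}-\frac{1}{2}+r,\ \ b=\frac{1}{2}-\frac{1}{2\sqrt{2}},\ \ c=\frac{1}{2}+\frac{1}{2\sqrt{2}},$$
I would record at the outset the useful identities $b+c=1$ and $c-b=1/\sqrt{2}$.

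For the first contour I parametrize $w=\rho e^{i\theta}$, so that $dw/d\theta=iw$ and
$$\frac{d}{d\theta}\Re(g_0(w,0,r))=-\Im(wg_0'(w,0,r))=-\Im\!\left(a-\frac{bw}{1-w}+\frac{cw}{1+w}\right).$$
The constant $a$ is real and drops out, which explains why the criterion on this contour is $r$-independent. Using $\Im(w/(1\mp w))=\pm\Im(w)/|1\mp w|^2$ together with $|1\pm w|^2=1\pm 2\rho\cos\theta+\rho^2$, the derivative reduces to $-\rho\sin\theta$ times a bracket whose sign matches that of $(c-b)(1+\rho^2)-2\rho\cos\theta(b+c)=(1+\rho^2)/\sqrt{2}-2\rho\cos\theta$. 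For $\theta\in[0,\pi]$ one has $\sin\theta\ge 0$, so $\Re(g_0)$ decreases iff $\cos\theta<(1+\rho^2)/(2\sqrt{2}\rho)$; the range $[\pi,2\pi]$ follows from the sign flip of $\sin\theta$.

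For the second contour I parametrize $z=1-Re^{-i\phi}$, so $dz/d\phi=i(1-z)$ and
$$\frac{d}{d\phi}\bigl[-\Re(g_0(z,0,r))\bigr]=\Im((1-z)g_0'(z,0,r)).$$
In the expansion $(1-z)g_0'=a(1-z)/z-b+c(1-z)/(1+z)$ the $-b$ term is real, while $\Im((1-z)/z)=-\Im(z)/|z|^2$ and $\Im((1-z)/(1+z))=-2\Im(z)/|1+z|^2$. With $\Im(z)=R\sin\phi$ this gives $-R\sin\phi\bigl(a/|z|^2+2c/|1+z|^2\bigr)$. Multiplying by $4|z|^2|1+z|^2>0$ and substituting $|z|^2=1-2R\cos\phi+R^2$ and $|1+z|^2=4-4R\cos\phi+R^2$, a direct expansion produces exactly the polynomial $Q$ of \eqref{defQ}; the coefficients of the monomials $1,\,r,\,R\cos\phi,\,R^2,\,R^2 r$ match upon substituting the values of $a$ and $c$. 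For $\phi\in[0,\pi]$, $\sin\phi\ge 0$, hence $-\Re(g_0)$ decreases iff $Q>0$; the case $\phi\in[\pi,2\pi]$ is symmetric.

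The only nontrivial step is the algebraic identification $4(a|1+z|^2+2c|z|^2)=Q$, which amounts to verifying five matching coefficients; everything else is a routine derivative-of-a-real-part computation on a parametrized circular contour, so I do not foresee an analytic obstacle.
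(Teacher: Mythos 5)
Your proposal is correct and takes essentially the same route as the paper: parametrize each circular contour, differentiate the real part, and reduce the sign of the derivative to that of $\sqrt2(1+\rho^2)-4\rho\cos\theta$ (equivalently \eqref{steepwcond}) on the first contour and of $Q$ on the second, the only cosmetic difference being that the paper differentiates $\Re(g_0)$ written via $\Re\log z=\tfrac12\log|z|^2$ while you use the chain rule with $g_0'$ and take imaginary parts, arriving at the same expressions as \eqref{Reg0derivative}. One small slip: the intermediate identity should read $\Im\bigl(w/(1\mp w)\bigr)=+\Im(w)/|1\mp w|^2$ in \emph{both} cases (no alternating sign); with the correct signs one indeed obtains the bracket $(c-b)(1+\rho^2)-2(b+c)\rho\cos\theta$ you state, and the algebraic identification $4\bigl(a|1+z|^2+2c|z|^2\bigr)=Q$ does check out, so the conclusion is unaffected.
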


\begin{proof}[Proof of Lemma~\ref{lemma:steep}]
By using the identity $\Re(\log z)=\frac12\log|z|^2$ for any complex number $z$, one gets for the contour $w=\rho e^{i\theta}$ that
\begin{equation}\label{reg0rewrite}\begin{aligned}
\Re(g_0(\rho e^{i\theta},0,r))
&=\left(\frac1{2\sqrt2}-\frac12+r\right)\log\rho+\left(\frac12-\frac1{2\sqrt2}\right)\frac12\log(1+\rho^2-2\rho\cos\theta)\\
&\quad+\left(\frac12+\frac1{2\sqrt2}\right)\frac12\log(1+\rho^2+2\rho\cos\theta)+r\log(\sqrt2+1)-\frac{\log2}2.
\end{aligned}\end{equation}
Its derivative simplifies to
\begin{equation}
\frac{\d}{\d\theta}\Re(g_0(\rho e^{i\theta},0,r))=-\frac{\rho\sin\theta\left(\sqrt2(1+\rho^2)-4\rho\cos\theta\right)}{2|1-w|^2|1+w|^2}.
\end{equation}
For $\theta\in[0,\pi]$, as long as the factor in parenthesis on the right-hand side above $\sqrt2(1+\rho^2)-4\rho\cos\theta>0$, i.e.\ if \eqref{steepwcond} holds,
then $\Re(g_0(\rho e^{i\theta},0,r))$ decreases, otherwise $\Re(g_0(\rho e^{i\theta},0,r))$ increases.

For the contour $z=1-Re^{-i\phi}$, similarly to \eqref{reg0rewrite}, one has
\begin{equation}\label{g0oncircle}\begin{aligned}
&-\Re(g_0(1-Re^{-i\phi},0,r))\\
&\qquad=-\left(\frac1{2\sqrt2}-\frac12+r\right)\frac12\log(1+R^2-2R\cos\phi)-\left(\frac12-\frac1{2\sqrt2}\right)\log R\\
&\qquad\quad-\left(\frac12+\frac1{2\sqrt2}\right)\frac12\log(4+R^2-4R\cos\phi)+r\log(\sqrt2+1)-\frac{\log2}2.
\end{aligned}\end{equation}
Its derivative can be calculated to be
\begin{equation}\label{Reg0derivative}
-\frac{\d}{\d\phi}\Re(g_0(1-Re^{-i\phi},0,r))=-\frac{RQ\sin\phi}{4|z|^2|1+z|^2}
\end{equation}
with $Q$ given in \eqref{defQ}.
For $\theta\in[0,\pi]$, if $Q>0$, then $-\Re(g_0(1-Re^{-i\phi},0,r))$ decreases, otherwise $-\Re(g_0(1-Re^{-i\phi},0,r))$ increases.
\end{proof}

\begin{proof}[Proof of Proposition~\ref{prop:pqconv}]
We choose the integration contours first.
Since $r=0$ in \eqref{pqrewrite}, the (double) critical point for $g_0(z,0,0)$ is at $z_0^\pm=\sqrt2-1$ by \eqref{criticalpoints}.
By the first part of Lemma~\ref{lemma:steep}, the function $\Re(g_0(w,0,0))$ is steep descent for the contour $w=\rho e^{i\theta}$ if $\rho\in(0,\sqrt2-1]$.
By the steep descent property, we mean that the function decreases along both arcs of the contour away from the critical point until the antipodal point.
The reason for the steep descent property of $\Re(g_0(w,0,0))$ is that the right-hand side of \eqref{steepwcond} is a decreasing function of $\rho$ and it is equal to $1$ for $\rho=\sqrt2-1$.
On the other hand, the function $-\Re(g_0(z,0,r))$ is steep descent for the contour $z=1-Re^{-i\phi}$ for $R\in(0,2-\sqrt2]$.
This is because $6\sqrt2-4+(3\sqrt2+2)R^2\ge8\sqrt2R$ for $R\in(0,2-\sqrt2]$, hence $Q\ge0$ for all values of $\phi$.

By Taylor expansion around $\sqrt2-1$, we obtain
\begin{equation}\label{gTaylor}\begin{aligned}
g_0(z,0,0)&=-\frac13\cdot2^{-5/2}(\sqrt2+1)^3(z-(\sqrt2-1))^3+\O((z-(\sqrt2-1))^4),\\
g_1(z)&=-2^{-5/3}(\sqrt2+1)^2(z-(\sqrt2-1))^2+\O((z-(\sqrt2-1))^3),\\
g_2(z)&=2^{-5/6}(\sqrt2+1)(z-(\sqrt2-1))+\O(z-(\sqrt2-1)^2).
\end{aligned}\end{equation}
The deformed integration contours are the following.
In a small neighbourhood of the critical point, we follow the direction of steepest descent, i.e.\ we use
\begin{equation}
\Gamma^{\delta,\pm}=\{e^{\pm i\pi/3}t:t\in[0,\delta]\},\qquad\gamma^{\delta,\pm}=\{e^{\pm2i\pi/3}t:t\in[0,\delta]\}.
\end{equation}
Let $\delta>0$ be small.
We deform the contour $\Gamma_1$ to $\Gamma^{\delta,\pm}$ completed by a circle around $1$ with radius slightly smaller than $2-\sqrt2$.
The function $-\Re(g_0(w,0,0))$ is steep descent along the $\Gamma^{\delta,\pm}$ part of the contour by the Taylor expansion \eqref{gTaylor}
and by a previous observation in this proof along the circular part.
Similarly, $\Gamma_0$ can be replaced $\gamma^{\delta,\pm}$ completed by a circle around $0$ with a radius slightly smaller than $\sqrt2-1$.
Then the function $\Re(g_0(w,0,0))$ is steep descent along the $\gamma^{\delta,\pm}$ by the Taylor expansion \eqref{gTaylor} and by a previous observation along the circular part.
By the Taylor expansion \eqref{gTaylor} again, the circular parts of the contours can be omitted by making an error of order $e^{-c\delta^3n}$.

By writing the Taylor expansion \eqref{gTaylor} to the right-hand side of \eqref{pqrewrite} and after the change of variables
$Z=2^{-5/6}(\sqrt2+1)(z-(\sqrt2-1))n^{1/3}$ and $W=2^{-5/6}(\sqrt2+1)(w-(\sqrt2-1))n^{1/3}$, we get
\begin{equation}\label{pZint}\begin{aligned}
P_l^{(n)}(v+j)&=\frac1{2\pi i}\int_{e^{-i\pi/3}\delta n^{1/3}}^{e^{i\pi/3}\delta n^{1/3}}\d Ze^{Z^3/3-tZ^2-(y+\zeta)Z}\\
&\quad+\frac1{2\pi i}\int_{e^{-i\pi/3}\delta n^{1/3}}^{e^{i\pi/3}\delta n^{1/3}}\d Ze^{Z^3/3-tZ^2-(y+\zeta)Z}\left(e^{n^{-1/3}\O(Z^4+Z^2)}-1\right),
\end{aligned}\end{equation}
and
\begin{equation}\label{qWint}\begin{aligned}
Q_{L}^{(n)}(u+i)&=\frac1{2\pi i}\int_{e^{-2i\pi/3}\delta n^{1/3}}^{e^{2i\pi/3}\delta n^{1/3}}\d We^{-W^3/3+LW^2+(\xi+x)W}\\
&\quad+\frac1{2\pi i}\int_{e^{-2i\pi/3}\delta n^{1/3}}^{e^{2i\pi/3}\delta n^{1/3}}\d We^{-W^3/3+LW^2+(\xi+x)W}\left(e^{n^{-1/3}\O(W^4+W^2)}-1\right).
\end{aligned}\end{equation}
Using the bound $|e^x-1|\le|x|e^{|x|}$, one can see that the last term on the right-hand side of \eqref{pZint} can be bounded as
\begin{multline}
\left|\frac1{2\pi i}\int_{e^{-i\pi/3}\delta n^{1/3}}^{e^{i\pi/3}\delta n^{1/3}}\d Ze^{Z^3/3-tZ^2-(y+\zeta)Z}\left(e^{n^{-1/3}\O(Z^4+Z^2)}-1\right)\right|\\
\le\left|\frac{n^{-1/3}}{2\pi i}\int_{e^{-i\pi/3}\delta n^{1/3}}^{e^{i\pi/3}\delta n^{1/3}}\d Z\O(Z^4+Z^2)e^{Z^3/3-tZ^2-(y+\zeta)Z+n^{-1/3}\O(Z^4+Z^2)}\right|
\end{multline}
which is integrable and goes to $0$ as $n^{-1/3}$.
The last error term in \eqref{qWint} can be bounded similarly.
One can extend the integration path to infinity in the first term on the right-hand side of \eqref{pZint} and \eqref{qWint} by making an error of order $e^{-c\delta^3 n}$.
Computing the respective Airy integrals yield \eqref{pconv} and \eqref{qconv}.
The bound on the error terms above are uniform on compact intervals of $\zeta+y$ and $x+\xi$, hence the uniformity of the convergence follows.
\end{proof}

\begin{proof}[Proof of Proposition~\ref{prop:pbound}]
Assume first that $y+\zeta>0$ and let $y+\zeta=2^{5/6}rn^{2/3}$ where $r>0$ is a macroscopic parameter for which $j+2^{-5/6}\zeta n^{1/3}=rn$.
Then instead of the first equation in \eqref{pqrewrite}, we write
\begin{equation}\label{prewrite}
P_l^{(n)}(v+j)=2^{-5/6} n^{1/3}\frac{-1}{2\pi i}\oint_{\Gamma_1}\frac{\d z}z\,e^{-ng_0(z,0,r)+n^{2/3}tg_1(z)}.
\end{equation}
The function $g_0(z,0,r)$ has two critical points given in \eqref{criticalpoints} which are $\O(r^{1/2})$ away from $\sqrt2-1$ for small $r>0$.
We pass by the one above $\sqrt2-1$ where $-\Re(g_0(z,0,r))$ is smaller.

For a small $\varepsilon>0$, we choose
\begin{equation}\label{defalpha}
\alpha=\left\{\begin{array}{ll} \sqrt2-1+r^{1/2} & \mbox{if }r\le\varepsilon,\\ \sqrt2-1+\varepsilon^{1/2} & \mbox{if }r>\varepsilon.\end{array}\right.
\end{equation}
Next we show that the function $-\Re(g_0(z,0,r))$ is steep descent along the contour $z(\phi)=1-(1-\alpha)e^{-i\phi}$ for any $r>0$ if $\varepsilon$ is small enough,
i.e.\ it attains its maximum at $\alpha$ and it decreases along both arcs until the point $2-\alpha$.
To this end, we have to show that the quantity $Q$ defined in \eqref{defQ} is positive along the contour.
First note that $\phi\mapsto Q$ is increasing for $\phi\in[0,\pi]$, hence it is enough to prove positivity for $\phi=0$.
Further, $Q(\phi=0)$ is a linear function of $r$, that is,
\begin{equation}
Q(\phi=0)=6\sqrt2-4-8\sqrt2R+(3\sqrt2+2)R^2+4(R-2)^2r.
\end{equation}
The constant term $6\sqrt2-4-8\sqrt2R+(3\sqrt2+2)R^2$ is positive for any $R<2-\sqrt2$, hence it is positive for $R=1-\alpha$ with $\alpha$ defined in \eqref{defalpha}.
This proves that $Q(\phi=0)>0$ for any $r>0$ and that the quantity $Q>0$ along the contour $z(\phi)=1-(1-\alpha)e^{-i\phi}$, i.e.\ the steep descent property holds.

By the steep descent property, we can localize the integral
\begin{multline}\label{plocalize}
\big|P_l^{(n)}(v+j)\big|=2^{-5/6} n^{1/3}e^{\Re(-ng_0(\alpha,0,r)+n^{2/3}tg_1(\alpha))}\\
\times\left(\left|\frac1{2\pi i}\int_{-\delta}^\delta\d\phi\,(1-\alpha)e^{n(-g_0(z(\phi),0,r)+g_0(\alpha,0,r))+n^{2/3}t(g_1(z(\phi))-g_1(\alpha))}\right|+\O(e^{-\wt cn})\right)
\end{multline}
with a constant $\wt c>0$ independent of $n$ where $z(\phi)=1-(1-\alpha)e^{-i\phi}$.
Series expansion yields
\begin{equation}
-\Re(g_0(z(\phi),0,r)-g_0(\alpha,0,r))=-\gamma\phi^2+\O(\phi^3)
\end{equation}
with
\begin{equation}
\gamma=(1-\alpha)\left(\frac{4+2\sqrt2}{(1+\alpha)^2}-\frac{2-\sqrt2}{\alpha^2}\right)=(4+2\sqrt2)(\alpha-(\sqrt2-1))+\O((\alpha-(\sqrt2-1))^2)
\end{equation}
where the second equality is the expansion of $\gamma$ for $\alpha$ close to $\sqrt2-1$.
In particular $\gamma>0$ if $\alpha$ is slightly larger than $\sqrt2-1$.
In addition, $\Re(g_1(z(\phi))-g_1(\alpha))$ is also quadratic in $\phi$.
Hence
\begin{multline}\label{plocalcompute}
\left|\frac1{2\pi i}\int_{-\delta}^\delta\d\phi\,e^{n(-g_0(z(\phi),0,r)+g_0(\alpha,0,r))+n^{2/3}t(g_1(z(\phi))-g_1(\alpha))}\right|\\
=\frac1{2\pi}\int_{-\delta}^\delta\d\phi\,e^{-\gamma\phi^2n(1+\O(\phi))(1+\O(n^{-1/3}))}
\le\frac1{2\pi}\int_{-\delta}^\delta\d\phi\,e^{-\frac{\gamma\phi^2n}2}
\le\frac1{\sqrt{2\pi n\gamma}}
\end{multline}
for $n$ large enough and $\delta$ small enough.
The last inequality above follows by computing the Gaussian integral over $\R$.
Note that one can write
\begin{equation}\label{1/sqrt2pingamma}
\frac{n^{1/3}}{\sqrt{2\pi n\gamma}}=\frac1{\sqrt{2\pi}}\left\{\begin{array}{ll} n^{-1/6}r^{-1/4} & \mbox{if }r\le\varepsilon,\\ n^{-1/6}\varepsilon^{-1/4} & \mbox{if }r>\varepsilon.\end{array}\right.
\end{equation}
where $n^{-1/6}r^{-1/4}=(y+\zeta)^{-1/4}$, hence \eqref{1/sqrt2pingamma} is bounded if $y+\zeta$ is at least a positive constant.
By putting together \eqref{plocalize}, \eqref{plocalcompute} and \eqref{1/sqrt2pingamma}, one can conclude that for $n$ large enough
\begin{equation}\label{pboundatalpha}
\big|P_l^{(n)}(v+j)\big|\le 2^{-5/6} n^{1/3}Ce^{\Re(-ng_0(\alpha,0,r)+n^{2/3}tg_1(\alpha))}
\end{equation}
uniformly if $y+\zeta$ is at least a positive constant.

By Taylor expansion,
\begin{multline}\label{gz0rTaylor}
g_0(z,0,r)=\left(-\frac{2^{-5/2}}3(\sqrt2+1)^3(z-(\sqrt2-1))^3+(\sqrt2+1)r(z-(\sqrt2-1))\right)\\
\times(1+\O(z-(\sqrt2-1))).
\end{multline}
If $r\le\varepsilon$, then $\alpha-(\sqrt2-1)=r^{1/2}$ and by \eqref{pboundatalpha} and using \eqref{gTaylor} and \eqref{gz0rTaylor},
\begin{equation}\label{pbounds<eps}\begin{aligned}
\big|P_l^{(n)}(v+j)\big|&\le C e^{\left(-\frac{14+17\sqrt2}{24}nr^{3/2}-2^{-5/3}(\sqrt2+1)^2tn^{2/3}r\right)(1+\O(\sqrt\varepsilon))}\\
&=Ce^{\left(-\frac{14+17\sqrt2}{24}(y+\zeta)^{3/2}-2^{-5/3}(\sqrt2+1)^2t(y+\zeta)\right)(1+\O(\sqrt\varepsilon))}
\end{aligned}\end{equation}
where $-\frac{14+17\sqrt2}{24}=\frac13\cdot2^{-5/2}(\sqrt2+1)^3-(\sqrt2+1)$.
The first term in the exponent on the right-hand side of \eqref{pbounds<eps} dominates,
hence for any $c>0$, one can choose $C'$ large enough so that $Ce^{-c(y+\zeta)}$ upper bounds the right-hand side of \eqref{pbounds<eps} if $y+\zeta$ is at least a positive constant.

If $r>\varepsilon$, then
\begin{equation}\label{pboundintermediate}
\big|P_l^{(n)}(v+j)\big|\le C e^{\left(n\sqrt\varepsilon\left(\frac13\cdot2^{-5/2}(\sqrt2+1)^3\varepsilon-r\right)-2^{-5/3}(\sqrt2+1)^2tn^{2/3}\varepsilon\right)(1+\O(\sqrt\varepsilon))}
\end{equation}
where $\frac13\cdot2^{-5/2}(\sqrt2+1)^3\varepsilon-r\le-\frac1{10}r$ if $\varepsilon$ is not too large.
Hence the first term in the exponent on the right-hand side of \eqref{pboundintermediate} can be upper bounded by $-\frac1{10}\sqrt\varepsilon n^{1/3}(y+\zeta)$
and the second term can simply be neglected since it is negative.
That is, for given $\varepsilon>0$, $n$ can be chosen so large that
\begin{equation}\label{pbounds>eps}
\big|P_l^{(n)}(v+j)\big|\le Ce^{-\frac1{20}\sqrt\varepsilon n^{1/3}(y+\zeta)}.
\end{equation}
This finishes the proof for the case when $y+\zeta$ is bounded from below by a positive constant.
By the uniform convergence in Proposition~\ref{prop:pqconv}, \eqref{pbound} can be extended for the case when $y+\zeta$ is bounded from below by any constant.
\end{proof}

\begin{proof}[Proof of Proposition~\ref{prop:qbound}]
First we prove \eqref{qbound} for $x+\xi>0$.
In the case when $x+\xi=2^{5/6}rn^{2/3}$ is macroscopic with some $r>0$, we can write
\begin{equation}\label{qrewrite}
Q_{L}^{(n)}(u+i)=2^{-5/6}n^{1/3}\frac1{2\pi i}\oint_{\Gamma_0}\frac{\d w}w\,e^{ng_0(w,0,r)-n^{2/3}Lg_1(w)}
\end{equation}
similarly to \eqref{prewrite}.
By contour deformation, we pass by the critical point below $\sqrt2-1$, i.e.\ for a fixed $\varepsilon>0$ small enough, we choose
\begin{equation}
\wh\alpha=\left\{\begin{array}{ll} \sqrt2-1-r^{1/2} & \mbox{if }r\le\varepsilon,\\ \sqrt2-1-\varepsilon^{1/2} & \mbox{if }r>\varepsilon.\end{array}\right.
\end{equation}
Since the right-hand side of \eqref{steepwcond} is larger than $1$ for any $\rho\in(0,\sqrt2-1)$,
the function $\Re(g_0(w,0,r))$ is steep descent along the contour $w(\theta)=\wh\alpha e^{i\theta}$ for any $r>0$,
i.e.\ it attains its maximum at $\wh\alpha$ and it decreases along both arcs until the point $-\wh\alpha$.
Hence one can localize the integration contour to a small $\delta>0$ neighbourhood of $\wh\alpha$ in the same way as in \eqref{plocalize}.

By series expansion,
\begin{equation}
\Re(g_0(\wh\alpha e^{i\theta},0,r)-g_0(\wh\alpha,0,r))=-\wh\gamma\theta^2+\O(\theta^3)
\end{equation}
with
\begin{equation}
\wh\gamma=\frac{\sqrt2\wh\alpha(\wh\alpha^2-2\sqrt2\wh\alpha+1)}{4(1-\wh\alpha^2)}=-\frac{2+\sqrt2}8(\wh\alpha-(\sqrt2-1))+\O((\wh\alpha-(\sqrt2-1))^2)
\end{equation}
where the second equality above is the expansion of $\wh\gamma$ for $\wh\alpha$ close to $\sqrt2-1$.
In particular, $\wh\gamma>0$ if $\wh\alpha\in(0,\sqrt2-1)$.
Further $\Re(g_1(\wh\alpha e^{i\theta})-g_1(\wh\alpha))$ is also quadratic in $\theta$ for small $\theta$.
The rest of the proof of the bound \eqref{qbound} for $x+\xi>0$ on the rescaled $q^{(n)}$ can be done analogously to the one for $p^{(n)}$, in particular
\begin{equation}\label{qboundatalpha}
\big|Q_{L}^{(n)}(u+i)\big|\le\frac{Cn^{1/3}e^{\Re(ng_0(\wh\alpha,0,r)+n^{2/3}g_2(\wh\alpha))}}{\sqrt{2\pi n\wh\gamma}}
\end{equation}
can be shown.
The asymptotics of the first factor on the right-hand side of \eqref{qboundatalpha} is the same as \eqref{1/sqrt2pingamma}
and the exponential factor can be bounded as in \eqref{pbounds<eps}--\eqref{pbounds>eps}.
This yields the bound \eqref{qbound} for $x+\xi>0$.

Next we consider the remaining cases, i.e.\ when $x+\xi<0$.
Let $x+\xi=2^{5/6}rn^{2/3}$ again where $r<0$ for the rest of this proof.
We show
\begin{equation}\label{qboundnegative}
\big|Q_{L}^{(n)}(u+i)\big|\le C
\end{equation}
when $r\in(-1/\sqrt2,-\varepsilon)$ for some small $\varepsilon>0$ which is a stronger bound than \eqref{qbound} in this regime.
For any $r\in(-1/\sqrt2,0)$, the function $g_0(w,0,r)$ has exactly two complex conjugate critical points given by \eqref{criticalpoints}.
Let us choose the integration contours to be the unique circle of the form $w=w(\theta)=\rho e^{i\theta}$ which passes through the critical point $z_r^\pm$, i.e.\ $\rho=|z_r^\pm|$.
By Lemma~\ref{lemma:steep}, along the contour $w=\rho e^{i\theta}$,
the function $\Re(g_0(w,0,r))$ attains its maximum for $\theta\in[0,\pi]$ exactly at the critical point for the unique $\theta$ for which equality holds in \eqref{steepwcond}.

Then by \eqref{qrewrite}
\begin{equation}\label{qboundbulk}
\big|Q_{L}^{(n)}(u+i)\big|\le2^{-5/6}n^{1/3}e^{n\Re(g_0(z_r^+,0,r))}\int_0^{2\pi}\frac{\d\theta}{2\pi}\rho e^{n\Re(g_0(\rho e^{i\theta},0,r)-g_0(z_r^+,0,r))+n^{2/3}L\Re(g_1(\rho e^{i\theta}))}.
\end{equation}
By the maximum property of $\Re(g_0(w,0,r))$ along the contour $w=\rho e^{i\theta}$, the leading term in the exponent of the integrand above satisfies
\begin{equation}
\Re(g_0(\rho e^{i\theta},0,r)-g_0(z_r^+,0,r))\le0.
\end{equation}
Therefore, the integrand on the right-hand side of \eqref{qboundbulk} can grow at most as the exponential of $n^{2/3}$ times the maximum of $|L\Re(g_1)|$ along the contour.
The latter maximum is uniformly bounded except for the case when the contour passes by a singularity of $g_1$ at $\pm1$ or $0$.
In these cases the the circular contour can locally be modified to have a uniformly positive distance from the singularities and also to keep the maximum property of $\Re(g_0(w,0,r))$.
In this way, the exponent of the integrand on the right-hand side of \eqref{qboundbulk} is at least constant times $n^{2/3}$ which together with the $n^{1/3}$ prefactor
are dominated by the exponential prefactor provided that $\Re(g_0(z_r^+,0,r))<0$ for $r\in(-1/\sqrt2,0)$.
This yields boundedness in \eqref{qboundnegative} for $r\in(-1/\sqrt2,-\varepsilon)$.

To show the negativity of $\Re(g_0(z_r^+,0,r))$, we first observe that $\Re(g_0(z_0^+,0,0))=0$ and that $\frac{\d}{\d r}\Re(g_0(z_r^+,0,r))=0$.
The rest of the proof is showing that $\Re(g_0(z_r^+,0,r))$ is a concave function of $r$ in $[-1/\sqrt2,0]$.
By substituting the general formula \eqref{criticalpoints} for $z_r^+$ into \eqref{defg0}, one observes that the second derivative does not contain any logarithm and it simplifies to
\begin{equation}\label{g0second}
\frac{\d^2}{\d r^2}g_0(z_r^+,0,r)=-\frac{2i}{\sqrt{-r(\sqrt2+2r)}\left(\sqrt2+4r+4i\sqrt{-r(\sqrt2+2r)}\right)}.
\end{equation}
Since the quantities under the square root are non-negative for $r\in[-1/\sqrt2,0]$, one readily gets the real part of the two sides of \eqref{g0second} to be
\begin{equation}
\frac{\d^2}{\d r^2}\Re(g_0(z_r^+,0,r))=-\frac4{1-4\sqrt2r-8r^2}.
\end{equation}
This proves concavity because $1-4\sqrt2r-8r^2>0$ for $r\in[-1/\sqrt2,0]$, hence the bound \eqref{qboundnegative} follows for $r\in(-1/\sqrt2,-\varepsilon)$.

For $r\le-1/\sqrt2$, we choose the contour $w=\rho e^{i\theta}$ with $\rho=\sqrt2+1$ for which the function $\Re(g_0(w,0,r))$ is steep descent, since the right-hand side of \eqref{steepwcond} is equal to $1$.
After a similar bound as \eqref{qboundbulk} in the previous regime and by the same argument, it is enough to see that $\Re(g_0(\sqrt2+1,0,r))<0$ for $r\le-1/\sqrt2$.
The negativity holds for $r=-1/\sqrt2$ and by definition \eqref{defg0}, we have
\begin{equation}
g_0(\sqrt2+1,0,r)=g_0(\sqrt2+1,0,-1/\sqrt2)+2(r+1/\sqrt2)\log(\sqrt2+1),
\end{equation}
which proves the negativity of $\Re(g_0(\sqrt2+1,0,r))$ for $r\le-1/\sqrt2$ and the boundedness in \eqref{qboundnegative} in this regime.

Finally, let us consider the case when $r\in(-\varepsilon,0)$.
We show in what follows that there are $c>0$ and $C\in\R$ such that
\begin{equation}\label{intermediatebound}
\big|Q_{L}^{(n)}(u+i)\big|\le\frac C{|rn^{2/3}|^{1/4}}e^{-2nr^2+cn^{2/3}|r|}
\end{equation}
holds.
It is enough to conclude the proof of Proposition~\ref{prop:qbound} for the following reason.
If $|r|>cn^{-1/3}$, i.e.\ $x+\xi<-cn^{1/3}$ with the $c$ from \eqref{intermediatebound}, then the $-2nr^2$ term dominates in the exponent on the right-hand side of \eqref{intermediatebound},
hence the right-hand side can be bounded by a constant.
If $|r|<n^{-2/3}$, i.e.\ $x+\xi$ is of constant order, then the convergence \eqref{qconv} can be used instead to conclude that \eqref{qbound} holds.
In the intermediate case, we neglect the first term in the exponent on the right-hand side of \eqref{intermediatebound}
and the second term gives a $-c(x+\xi)$ in the exponent which together with the $C/|x+\xi|^{1/4}$ prefactor yield \eqref{qbound}.

To prove \eqref{intermediatebound}, we replace the integration contour $\gamma$ in \eqref{prewrite} by the union of the local paths
\begin{equation}
\gamma_{\rm loc}^\pm=\{z_r^\pm+e^{\pm i3\pi/4}x,x\in[-\Im(z_r^+)\sqrt2,\delta]\}
\end{equation}
for a small $\delta>0$ and the circular arc around the origin that connects the endpoints of $\gamma_{\rm loc}^\pm$.
The paths $\gamma_{\rm loc}^\pm$ intersect at $x=-\Im(z_r^+)\sqrt2$ on the real axis.
By the Taylor expansion also explained below in \eqref{g0taylor}--\eqref{g0derivatives}, the function $\Re(g_0(w,0,r))$ attains its two maxima along $\gamma_{\rm loc}^\pm$ at $z_r^\pm$.
The value of $\Re(g_0(w,0,r))$ further decreases along the circular part of the contour by Lemma~\ref{lemma:steep} for the following reason.
Let $\theta_r^+=\arg z_r^+$.
Then $\Re(g_0(|z_r^+|e^{i\theta},0,r))$ decreases for $\theta\in[\theta_r^+,\pi]$, in particular \eqref{steepwcond} holds for $\theta\in[\theta_r^+,\pi]$.
The radius of the circular part of the new contour is smaller than $|z_r^\pm|$, hence \eqref{steepwcond} and the decreasing property remain valid.

Next one localizes the integral to $\gamma_{\rm loc}^\pm$ by making an additive error of order $e^{n\Re(g_0(z_r^+,0,r))+\O(n\delta^3)}$.
To bound the integral on $\gamma_{\rm loc}^+$ (and similarly for $\gamma_{\rm loc}^-$), we use Taylor expansion around the critical point $z_r^+$
\begin{equation}\label{g0taylor}
g_0(w,0,r)=g_0(z_r^+,0,r)+\frac12g''_0(z_r^+,0,r)(w-z_r^+)^2+\frac16g'''_0(z_r^+,0,r)(w-z_r^+)^3+\O((w-z_r^+)^4)
\end{equation}
where primes mean derivatives in the first variable.
For $w\in\gamma_{\rm loc}^+$, $w-z_r^+$ has an angle $\pm e^{i3\pi/4}$, furthermore,
\begin{equation}\label{g''asymptotics}
g''_0(z_r^+,0,r)=-i2^{-1/4}(\sqrt2+1)^2\sqrt{|r|}+\O(|r|),\quad g'''_0(z_r^+,0,r)=-2^{-3/2}(\sqrt2+1)^3+\O(\sqrt{|r|})
\end{equation}
as $r\uparrow0$, hence with $w=z_r^+ + e^{i 3\pi/4}x$,
\begin{equation}\label{g0derivatives}\begin{aligned}
\Re\Big(\tfrac12g''_0(z_r^+,0,r)(w-z_r^+)^2\Big)&=-\tfrac12|g''_0(z_r^+,0,r)|x^2(1+\O(\sqrt{\varepsilon_0})),\\
\Re\Big(\tfrac16g'''_0(z_r^+,0,r)(w-z_r^+)^3\Big)&=-\tfrac1{6\sqrt2}|g'''_0(z_r^+,0,r)|x^3(1+\O(\sqrt{\varepsilon_0})).
\end{aligned}\end{equation}

The contribution that comes from the integral over $\gamma_{\rm loc}^+$ is bounded by
\begin{multline}\label{gammaloc+bound}
\bigg|\int_{\gamma_{\rm loc}^+}\frac{\d w}w\,e^{ng_0(w,0,r)+n^{2/3}Lg_1(w)}\bigg|\le Ce^{\Re(ng_0(z_r^+,0,r))+n^{2/3}L\sup_{w\in\gamma_{\rm loc}}\Re(g_1(w))}\\
\times\int_{-\Im(z_r^+)\sqrt2}^\delta\d x\,e^{-\frac n2|g''_0(z_r^+,0,r)|x^2-\frac n{6\sqrt2}|g'''_0(z_r^+,0,r)|x^3+\O(nx^4)}.
\end{multline}

For $-\Im(z_r^+)\sqrt2\le x\le0$, after comparing the numerical values of the derivatives, one can dominate the cubic term by the quadratic one
\begin{equation}\label{quadratictermbound}
-\frac n2|g''_0(z_r^+,0,r)|x^2-\frac n{6\sqrt2}|g'''_0(z_r^+,0,r)|x^3\le-\frac n4|g''_0(z_r^+,0,r)|x^2
\end{equation}
if $n$ is large enough.
By replacing the factor $n/4$ by $n/6$, the quartic error term $\O(nx^4)$ can also be dominated if $\varepsilon_0$ is small enough.
On the other hand, for $0\le x\le\delta$, the cubic term is negative and it dominates the error term, i.e.\ for $\delta$ small enough,
\begin{equation}\label{cubictermbound}
-\frac n{6\sqrt2}|g'''_0(z_r^+,0,r)|x^3+\O(nx^4)\le-\frac n{12\sqrt2}|g'''_0(z_r^+,0,r)|x^3\le0
\end{equation}
holds.

By combining the previous bounds, the integral on the right-hand side of \eqref{gammaloc+bound} can be bounded by
\begin{multline}\label{gammalocintegral}
\int_{-\Im(z_r^+)\sqrt2}^\delta\d x\,e^{-\frac n2|g''_0(z_r^+,0,r)|x^2-\frac n{6\sqrt2}|g'''_0(z_r^+,0,r)|x^3+\O(nx^4)}\\
\le\int_{-\Im(z_r^+)\sqrt2}^\delta\d x\,e^{-\frac n6|g''_0(z_r^+,0,r)|x^2}
\le\sqrt{\frac{6\pi}{n|g_0''(z_r^+,0,r)|}}
=\frac C{|r|^{1/4}\sqrt n}
\end{multline}
where we extended the integral to $\R$ and computed the Gaussian integral in the second inequality, where we used the asymptotics \eqref{g''asymptotics} as well.
Since
\begin{equation}\label{g1asymptotics}
\Re(g_1(z_r^+))=-2^{5/6}r+\O(r^2),\qquad\Re(g_1'(z_r^+))=-(90+58\sqrt2)^{1/3}r+\O(r^2),
\end{equation}
also the supremum $\sup_{w\in\gamma_{\rm loc}}\Re(g_1(w))=\O(|r|)$.
On the other hand,
\begin{equation}\label{g0asymptotics}
\Re(g_0(z_r^+,0,r))=-2r^2+\O(|r|^3).
\end{equation}
Putting together \eqref{gammaloc+bound} and \eqref{gammalocintegral} with \eqref{g1asymptotics} and \eqref{g0asymptotics}, the bound \eqref{intermediatebound} follows.
\end{proof}

\begin{proof}[Proof of Proposition~\ref{prop:improvedbound}]
Suppose that the parameters $t=2^{7/6}sn^{1/3}$ and $y+\zeta=2^{5/6}rn^{2/3}$ are macroscopic where $s,r>0$.
With this setting of parameters, one has the representation
\begin{equation}\label{pimprovedrepr}
P_l^{(n)}(v+j)=2^{-5/6}n^{1/3}\frac{-1}{2\pi i}\oint_{\Gamma_1}\frac{\d z}z\,e^{-ng_0(z,s,r)}
\end{equation}
which can be checked by comparing \eqref{pqrewrite} with \eqref{defg0}--\eqref{defg2}.
Note that if $s\ge\frac12-\frac1{2\sqrt2}$, then the integrand on the right-hand side of \eqref{pimprovedrepr} has no singularity at $1$ and hence inside $\Gamma_1$ and then the whole integral is zero.
Therefore it is enough to consider $s\in(0,\frac12-\frac1{2\sqrt2})$ in the rest of the proof.

Let us deform the integration contour $\Gamma_1$ in \eqref{pimprovedrepr} first.
For $r=0$, the function $g_0(z,s,0)$ has critical points at $z_1=\sqrt2-1$ and at $z_2=\frac{\sqrt2+4s+4\sqrt2s}{2+\sqrt2-4s}$.
For $s\in(0,\frac12-\frac1{2\sqrt2})$, one has $z_2\in(\sqrt2-1,1)$ and let the integration contour be the circle around $1$ which passes through $z_2$, i.e.\ with radius $R=1-z_2$.
Then one can write $-\Re(g_0(1-Re^{-i\phi},s,0))$ analogously to \eqref{g0oncircle} and by taking its derivative one arrives to
\begin{equation}\label{Reg0derivatives}
-\frac{\d}{\d\phi}\Re(g_0(1-Re^{-i\phi},s,0))=-\frac{R\wt Q\sin\phi}{4|z|^2|1+z|^2}
\end{equation}
where $z=1-Re^{-i\phi}$ and $\wt Q=6\sqrt2-4+(3\sqrt2+2)R^2-8s+4R^2s-8\sqrt2R\cos\phi$.
Since the function $\phi\mapsto\wt Q$ is increasing, it is enough to show that it is positive for $\phi=0$
in order to verify the steep descent property of the function $-\Re(g_0(z,s,0))$ along the contour $1-Re^{-i\phi}$ as in Lemma~\ref{lemma:steep}.
The quantity $\wt Q$ with $\phi=0$ and with $R=1-z_2=1-\frac{\sqrt2+4s+4\sqrt2s}{2+\sqrt2-4s}$ is equal to
\begin{equation}
\wt Q\bigg(\phi=0,R=1-\frac{\sqrt2+4s+4\sqrt2s}{2+\sqrt2-4s}\bigg)=\frac{32(\sqrt2+1)s(1+8s+8s^2)}{(2+\sqrt2-4s)^2}
\end{equation}
which is positive for $s\in(0,\frac12-\frac1{2\sqrt2})$.
For general $r>0$, let us write
\begin{equation}
g_0(z,s,r)=g_0(z,s,0)+r\log((\sqrt2+1)z)
\end{equation}
and observe that $-\Re(r\log((\sqrt2+1)z))$ is steep descent for the contour $1-Re^{-i\phi}$, hence also $-\Re(g_0(z,s,r))$ is steep descent along the same contour.

By localizing the integral \eqref{pimprovedrepr}, we can write
\begin{multline}\label{pimprovedlocalize}
\big|P_l^{(n)}(v+j)\big|\\=2^{-5/6}n^{1/3}e^{\Re(-ng_0(1-R,s,r))}\bigg(\bigg|\frac1{2\pi i}\int_{-\delta}^\delta\d\phi\,Re^{n(-g_0(1-Re^{-i\phi},s,r)+g_0(1-R,s,r))}\bigg|+\O(e^{-\wt cn})\bigg).
\end{multline}
By the fact that the difference of $-\Re(g_0(z,s,r))+\Re(g_0(z,s,0))$ was previously shown to be steep descent along the integration contour and by Taylor expansion,
\begin{equation}\begin{aligned}
\Re(-g_0(1-Re^{-i\phi},s,r)+g_0(1-R,s,r))&\le\Re(-g_0(1-Re^{-i\phi},s,0)+g_0(1-R,s,0))\\
&=-\wt\gamma\phi^2+\O(\phi^4)
\end{aligned}\end{equation}
where $\wt\gamma=\frac1{2\sqrt2}\frac{s(1-8s+8s^2)}{1+8s+8s^2}$.
Then the integral in absolute value on the right-hand side of \eqref{pimprovedlocalize} can be bounded in the same way as in \eqref{plocalcompute} by $C/\sqrt{n\wt\gamma}$.
This bound is the largest when $s$ is small in which case together with the $n^{1/3}$ prefactor it is of order $1/\sqrt{n^{1/3}s}=C/\sqrt t$.

If $s<\varepsilon$, then Taylor expansion yields
\begin{equation}\begin{aligned}
-n\Re(g_0(1-R,s,0))&=-n\frac{32\sqrt2}3s^3(1+\O(\varepsilon))=-\frac43t^3(1+\O(\varepsilon)),\\
-n\Re(r\log((\sqrt2+1)(1-R)))&=-8nrs(1+\O(\varepsilon))=-2(y+\zeta)t(1+\O(\varepsilon))
\end{aligned}\end{equation}
from which it follows
\begin{equation}\label{pfors<eps}
\big|P_l^{(n)}(v+j)\big|\le\frac C{\sqrt t}e^{-\frac43t^3-2(y+\zeta)t}.
\end{equation}

For $s\in(\varepsilon,\frac12-\frac1{2\sqrt2})$, there is a $\delta=\delta(\varepsilon)>0$ such that
\begin{equation}\label{concaveineqs}
-\Re(g_0(1-R,s,0))\le-2^{7/6}\delta s,\qquad-\Re(r\log((\sqrt2+1)(1-R)))\le-\delta rs.
\end{equation}
To prove the first inequality in \eqref{concaveineqs}, first remark that $g_0(1-R(s=0),0,0)=0$, $\frac{\d}{\d s}g_0(1-R(s),s,0)|_{s=0}=0$, and
\begin{equation}
\frac{\d^2}{\d s^2}g_0(1-R(s),s,0)=\frac{128}{\sqrt2}\frac s{1-48s^2+64s^4}
\end{equation}
which is positive for $s\in(0,\frac12-\frac1{2\sqrt2})$, hence $\Re(g_0(1-R,s,0))$ is a convex function of $s$.
For the second bound in \eqref{concaveineqs}, it is enough to take the first derivative.
Then using \eqref{pimprovedlocalize} and \eqref{concaveineqs}, one gets a bound
\begin{equation}\label{pfors>eps}
\big|P_l^{(n)}(v+j)\big|\le Ce^{-n2^{7/6}\delta s-n\delta rs}=Ce^{-\delta n^{2/3}t-4\delta(y+\zeta)t}.
\end{equation}

To finish the proof of \eqref{improvedbound}, let $K$ be a large fixed constant.
If $t\le K$, then let us use Proposition~\ref{prop:pbound} to see that the integral on the left-hand side of \eqref{improvedbound} is at most $Ce^{-c(y+\zeta)}=C'e^{-c(K+y+\zeta)}\le C'e^{-c(t+y+\zeta)}$.
If $t>K$, then both \eqref{pfors<eps} and \eqref{pfors>eps} give a bound $Ce^{-ct-c(y+\zeta)t}\le Ce^{-ct-cK(y+\zeta)}$ proving \eqref{improvedbound}.
\end{proof}

\begin{proof}[Proof of Proposition~\ref{prop:Hconv}]
Suppose first that the function $g$ is continuous on $[L,M]$ with square integrable derivative.
In this case, the hitting position is a function of the hitting time and $g$ by the continuity hence it is enough to prove the convergence of the hitting times.

It follows from Donsker's invariance principle that the rescaled random walk trajectories $(2^{5/6}n^{-1/3}S_{b_n(t)/2})_{t\in[L,M]}$
converge weakly on the space of continuous functions on $[L,M]$ with the uniform topology to the trajectory of the Brownian motion $(b(t))_{t\in[L,M]}$ with diffusion coefficient $2$.
By the Portmanteau theorem the weak convergence implies that for any $s\in[L,M]$
\begin{equation}\label{portmanteau}
\P\left(2^{5/6}n^{-1/3}S_{\frac{b_n(t)}2}\le g(t)-t^2\mbox{ for }t\in[L,s]\right)\to\P\left(b(t)\le g(t)-t^2\mbox{ for }t\in[L,s]\right)
\end{equation}
as $n\to\infty$ provided that the event $E_s=\{b(t)\le g(t)-t^2\mbox{ for }t\in[L,s]\}$ on the right-hand side of \eqref{portmanteau} is a continuity set for the Brownian motion measure,
i.e.\ for its boundary $\P(\partial E_s)=0$.
If it is the case, then the weak convergence of hitting times \eqref{Hconv} follows because \eqref{portmanteau} is equivalent to the convergence of the tail probabilities
\begin{equation}
\P\Big(2^{7/6}n^{-2/3}\wh T^{u,\frac{b_n(L)}2}_+>s\Big)\to\P\Big(T^{\xi,L}_+>s\Big).
\end{equation}

What remains to prove is that $\P(\partial E_s)=0$ for any $s\in[L,M]$.
Since the derivative of $g(t)-t^2$ is square integrable on $[L,M]$, it satisfies Novikov's condition and
by the Cameron-Martin theorem $b(t)-g(t)+t^2$ is a Brownian motion on $t\in[L,M]$ under an equivalent measure, hence $\P(\sup_{t\in[L,s]}(b(t)-g(t)+t^2)=0)=0$.

Suppose that $g(t)-t^2$ has a jump downwards at $s_1$.
To prove the joint convergence in \eqref{Hconv}, we have to see that the boundary of the event $E_{s_1}\cap\{b(s_1)\in I\}$ for any interval $I$ has $0$ measure under the law of the Brownian motion.
The boundary under the topology induced by the uniform distance is a subset of the union of $\partial E_{s_1}$ and the event that $b(s_1)$ is equal to one of the endpoints of $I$ which both have $0$ measure.
If $g(t)-t^2$ has finitely many jumps and finitely many intervals where its derivative is square integrable, then the combination of the above arguments and induction leads to the proof.
\end{proof}

\begin{proof}[Proof of Lemma~\ref{lemma:intkerneldecay}]
In order to prove \eqref{kdecay}, we use Propositions~\ref{prop:qbound} and~\ref{prop:improvedbound} to bound the $q$ and $p$ factors in \eqref{defk} respectively.
We separate two regimes where the hitting time probability is bounded differently.
Let $\ul g=\min_{\tau\in[L,M]}(g(\tau)-\tau^2)$.
Note that the hitting position is lower bounded by $\frac n{\sqrt2}+2^{-5/6}\ul gn^{1/3}$, hence after rescaling, $\zeta\ge\ul g$ holds.
In the first regime where the starting position $u$ of the random walk is at least $\frac n{\sqrt2}+2^{-5/6}\ul gn^{1/3}$ which corresponds to $\xi\ge\ul g$, we simply use that
\begin{equation}\label{hittingprobbound}
\sum_{l=\frac{b_n(T)}2}^{\frac{b_n(T+1)}2}\sum_{v\in\Z}\P\Big(\wh T^{u,\frac{b_n(L)}2}_+=l,\wh X^{u,m}_+=v\Big)\le1.
\end{equation}
Then by using \eqref{qbound}, \eqref{improvedbound} with $e^{-c\zeta}\le e^{-c\ul g}$ on the right-hand side replaced by a constant and \eqref{hittingprobbound}, we get
\begin{equation}\label{highstart}\begin{aligned}
&\sum_{u\ge\frac n{\sqrt2}+2^{-5/6}\ul gn^{1/3}}\sum_{l=\frac{b_n(T)}2}^{\frac{b_n(T+1)}2}\sum_{v\in\Z}\Big|2^{-5/6}n^{1/3}\P\Big(\wh T^{u,\frac{b_n(L)}2}_+=l,\wh X^{u,m}_+=v\Big)k_n^{u,l,v}(i,j)\Big|\\
&\qquad\le\sum_{u\ge\frac n{\sqrt2}+2^{-5/6}\ul gn^{1/3}}C^2n^{-1/3}e^{-c(x+\xi+T+y)}\sum_{l=\frac{b_n(T)}2}^{\frac{b_n(T+1)}2}\sum_{v\in\Z}\P\Big(\wh T^{u,\frac{b_n(L)}2}_+=l,\wh X^{u,m}_+=v\Big)\\
&\qquad=C'\int_{\xi\ge\ul g}\d\xi\,e^{-c(x+\xi+T+y)}=\frac{C'e^{-c\ul g}}ce^{-c(x+T+y)}.
\end{aligned}\end{equation}

The second regime is where the random walk starts below $\frac n{\sqrt2}+2^{-5/6}\ul gn^{1/3}$, i.e.\ $\xi<\ul g$.
For these values we apply the large deviation bound of Proposition~\ref{prop:ldp} as follows.
Let us decompose the probability that the hitting happens between $b_n(T)/2$ and $b_n(T+1)/2$ according to the value of the random walk at $b_n(T)$ by writing
\begin{equation}\label{hittingsumdecomp}\begin{aligned}
&\sum_{l=\frac{b_n(T)}2}^{\frac{b_n(T+1)}2}\sum_{v\in\Z}\P\Big(\wh T^{u,\frac{b_n(L)}2}_+=l,\wh X^{u,m}_+=v\Big)\\
&\quad\le\P\Big(S_{2^{-7/6}(T-L)n^{2/3}}\ge\frac n{\sqrt2}+2^{-5/6}\ul gn^{1/3}-u\Big)\\
&\qquad+\int_\xi^{\ul g}\d\eta\,\P\Big(S_{2^{-7/6}(T-L)n^{2/3}}\ge\frac n{\sqrt2}+2^{-5/6}\eta n^{1/3}-u\Big)\\
&\qquad\quad\times\P\bigg(\sup_{0\le k\le2^{-7/6}n^{2/3}}S_k>\frac n{\sqrt2}+2^{-5/6}(\ul g-\eta)n^{1/3}\bigg)\\
&\qquad+\P\left(S_{2^{-7/6}(T-L)n^{2/3}}<0\right)\P\bigg(\sup_{0\le k\le2^{-7/6}n^{2/3}}S_k>\frac n{\sqrt2}+2^{-5/6}\ul gn^{1/3}-u\bigg).
\end{aligned}\end{equation}
We bound the first term on the right-hand side of \eqref{hittingsumdecomp} using Proposition~\ref{prop:ldp} with $m=2^{-7/6}(T-L)n^{2/3}$ and $x=2^{1/3}\frac{\ul g-\xi}{T-L}n^{-1/3}$ as
\begin{equation}\label{ldpinuse}\begin{aligned}
\P\Big(S_{2^{-7/6}(T-L)n^{2/3}}\ge\frac n{\sqrt2}+2^{-5/6}\ul gn^{1/3}-u\Big)
&\leq e^{-\varepsilon2^{-7/6}(T-L)n^{2/3}\left(2^{1/3}\frac{\ul g-\xi}{T-L}n^{-1/3}\right)^2}\\
&=e^{-\varepsilon2^{-1/2}\frac{(\ul g-\xi)^2}{T-L}}.
\end{aligned}\end{equation}
Very similarly by Proposition~\ref{prop:ldp} the integral on the right-hand side of \eqref{hittingsumdecomp} is upper bounded by
\begin{equation}\label{ldpforintegral}\begin{aligned}
&\int_\xi^{\ul g}\d\eta\,\P\Big(S_{2^{-7/6}(T-L)n^{2/3}}\ge\frac n{\sqrt2}+2^{-5/6}\eta n^{1/3}-u\Big)\\
&\quad\times\P\bigg(\sup_{0\le k\le2^{-7/6}n^{2/3}}S_k>\frac n{\sqrt2}+2^{-5/6}(\ul g-\eta)n^{1/3}\bigg)\\
&\qquad\le\int_\xi^{\ul g}\d\eta e^{-\varepsilon2^{-1/2}\frac{(\eta-\xi)^2}{T-L}-\varepsilon2^{-1/2}(\ul g-\eta)^2}
\le\frac{\sqrt\pi}{\sqrt{\varepsilon2^{-1/2}(1+\frac1{T-L})}}e^{-\varepsilon2^{-1/2}\frac{(\ul g-\xi)^2}{1+T-L}},
\end{aligned}\end{equation}
where the last inequality follows by computing the Gaussian integral in $\eta$ over $\R$.
Note that the prefactor in front of the exponential on the right-hand side of \eqref{ldpforintegral} is bounded by a constant for any $T>L$.
Further, the last term on the right-hand side of \eqref{hittingsumdecomp} is at most $1\cdot e^{-\varepsilon2^{-1/2}(\ul g-\xi)^2}$.
As a conclusion, by comparing the right-hand sides of \eqref{ldpinuse} and \eqref{ldpforintegral},
the sum of hitting probabilities on the left-hand side of \eqref{hittingsumdecomp} is at most $Ce^{-\varepsilon'(\ul g-\xi)^2/(1+T-L)}$ with some $C\in\R$ and $\varepsilon'>0$.

Hence in the $\xi<\ul g$ regime,
\begin{multline}\label{lowstart}
\sum_{u<\frac n{\sqrt2}+2^{-5/6}\ul gn^{1/3}}\sum_{l=\frac{b_n(T)}2}^{\frac{b_n(T+1)}2}\sum_{v\in\Z}\Big|2^{-5/6}n^{1/3}\P\Big(\wh T^{u,\frac{b_n(L)}2}_+=l,\wh X^{u,m}_+=v\Big)k_n^{u,l,v}(i,j)\Big|\\
\le C\int_{\xi<\ul g}\d\xi\,e^{-\varepsilon'\frac{(\ul g-\xi)^2}{1+T-L}-c(x+\xi+T+y)}.
\end{multline}
The $\xi$ dependent part of the integral above can be upper bounded by the integral over $\R$ which is a Gaussian integral
\begin{equation}\label{lowstartGauss}
\int_\R\d\xi e^{-\varepsilon'\frac{(\ul g-\xi)^2}{1+T-L}-c\xi}=\sqrt{\frac{\pi(1+T-L)}{\varepsilon'}}e^{-c\ul g+\frac{c^2(1+T-L)}{4\varepsilon'}}.
\end{equation}
Since the exponent of $t$ in \eqref{improvedbound} is arbitrary, the part of the sum in \eqref{lowstart} can still be bounded by $Ce^{-c(x+y+T)}$ and \eqref{kdecay} follows.

The proof of \eqref{kappadecay} is similar, hence we omit the fine details.
If $T>L$ and $\zeta\ge\ul g$, then there are $c>0$ and $C\in\R$ so that
\begin{equation}
\big|\Ai^{(-t)}(\zeta+y)\big|\le Ce^{-c(y+T)},\qquad\big|\Ai^{(L)}(x+\xi)\big|\le Ce^{-c(x+\xi)}
\end{equation}
for $t\in[T,T+1]$, $x,y\ge0$ and $\xi\in\R$.
Therefore, one can bound
\begin{multline}\label{hittingAi}
\int_\R\d\xi\int_T^{T+1}\int_\R\P(T^{\xi,L}_+\in\d t,X^{\xi,L}_+\in\d\zeta)\left|\Ai^{(L)}(x+\xi)\Ai^{(-t)}(\zeta+y)\right|\\
\le C^2\int_\R\d\xi e^{-c(x+\xi+T+y)}\int_T^{T+1}\int_\R\P(T^{\xi,L}_+\in\d t,X^{\xi,L}_+\in\d\zeta).
\end{multline}
The last double integral in $t$ and $\zeta$ is equal to the probability $\P(T^{\xi,L}_+\in[T,T+1])$.
Then the same steps apply as in the proof of \eqref{kdecay}: one separates the two regimes $\xi\ge\ul g$ and $\xi<\ul g$.
In the first regime, one bounds the probability on the right-hand side of \eqref{hittingAi} by $1$ and using a large deviation bound analogous to Proposition~\ref{prop:ldp} in the second.
For the latter bound, we observe that by the reflection principle,
\begin{equation}
\P\Big(\sup_{0\le s\le t}B(s)\ge xt\Big)=2\P(B(t)\ge xt)=2\big(1-\Phi(x\sqrt t)\big)
\end{equation}
which can be bounded by $e^{-tx^2/2}$ and \eqref{kappadecay} can be proved in the same way as \eqref{kdecay}.
\end{proof}


\begin{thebibliography}{10}

\bibitem{AR05}
D.~Allison and N.~Reshetikhin.
\newblock Numerical study of the 6-vertex model with domain wall boundary conditions.
\newblock {\em Ann. Inst. Fourier}, 55:1847--1869, 2005.

\bibitem{BF05}
P.~Bleher and V.~Fokin.
\newblock Exact solution of the six-vertex model with domain wall boundary condition, disordered phase.
\newblock {\em Comm. Math. Phys.}, 268:223--284, 2006.

\bibitem{Bor10}
A.~Borodin.
\newblock Determinantal point processes.
\newblock In G.~Akemann, J.~Baik, and P.~Di Francesco, editors, {\em The Oxford Handbook of Random Matrix Theory, Chapter 11}. Oxford University Press, USA, 2010.

\bibitem{BCR13}
A.~Borodin, I.~Corwin, and D.~Remenik.
\newblock {Multiplicative functionals on ensembles of non-intersecting paths}.
\newblock {\em Ann. Inst. H. Poincar\'e Probab. Statist.}, 51:28--58, 2015.

\bibitem{BF15}
A.~Borodin and P.L. Ferrari.
\newblock {Random tilings and Markov chains for interlacing particles}.
\newblock {\em Markov Processes Relat. Fields}, 24(3), 2018.

\bibitem{CFS16}
S.~Chhita, P.L. Ferrari, and H.~Spohn.
\newblock {Limit distributions for KPZ growth models with spatially homogeneous random initial conditions}.
\newblock {\em Ann. Appl. Probab}, 28:1573--1603, 2018.

\bibitem{CJY15}
S.~Chhita, K.~Johansson, and B.~Young.
\newblock {Asymptotic domino statistics in the {Aztec} diamond}.
\newblock {\em Ann. Appl. Probab.}, 25:1232--1278, 2015.

\bibitem{CEP96}
H.~Cohn, N.~Elkies, and J.~Propp.
\newblock Local statistics for random domino tilings of the {Aztec} diamond.
\newblock {\em Duke Math. J.}, 85:117--166, 1996.

\bibitem{CKP01}
H.~Cohn, R.~Kenyon, and J.~Propp.
\newblock A variational principle for domino tilings.
\newblock {\em J. Amer. Math. Soc.}, 14:297--346, 2001.

\bibitem{CPS16b}
F.~Colomo and A.~Sportiello.
\newblock {Arctic curves of the six-vertex model on generic domains: the Tangent Method}.
\newblock {\em J. Stat. Phys.}, 164:1488, 2016.

\bibitem{CLW16}
I.~Corwin, Z.~Liu, and D.~Wang.
\newblock {Fluctuations of TASEP and LPP with general initial data}.
\newblock {\em Ann. Appl. Probab.}, 26:2030--2082, 2016.

\bibitem{CQR11}
I.~Corwin, J.~Quastel, and D.~Remenik.
\newblock {Continuum statistics of the Airy$_2$ process}.
\newblock {\em Comm. Math. Phys.}, 317:347--362, 2013.

\bibitem{D13}
S.~Delvaux.
\newblock {Non-Intersecting Squared Bessel Paths at a Hard-Edge Tacnode}.
\newblock {\em Comm. Math. Phys.}, 324:715--766, 2013.

\bibitem{DV14}
S.~Delvaux and B.~Vet\H{o}.
\newblock {The hard edge tacnode process and the hard edge Pearcey process with non-intersecting squared Bessel paths}.
\newblock {\em Random Matrices Theory Appl.}, 4:1550008, 2015.

\bibitem{EKLP92}
N.~Elkies, G.~Kuperbert, M.~Larsen, and J.~Propp.
\newblock {Alternating-Sign Matrices and Domino Tilings I and II}.
\newblock {\em J. Algebr. Comb.}, 1:111--132, 1992.

\bibitem{FS05b}
P.L. Ferrari and H.~Spohn.
\newblock {A determinantal formula for the GOE Tracy-Widom distribution}.
\newblock {\em J. Phys. A}, 38:L557--L561, 2005.

\bibitem{FS06}
P.L. Ferrari and H.~Spohn.
\newblock Domino tilings and the six-vertex model at its free fermion point.
\newblock {\em J. Phys. A: Math. Gen.}, 39:10297--10306, 2006.

\bibitem{FV16}
P.L. Ferrari and B.~Vet\H{o}.
\newblock {The hard-edge tacnode process for Brownian motion}.
\newblock {\em Electron. J. Probab.}, 22(79):1--32, 2017.

\bibitem{FV11}
P.L. Ferrari and B.~Vet{\H o}.
\newblock {Non-colliding Brownian bridges and the asymmetric tacnode process}.
\newblock {\em Electron. J. Probab.}, 44:1--17, 2012.

\bibitem{Hel00}
H.~Helfgott.
\newblock Edge effects on local statistics in lattice dimers: A study of the {A}ztec diamond (finite case).
\newblock {\em Available at arXiv:math/0007136}, 2000.

\bibitem{JPS98}
W.~Jockush, J.~Propp, and P.~Shor.
\newblock Random domino tilings and the arctic circle theorem.
\newblock {\em Available at arXiv:math.CO/9801068}, 1998.

\bibitem{Jo01}
K.~Johansson.
\newblock Discrete orthogonal polynomial ensembles and the {P}lancherel measure.
\newblock {\em Ann. of Math.}, 153:259--296, 2001.

\bibitem{Jo02b}
K.~Johansson.
\newblock Non-intersecting paths, random tilings and random matrices.
\newblock {\em Probab. Theory Related Fields}, 123:225--280, 2002.

\bibitem{Jo03b}
K.~Johansson.
\newblock Discrete polynuclear growth and determinantal processes.
\newblock {\em Comm. Math. Phys.}, 242:277--329, 2003.

\bibitem{Jo03}
K.~Johansson.
\newblock The arctic circle boundary and the {Airy} process.
\newblock {\em Ann. Probab.}, 33:1--30, 2005.

\bibitem{KM59}
S.~Karlin and L.~McGregor.
\newblock Coincidence probabilities.
\newblock {\em Pacific J.}, 9:1141--1164, 1959.

\bibitem{Kas63}
P.W. Kasteleyn.
\newblock Dimer statistics and phase transitions.
\newblock {\em J. Math. Phys.}, 4:287--293, 1963.

\bibitem{Ken97}
R.~Kenyon.
\newblock Local statistics of lattice dimers.
\newblock {\em Ann. Inst. H. Poincar\'e, Probab. Statist.}, 33:591--618, 1997.

\bibitem{Ken01}
R.~Kenyon.
\newblock Dominos and {G}aussian free field.
\newblock {\em Ann. Probab.}, 29:1128--1137, 2001.

\bibitem{KOS03}
R.~Kenyon, A.~Okounkov, and S.~Sheffield.
\newblock Dimers and amoebae.
\newblock {\em Ann. of Math.}, 163:1019--1056, 2006.

\bibitem{LW17}
K.~Liechty and D.~Wang.
\newblock {Nonintersecting Brownian bridges between reflecting or absorbing walls}.
\newblock {\em Adv. Math.}, 309:155--208, 2017.

\bibitem{MQR17}
K.~Matetski, J.~Quastel, and D.~Remenik.
\newblock {The KPZ fixed point}.
\newblock {\em preprint: arXiv:1701.00018}, 2017.

\bibitem{Nor08}
E.~Nordenstam.
\newblock {On the Shuffling Algorithm for Domino Tilings}.
\newblock {\em Electron.~J.~Probab.}, 15:75--95, 2010.

\bibitem{Ok01}
A.~Okounkov.
\newblock Infinite wedge and random partitions.
\newblock {\em Selecta Math.}, 7:57--81, 2001.

\bibitem{PS02}
M.~Pr{\"a}hofer and H.~Spohn.
\newblock Scale invariance of the {PNG} droplet and the {A}iry process.
\newblock {\em J. Stat. Phys.}, 108:1071--1106, 2002.

\bibitem{Pro03}
J.~Propp.
\newblock {Generalized Domino-Shuffling}.
\newblock {\em Theoret. Comput. Sci.}, 303:267--301, 2003.

\bibitem{QR19}
J.~Quastel and D.~Remenik.
\newblock {How flat is flat in random interface growth?}
\newblock {\em Trans. Amer. Math. Soc.}, 371:6047--6085, 2019.

\bibitem{ZJ00}
P.~Zinn-Justin.
\newblock Six-vertex model with domain wall boundary conditions and one-matrix models.
\newblock {\em Phys. Rev. E}, 62:3411--3418, 2000.

\end{thebibliography}
\end{document}